\definecolor{refgreen}{rgb}{0,0.5,0}
\journalname{...}
\date{ \phantom{b} \vspace{45mm}\phantom{e}}
\newcommand{\red}[1]{\textcolor[rgb]{1.00,0.00,0.00}{#1}}
\def\half{\frac{1}{2}}
\newcommand{\bR}{{\mathbb R}}
\newcommand{\bS}{{\mathbb S}}
\newcommand{\diag}{\operatorname{diag}}
\newcommand\bfd{{\mathbf d}}
\newcommand\bfe{{\mathbf e}}
\newcommand\bff{{\mathbf f}}
\newcommand\bfn{{\mathbf n}}
\newcommand\bfu{{\mathbf u}}
\newcommand\bfv{{\mathbf v}}
\newcommand\bfw{{\mathbf w}}
\newcommand\bfx{{\mathbf x}}
\newcommand\bfy{{\mathbf y}}
\newcommand\bfz{{\mathbf z}}
\newcommand\bfA{{\mathbf A}}
\newcommand\bfH{{\mathbf H}}
\newcommand\bfK{{\mathbf K}}
\newcommand\bfM{{\mathbf M}}
\newcommand\bfV{{\mathbf V}}
\newcommand\calE{{\mathcal E}}
\newcommand\calO{{\mathcal O}}
\newcommand\andquad{\quad\hbox{ and }\quad}
\renewcommand{\d}{\textnormal{d}}
\newcommand{\D}{D}
\newcommand{\ga}{\gamma}
\newcommand{\Ga}{\varGamma}
\newcommand{\laplace}{\Delta}
\newcommand{\nbg}{\nabla_{\varGamma}}
\newcommand{\nbgh}{\nabla_{\varGamma_h}}
\newcommand{\mat}{\partial^{\bullet}}
\newcommand{\diff}{\frac{\d}{\d t}}
\newcommand{\eps}{\varepsilon}
\newcommand{\nb}{\nabla}
\newcommand{\pa}{\partial}
\newcommand{\R}{\mathbb{R}}
\newcommand{\spn}{\textnormal{span}}
\def \t {(t)}
\def \to {\rightarrow}
\newcommand{\tr}{\textnormal{tr}}
\newcommand{\vphi}{\varphi}
\newcommand{\Ih}{\widetilde{I}_h}
\newcommand{\us}{\bfu^\ast}
\newcommand{\vs}{\bfv^\ast}
\newcommand{\xs}{\bfx^\ast}
\newcommand{\dotus}{\dot\bfu^\ast}
\newcommand{\dotxs}{\dot\bfx^\ast}
\newcommand{\xls}{\bfx_\ast}
\newcommand{\eu}{\bfe_\bfu}
\newcommand{\ev}{\bfe_\bfv}
\newcommand{\ex}{\bfe_\bfx}
\newcommand{\doteu}{\dot\bfe_\bfu}
\newcommand{\dotex}{\dot\bfe_\bfx}
\newcommand{\du}{\bfd_\bfu}
\newcommand{\dv}{\bfd_\bfv}
\newcommand{\dof}{N}
\newcommand{\rewriteoff}{\color{black}}
\newcommand{\bffb}{{\mathbf f}}
\begin{document}

\title{Error estimates for a surface finite element method for anisotropic mean curvature flow}

\titlerunning{Error estimates for anisotropic mean curvature flows}        

\author{Klaus Deckelnick \and Harald Garcke \and Bal\'azs Kov\'acs}

\authorrunning{K.~Deckelnick, H.~Garcke, and B.~Kov\'acs} 

\institute{
	Klaus Deckelnick \at
	Institut f\"ur Analysis und Numerik, Otto-von-Guericke-Universit\"at Magdeburg, 
	Universitätsplatz 2, 39106 Magdeburg, Germany; 
	\email{klaus.deckelnick@ovgu.de} \\
	H.~Garcke\at
	Faculty of Mathematics, University of Regensburg, 
	Universit{\"a}tsstr. 31, 93040 Regensburg, Germany; 
	\email{harald.garcke@ur.de}\\
	B.~Kov\'{a}cs \at
	Institute of Mathematics, Paderborn University,
	Warburgerstr.~100., 31098 Paderborn, Germany; 
	\email{balazs.kovacs@math.uni-paderborn.de}\\
}

\date{\today}

\maketitle

\begin{abstract}
	Error estimates are proved for an evolving surface finite element semi-discretization for anisotropic mean curvature flow of closed surfaces.
	For  the geometric surface flow, a system coupling the anisotropic evolution law to parabolic evolution equations for the surface normal and normal velocity is derived, which then serve as the basis for the proposed numerical method.
	The algorithm for anisotropic mean curvature flow
	is proved to be convergent in the $H^1$-norm with optimal-order for finite elements of degree at least two. 
	Numerical experiments are presented to illustrate and complement our theoretical results.
\end{abstract}


\section{Introduction}

In this paper we propose an evolving surface finite element semi-discretization of anisotropic mean curvature flow
for closed surfaces $\Ga$ (of dimension at most three). We prove optimal-order $H^1$-norm error estimates  for finite elements of degree at least two, and over time intervals on which the surface evolving under the  anisotropic mean curvature flow remains sufficiently regular.

The proposed algorithm is based on a system coupling the anisotropic surface flow to parabolic evolution equations for the surface normal $\nu$ and normal velocity $V$. These equations are for the first time  derived here. 
This approach was pioneered in 1984 by Huisken \cite{Huisken1984} for mean curvature flow, and stays to be an important tool for the analysis of geometric flows ever since. The same idea has also been proved to be useful in the numerical analysis of geometric flows, see, e.g.~\cite{MCF,KLL_Willmore,MCF_soldriven,MCF_generalised,MCF_codim,pq_method,MCFdiff,bulksurface_coupling}.

	The anisotropic mean curvature flow studied in this paper can be considered as the gradient flow of the anisotropic surface energy
\begin{equation}\label{eq:anenergy}
	\calE_{\ga}(\Ga) = \int_\Gamma \gamma(\nu) ,
\end{equation}
where $\Gamma$ is a closed orientable $C^1$-hypersurface 
in ${\mathbb R}^{d+1}$, $d\geq1$, with a continuous unit normal field $\nu$,
and $\gamma \colon \bS^{d} \to {\mathbb R}_{>0}$ is a given anisotropic energy density. 
It is helpful, to extend $\gamma$ to a function on $\bR^{d+1}$
as a positively one-homogeneous function.

In order to visualize the surface energy, it is convenient to consider a generalized  isoperimetric problem  for the surface energy $\calE_{\ga}$.  One wants to find the
shape, which minimizes $\calE_{\ga}$ under all shapes with a given 
enclosed volume. In order to do so, one defines the dual function
\begin{equation*}
	\gamma^\ast ( q) = \sup_{ p\in\bR^{d+1}\setminus\{ 0\}}
	\frac{ p\,\cdot\,q}{\gamma( p)} \qquad \forall  q \in \bR^{d+1}\,.
\end{equation*}
Then the solution of the isoperimetric problem is, up to a scaling,
the Wulff shape, see \cite{Gurtin1993} for details: 
\begin{equation*}
	\mathcal{W} = \{ q\in\bR^{d+1} : \gamma^\ast( q) \le 1\}\,.
\end{equation*}
This is the $1$-ball of $\gamma^\ast$, and we also define the $1$-ball
of $\gamma$
\begin{equation*}
	\mathcal{F} = \{ p \in\bR^d : \gamma( p) \le 1\}\,,
\end{equation*}
which is called Frank diagram. We refer to Figure~\ref{fig:BGNwulff}
for examples. 

In many applications, flows are of interest that decrease the total anisotropic energy $	\calE_{\ga}$.
To derive such flows, we consider    a family of closed, smooth and oriented hypersurfaces $(\Gamma(t))_{t \in [0,T]}$
and compute, see \cite{Giga_2006,BDGP2023} for a proof, 
\begin{equation*}
	\frac{\d}{\d t} \calE_{\ga} (\Gamma(t)) = \int_{\Gamma(t)} H_\gamma V.
\end{equation*}
Here, $V$ is the normal velocity of $\Gamma(t)$ and $	H_\ga = \nbg \cdot \big( \ga'(\nu) \big)$ is the anisotropic mean curvature. For $\nu$ we use the outward unit normal which leads to
the sign convention that $H_\ga$ is positive for convex surfaces.
We hence obtain that flows of the form
\begin{equation} \beta(\nu) V = - H_\ga \qquad \mbox{ on } \Gamma(t)
	\label{eq:anisolaw}
	\end{equation}
decrease the energy whenever the kinetic coefficient $\beta$ is positive. It can be shown that \eqref{eq:anisolaw} is the gradient flow of the anisotropic surface energy 
in the case that one uses a weighted $L^2$-inner product, where the weight is given by $\beta$, see, e.g., \cite{GarckeNSW2008,LauxSU2025}.
The above geometric evolution law has many applications both in mathematics as well as   in physics and engineering.
In materials science, surface motion is classified as geometric if the normal velocity of the surface depends only on the position and the local shape of the surface. In  particular,
no long range effects play a role. In a review article, Taylor, Cahn and Handwerker \cite{TAYLOR19921443} discuss that in materials science applications  many interface motion problems can be modeled as geometric and name crystal growth, certain types of phase change problems, grain growth and chemical etching as examples. Due to the fact that crystals are anisotropic these 
geometric evolution problems typically lead to laws which are similar as \eqref{eq:anisolaw}. We refer to \cite{TAYLOR19921443,Gurtin1993,BellettiniPaolini_1996,Giga_2006} for more details on anisotropic energies and anisotropic curvature flows in materials science and geometry.

Beside the case $\beta=1$, the kinetic term   $\beta=\frac 1\gamma$ is of particular interest.
As discussed in \cite{BellettiniPaolini_1996}  one can consider the evolution in relative geometry, i.e.,   all quantities are referred to the given so-called Finsler metric given by $\gamma$  representing the anisotropy. In this case,  one obtains a gradient flow which is \eqref{eq:anisolaw} with $\beta=\frac 1\gamma$.
Also for this particular choice of the kinetic coefficient one obtains that  the Wulff shape is shrinking in a self-similar way, see Theorem 1.7.3 in \cite{Giga_2006}. 
 Let us mention that for curves it is known that always self-similar solutions
exist, see \cite{DohmenGN1996}. However, in the case $\beta\neq \frac 1\gamma$   no explicit solution is known.

Analytically, anisotropic mean curvature flow is well studied. An important first result is due to Chen, Giga and Goto \cite{ChenGG1991}, who showed existence and uniqueness of viscosity solutions. Almgren, Taylor and Wang \cite{ATW1993} used an implicit time discretization  in the form of a minimizing movement scheme to show existence of solutions to 
\eqref{eq:anisolaw}. They also showed a short time existence result in the case that the anisotropy $\gamma$ is smooth.
A refined analysis of classical solutions in the curve case, including results on the long time behaviour are, due to Gage \cite{Gage1993}, see also \cite{MikulaS2001}.
By considering the evolution  $ V = - \gamma(\nu) H_\ga$ in relative geometry, see the discussion above, 
Bellettini and Paolini \cite{BellettiniPaolini_1996} 
 studied the  evolution
law using different approaches, such as the variational method of Almgren--Taylor--Wang,
the Hamilton--Jacobi approach, and an approximation using an anisotropic Allen--Cahn equation.
Chambolle and Novaga \cite{ChambolleN2007} 
gave simple proofs of convergence of the  Almgren--Taylor--Wang variational
approach, and Merriman--Bence--Osher algorithm relying on the theory of viscosity solutions. 
We refer to the recent paper \cite{LauxSU2025} who used a weak formulation of the anisotropic mean curvature operator in the BV-setting, which was introduced 
by \cite{CicaleseNP2010,GarckeS2011}, to show a weak-strong uniqueness result.



%
%
%
%

The aim of this paper is to derive and analyze a finite element method
for a parametric approach to solve \eqref{eq:anisolaw}. Using an idea originally
developed in \cite{MCF} for the (isotropic) mean curvature flow,
the scheme discretizes a system of evolution equations coupling the
position vector $X$, the surface normal $\nu$ and the normal velocity
$V$. These equations are anisotropic versions of identities derived by
Huisken in \cite{Huisken1984}. The resulting system is discretized in space with
the help of the evolving surface finite element method \cite{DziukElliott_ESFEM} using
continuous, piecewise polynomials of degree $k \geq 2$. 

\vskip 3mm
{\bf Main result:} Under an
appropriate smoothness assumption on the exact solution  and for
sufficiently small spatial grid size $h$
it holds:
\begin{align*}
	\max_{t \in [0,T]}    \|X_h^\ell(\cdot,t) - 
	X(\cdot,t)\|_{H^1(\Ga^0)^{d+1}} \leq &\ Ch^k , \\
	\max_{t \in [0,T]}  \|(\nu_h^L,V_h^L)(\cdot,t) - 
	(\nu,V)(\cdot,t)\|_{H^1(\Ga[X(\cdot,t)])^{d+2}} \leq &\ C h^k.
\end{align*}
In the above, $\Ga[X(\cdot,t)]$ is the exact surface at time
$t$, $\Ga^0=\Ga[X(\cdot,0)]$, and $X_h^\ell,\nu_h^L,V_h^L$ are 
suitable lifts of the discrete solution $X_h,\nu_h,V_h$ to the exact surfaces $\Ga^0$ and
$\Ga[X(\cdot,t)]$, respectively, see Section~\ref{section:ESFEM} for precise definitions and Theorem~\ref{theorem: error estimates - modified aMCF} for an exact formulation of the main result.
\vskip 2mm

For \emph{anisotropic} geometric flows numerical methods based on finite elements have started with Dziuk \cite{Dziuk_aCSF_1999}, showing semi-discrete error estimates for anisotropic curve shortening flow, and \cite{DeckenickDziuk_fullydiscr_aCSF_2002} showing fully discrete error estimates in the graph setting.
For  the crystalline flow of polygonal curves   different  methods have been  developed in \cite{RoosenT1992,GigaGiga_2000}.  A numerical method based on a formulation which redistributes points tangentially and which  uses a PDE for geometric quantities like curvature has been developed for curves in \cite{MikulaSevcovic_2001,MikulaSevcovic_2004}, while a regularised fully discrete algorithm was proposed in \cite{HausserVoigt_2006}. Pozzi \cite{Pozzi_aCSF_2007} has shown spatial error estimates for curves in higher codimension, also see \cite{Pozzi_anisotropic_area_2012} for some analytical results.
Recently, two new schemes \cite{DeckelnickNurnberg_aCSF_2023,DeckelnickNurnberg_new_aCSF_2023} were proposed for anisotropic curve shortening flow.
A numerical method for the anisotropic mean curvature flow of a graph is proposed in \cite{HoangBenes_aMCF_graph_2014}.
Barrett, Garcke, and N\"{u}rnberg have developed a method with tangential redistributions for anisotropic flows in \cite{BGN_a_plane_2008} for curves, in \cite{BGN_anisotropic_curves_2010} for anisotropic gradient flows for space curves, and in  \cite{BGN2012} for anisotropic elastic curves.

The first parametric finite element-based algorithms for anisotropic mean curvature flow of \emph{surfaces} were proposed in \cite{Pozzi_2008_aMCF} and \cite{BGN_anisotropic}. 
Algorithms for an\-iso\-tro\-pic \emph{Willmore} flow were proposed in \cite{Pozzi_2015_anisotropicWillmore,BGN_anisotropic,PerlPozziRumpf_2014}, and for anisotropic \emph{surface diffusion} in \cite{Burger_2005,HausserVoigt_2007,BGN_anisotropic,BaoLi_aSD_2023,BaoLi_aSD_2024} (the last method being structure-preserving).
For numerical methods for anisotropic high-order geometric flows see, e.g., \cite{HausserVoigt_2005}.
We refer to \cite{DeckelnickDE2005} for a survey up to 2005, and a more recent one \cite{BGN_survey} up to 2020.

For numerical methods based on level-sets or diffuse interfaces we refer, e.g., to \cite{GarckeNS1999,Li_etal_2009,ObermanOsherTakeiTsai,GraserKornhuberSack_2013,BGNanisoPF,Saalvalagli_etal_2021}.

For \emph{surfaces} evolving under an \emph{anisotropic} geometric flow, up to our knowledge, there are no error estimates for parametric approaches in the literature.


The paper is structured as follows: 

In the following section, we first present details on  surface calculus,  basic facts on anisotropy functions and on evolving hypersurfaces.
The governing evolution equations are then derived in in Section~\ref{section:anisotropic surface flows} together with 
weak formulations which will be the basis for the finite element method.
The evolving surface finite element method is described in Section~\ref{section:ESFEM}, the semi-discrete problem and the precise statement of the main result 
are stated in Section \ref{section:discretization and main result}.
In Section \ref{sec:erroranalysis} we prove the main result heavily relying on a matrix--vector formulation, consistency bounds and energy-type arguments.
In Section \ref{section:regularization} we introduce a regularized problem which will be helpful,  in particular, in situations where the anisotropy is nearly crystalline.
Section \ref{sec:numericalexperiments} presents numerical computations, beside others, rate of convergence simulations and simulations of anisotropic mean curvature flow with ``crystalline-like'' anisotropies  are given. Finally, an appendix give proofs for important formulas from geometric analysis.

\section{Preliminaries}
\subsection{\it Basic surface calculus} \label{sec:bsc}
Let $\Gamma \subset \mathbb R^{d+1}$ ($d=1,2,3$) be a smooth, embedded and closed hypersurface, oriented by a continuous unit normal field $\nu \colon \Gamma \rightarrow \mathbb R^{d+1}$. For
a function $u \colon \Gamma \rightarrow \mathbb R$ we denote by $\nabla_{\Ga} u \colon \Ga\to\R^{d+1}$ the \emph{tangential gradient} or \emph{surface gradient} of $u$ 
and write
\begin{equation*}
	\nbg u = \big(  \D_1 u, \D_2 u, \dotsc, \D_{d+1} u \big)^T ,
\end{equation*} 
so that we think of $\nbg u$ as a column vector. 
In the case of a vector-valued function $w=(w_1,\dotsc,w_{d+1})^T\colon \Ga\to\R^{d+1}$, we let
$\nabla_{\Ga}w=
(\nabla_{\Ga}w_1,
\dotsc,
\nabla_{\Ga}w_{d+1})$. 
Furthermore, we denote by $\nabla_{\Ga} \cdot w = \text{tr}(\nbg w)$ the \emph{surface divergence} of $w$, while we use the convention that the divergence of a matrix is computed column-wise. 
For a scalar function $u\colon \Gamma \rightarrow \mathbb R$ we let 
 $\varDelta_{\Ga} u = \nabla_{\Ga} \cdot \nabla_{\Ga} u$ the \emph{Laplace--Beltrami operator}. \\
The (extended) Weingarten map is defined by $A = \nbg \nu \in \R^{(d+1) \times (d+1)}$,
which is a symmetric matrix, whose eigenvalues are the principal curvatures $\kappa_1,\dotsc, \kappa_d$, and $0$ (corresponding to the eigenvector $\nu$).
The mean curvature of $\Ga$ is then given by $H = \sum_{j=1}^{d} \kappa_j = \tr(A)$, 
where we note that $H$ is positive for a sphere if the unit normal vector points outwards.  
See the review \cite{DeckelnickDE2005}, or \cite[Appendix~A]{Ecker2012}, etc.~for these notions.

\subsection{\it Anisotropy functions} \label{sec:anisotropy}

Let $\gamma \in C^0(\mathbb R^{d+1}, \mathbb R_{\geq 0}) \cap C^4( \R^{d+1} \setminus \{0\}, \R_{>0})$ be positively homogeneous of degree one, i.e.,
\begin{equation} \label{eq:homog}
		\ga(\lambda w) = \lambda \ga(w) \qquad \text{for all } w  \in \R^{d+1}, \lambda > 0 .
\end{equation}
Using \eqref{eq:homog} one verifies that
\begin{eqnarray}
		\ga'(\lambda w)& =& \ga'(w) , \qquad \ga''(\lambda w) = \frac{1}{\lambda}  \ga''(w) ,   \label{eq:aniso1}  \\
	\ga'(\lambda w) \cdot w & = & \ga(w) , \qquad 	\ga''(w) \, w = 0    \label{eq:aniso2}
\end{eqnarray}
for any $w \in \R^{d+1} \setminus \{0\}$, $\lambda > 0$, where $\cdot$  denotes the scalar product of two vectors.  Furthermore, it can be shown that there exists $c_1 \geq 0$ such that
\begin{eqnarray}  
| \gamma''(v_1) - \gamma''(v_2) |& \leq &  c_1 | v_1 - v_2 | \quad \forall \,  v_1,v_2 \in \mathbb R^{d+1}, \, \frac{1}{2} \leq  |v_1|,|v_2| \leq 2 , \label{eq:gamma2} \\
| \gamma'''(v_1) - \gamma'''(v_2) |& \leq &  c_1 | v_1 - v_2 | \quad \forall \,  v_1,v_2 \in \mathbb R^{d+1}, \, \frac{1}{2} \leq  |v_1|,|v_2| \leq 2 . \label{eq:gamma3} 
\end{eqnarray}
In what follows we shall assume that $\gamma$
is strongly convex in the sense that there exists $c_0 > 0$ such that
\begin{equation} \label{eq:convex}
		w \cdot \ga''(z) w \geq c_0 |w|^2  \qquad \text{for all } z,w  \in \R^{d+1}, \, |z|=1, \, z \cdot w = 0 .
\end{equation}
As an  immediate consequence of the above property we deduce that there exists $\delta>0$ such that for all
$\tilde z, z, w \in \R^{d+1}$ satisfying $ \frac{1}{2} \leq  | \tilde z |,  | z | \leq 2,  | \tilde z - z| < \delta$ and  $z \cdot w =0$:
\begin{equation} \label{eq:convex1}
w \cdot \ga''(\tilde z) w \geq \frac{c_0}{4} | w|^2. 
\end{equation} 
Indeed, we infer with the help of  \eqref{eq:aniso1}, \eqref{eq:convex} and \eqref{eq:gamma2}
\begin{eqnarray*}
w \cdot \ga''(\tilde z) w & = & w \cdot \ga''(z) w + w \cdot (\ga''(\tilde z) - \ga''(z)) w  \\
& = & \frac{1}{|z|} w \cdot \gamma''\bigl( \frac{z}{|z|} \bigr)w +  w \cdot (\ga''(\tilde z) - \ga''(z)) w \\
 & \geq &   \frac{c_0}{2}  | w |^2 - c_1 | \tilde z - z| \, | w |^2 
 \geq (\frac{c_0}{2}  - c_1 \delta) | w| ^2 = \frac{c_0}{4} | w|^2,
\end{eqnarray*}
provided that $\delta = \frac{c_0}{4 c_1}$.

\begin{example}
\label{example:anisotropies}
	(a) Any norm on $\R^{d+1}$ which is smooth outside $0$ is a possible anisotropy function $\gamma$ suitable for
	this paper. In particular, one can take an $\ell^p$-norm if $p\in(1,\infty)$.
	
	(b) In many papers, see, e.g., \cite{BGN_anisotropic_curves_2010,BGN_anisotropic,BGN_survey}, the anisotropy function $\ga(w) = \sqrt{w \cdot G w}$ with a positive definite and symmetric  matrix $G \in \R^{(d+1) \times (d+1)}$ is considered. Its derivatives are given by
	\begin{equation*}
		\ga'(w) = \frac{1}{\ga(w)}  G  w , \andquad
		\ga''(w) = \frac{1}{\ga(w)}  G   - \frac{1}{\ga(w)^3} \big( G  w\big) \otimes \big(G  w\big), 
	\end{equation*}
	in particular we then have $\ga''(w) w = 0$ for any $w \in \R^{d+1}$. 
	
(c)	The above anisotropy only leads to ellipsoidal Wulff shapes. Therefore, often combinations  of such anisotropy functions are considered.
Following \cite{BGN_anisotropic_curves_2010,BGN_anisotropic}, we now consider a larger class of surface energy densities, which
are given as suitable norms of the ellipsoidal anisotropies. In particular,
we choose
\begin{equation} \label{eq:hggenBGNaniso}
	\gamma( w) = \left(\sum_{\ell=1}^L\,
	\left[\gamma_{\ell}( w)\right]^r\right)^{\frac1r}, \qquad
	\gamma_\ell( w)= \sqrt{ w \cdot G_{\ell}\,w}\,,
	\qquad\ell=1,\ldots, L\,,
\end{equation}
so that
\[
\gamma'( w) = \left[\gamma( w)\right]^{1-r}\sum_{\ell=1}^L\,
\left[\gamma_{\ell}( w)\right]^{r-1} \gamma'_\ell( w)\,. 
\]
Here $r\in [1,\infty)$ and $ G_{\ell} \in \bR^{(d+1)\times (d+1)}$, 
$\ell=1,\ldots, L$, are symmetric and positive definite. It turns
out that most energies of relevance can be approximated by the above
class of energies. In particular, hexagonal and cubic anisotropies can be
modelled with appropriate choices of $r$, $L$ and 
$\{ G_{\ell}\}_{\ell=1}^L$, see Figure~\ref{fig:BGNwulff}.
\begin{figure}
	\centering
	\includegraphics[width=\textwidth, trim={0 220 0 220}, clip]{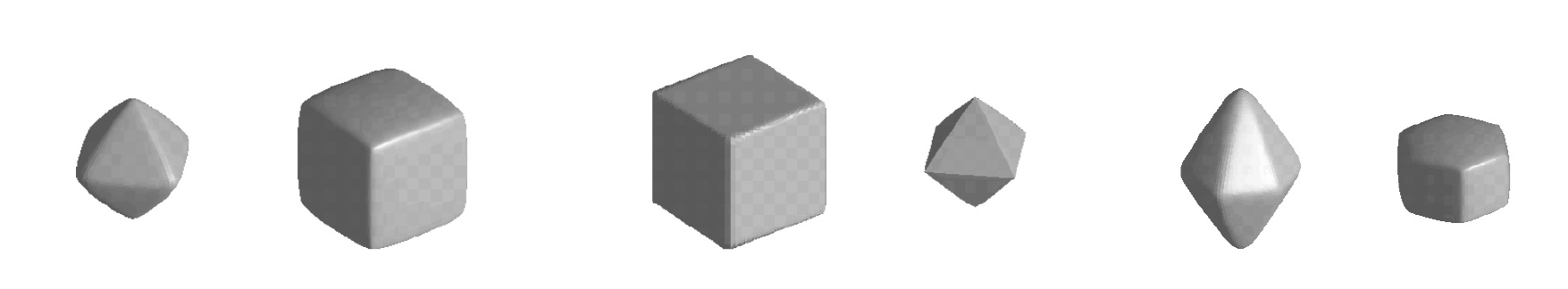} 
\caption{Frank diagram and Wulff shape in $\bR^3$ for a
	regularized $\ell^1$-norm, 
	$\gamma( w) = \sum_{\ell=1}^3\,[ \eps^2\,| w|^2+
	w_\ell^2(1-\eps^2)]^\frac12$, $\eps = 0.1$, left,
	a cubic anisotropy, 
	$\gamma( w) = [\sum_{\ell=1}^3\,[ \eps^2\,| w|^2+
	w_\ell^2(1-\eps^2)]^\frac r2]^\frac1r$, $\eps = 0.01$, $r =
	30$, middle, and a hexagonal anisotropy, 
	$\gamma( w) = \sum_{\ell=1}^4\,[  w\,\cdot\,
	\protect R_\ell^{T}\,
	\diag(1, \eps^2, \eps^2)\,\protect R_\ell^{}\, w ]^\frac12$, 
	$\eps = 0.1$, right. Here $\protect R_\ell^{}$, 
	$\ell = 1,\ldots,4$ are suitable rotation matrices, see \cite{BGN_anisotropic} for details.
}
\label{fig:BGNwulff}
\end{figure}%
The choice $r=1$ has the advantage
that it leads to more linear schemes. In particular, for $r=1$ we obtain
\begin{equation}
	\gamma''( w) = \sum_{\ell=1}^L\,
	\left[ 
	\frac{1}{\ga_\ell(w)}  G_\ell  - \frac{1}{\ga_\ell(w)^3} \big( G_\ell  w\big) \otimes \big(G_\ell  w\big)  \right] .
\end{equation}
\end{example}

For an oriented hypersurface $\Gamma$ with unit normal $\nu$ as in Section \ref{sec:bsc} the vector $\gamma'(\nu)$ is called \emph{Cahn--Hoffman vector}, while we denote by
\begin{equation}
	\label{eq:Cahn--Hoffman vector}
   	H_\ga = \nbg \cdot \big( \ga'(\nu) \big) 
\end{equation}
the anisotropic mean curvature $H_\ga$ of $\Gamma$. 

Note that in the isotropic case $\gamma(w)=|w|$ we have $\gamma'(\nu)=\nu$ so that
\begin{displaymath}
H_\gamma = \nabla_{\Gamma} \cdot \nu= \tr \nabla_{\Gamma} \nu = \tr A = H.
\end{displaymath}

The following identity will be important in deriving the governing evolution equation for the surface normal $\nu$.

\begin{lemma} Let $\Gamma \subset \mathbb R^{d+1}$ be a smooth hypersurface with unit normal $\nu$. Then
\begin{equation} \label{eq:three identities - H and nu-aniso}
			\nbg H_\ga = 
			\nbg \cdot \Big( \ga''(\nu) \nbg \nu \Big) + |A|_{\ga''}^2 \, \nu,
\end{equation}
where we abbreviated
\begin{equation} 	\label{eq:define MCF-like nonlinearity}
	|A|_{\ga''}^2 := |\nbg \nu|_{\ga''}^2 := \nbg \nu : ( \ga''(\nu) \nbg \nu ) 
\end{equation}
with  $A:B := \tr(A B^T)$ for $A,B \in \mathbb R^{(d+1) \times (d+1)}$. 
\end{lemma}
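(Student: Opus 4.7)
The identity is the anisotropic analogue of the classical Simons-type formula $\nbg H = \nbg \cdot \nbg\nu + |A|^2\,\nu$. My plan is to verify it by a direct component computation, letting the anisotropy enter only through the chain rule and the relations collected in \eqref{eq:aniso2}. Throughout I write $D_i$ for the $i$-th component of the tangential gradient and $A_{ij}=D_i\nu_j$ for the (extended) Weingarten map.

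First I would use the chain rule $\nbg(\gamma'(\nu)) = \gamma''(\nu)\nbg\nu$ to rewrite
$$H_\gamma = \nbg\cdot\gamma'(\nu) = \tr\bigl(\gamma''(\nu)\nbg\nu\bigr) = \sum_{k,l}\gamma''_{kl}(\nu)\,D_k\nu_l.$$
Applying one more tangential derivative $D_i$ and using the product rule produces a $\gamma'''$-contribution plus the second-order term $\sum_{k,l}\gamma''_{kl}(\nu)\,D_iD_k\nu_l$. Expanding the $i$-th component of $\nbg\cdot\bigl(\gamma''(\nu)\nbg\nu\bigr)$ analogously (using the column-wise divergence convention, and the Codazzi-type identity $D_a\nu_b = D_b\nu_a$ reflecting symmetry of the Weingarten map) yields a second $\gamma'''$-contribution plus $\sum_{j,k}\gamma''_{jk}(\nu)\,D_jD_i\nu_k$. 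The two $\gamma'''$-terms coincide: this is a direct consequence of the total symmetry of $\gamma'''$ combined with the symmetry of $\nbg\nu$.

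The claim therefore reduces to showing
$$\sum_{j,k}\gamma''_{jk}(\nu)\bigl[D_iD_j\nu_k - D_jD_i\nu_k\bigr] = |A|_{\gamma''}^2\,\nu_i,$$
and the core step is the standard hypersurface commutator
$$D_iD_jf - D_jD_if = A_{jm}\nu_i\,D_mf - A_{im}\nu_j\,D_mf,$$
applied with $f=\nu_k$ (which, if needed, I would derive by extending $f$ smoothly to a neighbourhood of $\Gamma$ and projecting). Contracted against $\gamma''_{jk}(\nu)$, the first summand collapses to $\nu_i\,\tr\bigl(\gamma''(\nu)A^2\bigr) = |A|_{\gamma''}^2\,\nu_i$ after using symmetry of both $\gamma''(\nu)$ and $A$; the second summand vanishes outright, since $\gamma''(\nu)\nu=0$ from \eqref{eq:aniso2} forces $\sum_j\nu_j\,\gamma''_{jk}(\nu)=0$ for every $k$.

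The main obstacle is precisely this final commutator step together with the associated index bookkeeping. Once the commutator is in hand, everything closes cleanly: the cancellation mediated by $\gamma''(\nu)\nu=0$ here exactly mirrors the role played by $\sum_k\nu_k\,D_m\nu_k = \tfrac12 D_m|\nu|^2 = 0$ in the isotropic Simons identity, so no additional structural ingredient beyond \eqref{eq:aniso2} is required.
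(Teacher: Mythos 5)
Your proof is correct, but it is organized differently from the paper's. The paper never expands $H_\ga$ via the chain rule before commuting derivatives: it applies the vector-field interchange formula \eqref{eqA:interchange formula - grad-div} (itself a packaging of the coordinate commutator \eqref{eqA:interchange formula - basic}) directly to $w=\ga'(\nu)$, obtaining $\nbg H_\ga = \nbg\cdot\big(\nbg(\ga'(\nu))\big)^T + \big(A:\nbg(\ga'(\nu))\big)\nu - A\big(\nbg(\ga'(\nu))\big)\nu$, and only afterwards uses the chain-rule identity $\nbg(\ga'(\nu)) = \nbg\nu\,\ga''(\nu) = (\ga''(\nu)\nbg\nu)^T$ together with $\ga''(\nu)\nu=0$ from \eqref{eq:aniso2} to identify the three terms; as a result no $\ga'''$ ever appears. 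You instead differentiate the scalar identity $H_\ga = \tr(\ga''(\nu)\nbg\nu)$ and expand $\nbg\cdot(\ga''(\nu)\nbg\nu)$ in components, which produces two $\ga'''$-contributions that must be shown to cancel; your justification (total symmetry of the third derivative combined with symmetry of $A$, after relabelling indices) is valid, and the remaining commutator step with $f=\nu_k$, contracted against $\ga''_{jk}(\nu)$ and closed with $\ga''(\nu)\nu=0$, correctly yields $|A|_{\ga''}^2\,\nu_i$ for the first summand and zero for the second. So both arguments rest on the same two ingredients --- the tangential-derivative commutator \eqref{eqA:interchange formula - basic} and \eqref{eq:aniso2} --- but the paper's route buys a short, coordinate-free computation with no third-derivative bookkeeping, while yours is more elementary and self-contained in coordinates at the price of the extra $\ga'''$ cancellation.
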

\begin{proof}
Using the definition $H_\ga = \nbg \cdot \big( \ga'(\nu) \big)$ and applying the gradient--divergence interchange formula Lemma~\ref{lemma:interchange formulas} \eqref{eqA:interchange formula - grad-div},
 we obtain 
\begin{align*}
	  \nbg H_\ga
	= &\ \nbg \big( \nbg \cdot ( \ga'(\nu) ) \big) \\
	= &\ \nbg \cdot \big( \nbg ( \ga'(\nu) ) \big)^T + (A : \nbg (\ga'(\nu)) ) \nu - A (\nbg (\ga'(\nu))) \nu  \\
	= &\ \nbg \cdot \big( \ga''(\nu) \nbg \nu \big) + \big(\nbg \nu : ( \ga''(\nu) \nbg \nu ) \big) \nu ,
\end{align*}
where the last equality follows  from
\begin{equation}
	\label{eq:symmetry of grad ga'}
	\nbg (\ga'(\nu)) = \nbg \nu \ga''(\nu)  = (\ga''(\nu) \nbg \nu)^T ,
\end{equation}
noting that both $A = \nbg \nu$ and $\ga''(\nu)$ are symmetric, and by the fact that $(\nbg (\ga'(\nu))) \nu = \nbg \nu \ga''(\nu) \nu = 0$ in view of \eqref{eq:aniso2}.
Recalling  \eqref{eq:define MCF-like nonlinearity} we obtain the desired form. \qed
\end{proof}

\begin{remark}
\label{remark:isotropic gamma}
In the isotropic case $\gamma(w)=|w|$, we have $\ga'(\nu) = \nu$, and $\ga''(\nu)  = I_{d+1}- \nu \nu^T$ so that
$|A|_{\ga''}^2  = \nbg \nu \cdot ( \ga''(\nu) \nbg \nu ) = \nbg \nu \cdot \nbg \nu = |A|^2$. Hence,  \eqref{eq:three identities - H and nu-aniso} reduces to the well-known identity
\begin{equation} \label{eq:three identities - H and nu}
\nbg H = \ \laplace_\Ga \nu + |A|^2 \nu .
\end{equation}
\end{remark}

\subsection{\it Evolving hypersurfaces} \label{sec:evolving}

Let $(\Gamma(t))_{t \in [0,T]}$ be a family of closed, smooth and oriented hypersurfaces with $G_T:= \bigcup_{t \in [0,T]} ( \Gamma(t) \times \lbrace t \rbrace)$. We assume that there exists a smooth mapping $X \colon \Gamma^0 \times [0,T] \rightarrow \mathbb R^{d+1}$ such that
$\Gamma(t)= \lbrace X(p,t) \, : \, p \in \Gamma^0 \rbrace$ 
and that $X(\cdot,t)$ is a diffeomorphism from $\Gamma^0$ to $\Gamma(t)$. In what follows we shall abbreviate 
$\Ga[X] = \Ga[X(\cdot,t)] = \Gamma(t)$ and denote by $v \colon G_T \rightarrow \mathbb R^{d+1}$ the velocity of material points given by
\begin{equation}
	\label{eq:velocity ODE}
	\partial_t X(p,t) = v(X(p,t),t).
\end{equation}
For a function $w \colon G_T \rightarrow \mathbb R$ we define the \emph{material derivative} (with respect to the parametrization $X$) by
$$
\mat w(x,t) = \diff \, w(X(p,t),t) \qquad \hbox{ for } \ x=X(p,t) \in \Gamma(t).
$$
The following  interchange formulas hold for sufficiently regular functions $u \colon G_T \to \R$ and vector fields $w\colon G_T \to \R^{d+1}$: 
\begin{subequations}
	\begin{align}
		\label{eq:interchange formula - mat-grad}
		\mat \big( \nbg u \big) = &\ \nbg ( \mat u ) - \big( \nbg v  - \nu \nu^T (\nbg v)^T \big) \nbg u , \\
		\label{eq:interchange formula - mat-div}
		\mat \big( \nbg \cdot w \big) = &\ \nbg \cdot ( \mat w ) - (\nbg v)^T : \nbg w + \big(\nbg v \nu \nu^T\big) : \nbg w .
	\end{align}
\end{subequations}
See Lemma~\ref{lemma:interchange formulas} for proofs herein. We refer to \cite[Lemma~2.4 and 2.6]{DziukKronerMuller} for the original componentwise formulation.

For the remainder of the paper we shall assume that the velocity vector $v$ points in normal direction, i.e.,
\begin{equation*}
	v=V\nu ,
\end{equation*}
where $V$ is the normal velocity of $\Gamma(t)$. 

In this setting we have the following basic identities which can be found in  \cite{Huisken1984,Mantegazza,BGN_survey}:
\begin{subequations}
	\label{eq:three identities}
	\begin{align}
		\label{eq:three identities - mat nu}
		\mat \nu = &\ - \nbg V , \\
		\label{eq:three identities - mat H}
		\mat H = &\ - \laplace_\Ga V - |A|^2 V . 
	\end{align}
\end{subequations}

In the next lemma we prove appropriate variants of \eqref{eq:three identities} which will be useful for the anisotropic case.

\begin{lemma}
\label{lemma:anisotropic three identities} 
	Let $\Ga[X]$ be a sufficiently smooth evolving hypersurface with velocity vector $v = V \nu$ and let
	$\gamma$ be an anisotropy function.  Then it holds
	\begin{subequations}
		\label{eq:three identities-aniso}
		\begin{align}
			\mat \ga'(\nu) = &\ - \ga''(\nu) \nbg V ,
			\label{eq:three identities - normal-aniso}\\
			\label{eq:three identities - mat H-aniso}
			\mat H_\ga = &\ 
			- \nbg \cdot \Big( \ga''(\nu) \nbg V \Big) - |A|_{\ga''}^2 V.
		\end{align}
	\end{subequations}
\end{lemma}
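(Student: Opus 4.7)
The first identity \eqref{eq:three identities - normal-aniso} follows immediately from the chain rule and the isotropic formula \eqref{eq:three identities - mat nu}: since $\ga'$ is smooth on $\R^{d+1}\setminus\{0\}$,
\[
\mat \ga'(\nu) \;=\; \ga''(\nu)\,\mat\nu \;=\; -\ga''(\nu)\,\nbg V .
\]

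For the second identity \eqref{eq:three identities - mat H-aniso}, I would start from $H_\ga = \nbg\cdot(\ga'(\nu))$ and apply the material derivative together with the interchange formula \eqref{eq:interchange formula - mat-div} taken with $w=\ga'(\nu)$, obtaining
\[
\mat H_\ga \;=\; \nbg\cdot\bigl(\mat\ga'(\nu)\bigr) \;-\;(\nbg v)^T:\nbg(\ga'(\nu)) \;+\; (\nbg v\,\nu\nu^T):\nbg(\ga'(\nu)) .
\]
The first term on the right already gives the desired leading contribution $-\nbg\cdot(\ga''(\nu)\nbg V)$ in view of the identity just proved; it therefore remains to show that the two remaining terms combine to $-V\,|A|_{\ga''}^2$.

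To simplify them, I would plug in $v=V\nu$ to obtain the decomposition $\nbg v = (\nbg V)\nu^T + V\,A$, and use \eqref{eq:symmetry of grad ga'} to write $\nbg(\ga'(\nu))=A\,\ga''(\nu)$. Three pointwise orthogonality relations then do the cleaning work: $A\nu=0$ (from differentiating $|\nu|^2=1$), $\ga''(\nu)\nu=0$ from \eqref{eq:aniso2}, and $\nbg V\cdot\nu=0$. Under cyclic permutation of the trace, the $\nu(\nbg V)^T$ contribution to $(\nbg v)^T:\nbg(\ga'(\nu))$ collapses to a factor containing $A\nu$ and thus vanishes, while the whole correction $(\nbg v\,\nu\nu^T):\nbg(\ga'(\nu))$ vanishes because $\nbg v\,\nu=\nbg V$ and $\nu^T\ga''(\nu)=0$. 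The only surviving piece is
\[
V\,A:\bigl(A\,\ga''(\nu)\bigr) \;=\; V\,\tr\bigl(A\,\ga''(\nu)\,A\bigr) \;=\; V\,A:\bigl(\ga''(\nu)\,A\bigr) \;=\; V\,|A|_{\ga''}^2,
\]
where the middle step is cyclic trace and the last step uses \eqref{eq:define MCF-like nonlinearity}. Assembling everything yields \eqref{eq:three identities - mat H-aniso}.

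No serious obstacle is expected: the argument is purely algebraic once the interchange formula \eqref{eq:interchange formula - mat-div} and the earlier identity \eqref{eq:symmetry of grad ga'} are in hand. The main care point is bookkeeping with the paper's convention $[\nbg w]_{ij}=D_i w_j$ (so that $\nbg(\ga'(\nu))=A\,\ga''(\nu)$, \emph{not} $\ga''(\nu)A$), and with the Frobenius product, so that each of the three orthogonality relations is applied in the correct slot of the trace.
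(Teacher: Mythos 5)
Your proposal is correct and follows essentially the same route as the paper: the chain rule with \eqref{eq:three identities - mat nu} for the first identity, and the interchange formula \eqref{eq:interchange formula - mat-div} applied to $w=\ga'(\nu)$ together with the decomposition $\nbg v=\nbg V\otimes\nu+V A$, the symmetry relation \eqref{eq:symmetry of grad ga'}, and the orthogonality facts $A\nu=0$ and $\ga''(\nu)\nu=0$ for the second. The only (harmless) cosmetic difference is that you substitute $v=V\nu$ also in the third term before observing it vanishes, whereas the paper kills it directly via $\ga''(\nu)\nu=0$.
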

\begin{proof} 
Equation \eqref{eq:three identities - normal-aniso} follows directly from the chain rule and 
\eqref{eq:three identities - mat nu}, i.e.,
\begin{align*}
	\mat (\ga'(\nu)) = &\ \ga''(\nu) \mat \nu = - \ga''(\nu) \nbg V .
\end{align*}

Next, using the interchange formula \eqref{eq:interchange formula - mat-div} and the definition of $H_\ga$, we directly compute
\begin{equation}
	\begin{aligned}
		\mat H_\ga = &\ \mat \big( \nbg \cdot ( \ga'(\nu) ) \big) \\
		= &\  \nbg \cdot \big( \mat (\ga'(\nu)) \big)
		- (\nbg v)^T : \nbg ( \ga'(\nu) ) + \big( \nbg v \nu \nu^T \big) : \nbg ( \ga'(\nu) ) . 
	\end{aligned}
\end{equation}
We now compute all three terms of the right-hand side more precisely.

In the first term on the right-hand side, we use \eqref{eq:three identities - normal-aniso} to obtain
$$ \nbg \cdot \big( \mat (\ga'(\nu)) \big)=-\nbg \cdot \Big( \ga''(\nu) \nbg V \Big).$$


In the second term we eliminate $v = V \nu$ 
and use the identity
\begin{equation*}
	\nbg v = \nbg (V \nu) = \nbg V \otimes \nu + V \nbg \nu , 
\end{equation*}
to obtain
\begin{align*}
	(\nbg v)^T : \nbg ( \ga'(\nu) )
	= &\ \tr \Big( \big( \nbg V \otimes \nu + V \nbg \nu \big) \nbg \nu \ga''(\nu) \Big) \\
	= &\ \tr \Big( \nbg V \nu^T \nbg \nu \ga''(\nu) + V \nbg \nu \nbg \nu \ga''(\nu) \Big) \\
	= &\ \nbg \nu : \big( \ga''(\nu) \nbg \nu \big) V ,
\end{align*}
where the last equality holds because both  $\nbg \nu$ and $ \ga''(\nu) $ are symmetric and since $\nu^T \nbg \nu = 0$.

The third term vanishes. Indeed, by using
the symmetry of   $\nbg \nu$ and $ \ga''(\nu) $,
we have
\begin{align*}
	\big( \nbg v \nu \nu^T \big) : \nbg ( \ga'(\nu) ) 
	= &\ \tr \Big( \nbg v \nu \nu^T \ga''(\nu) \nbg \nu \Big) = 0 ,
\end{align*}
where the last equality is a consequence of \eqref{eq:aniso2}. 
\qed
\end{proof}

\section{Anisotropic mean curvature flow}
\label{section:anisotropic surface flows}

\subsection{\it Evolution law} 
Let $\gamma$ be an anisotropy function as introduced in Section \ref{sec:anisotropy}. 
We say that a family $(\Gamma(t))_{t \in [0,T]}$ of closed, embedded hypersurfaces evolves by anisotropic mean curvature flow if 
\begin{equation}
	\label{eq:anisotropicMCF - beta}
	\beta(\nu) V = - H_\ga \qquad \mbox{ on } \Gamma(t).
\end{equation}
Here, $V$ is the normal velocity of $\Gamma(t)$ and  $\beta \in C^2( \R^{d+1}, \mathbb R_{>0})$ is a given kinetic function. Under this assumption 
there exist $c_2>0, c_3 \geq 0$ such that
\begin{eqnarray}
c_2 \leq \beta(v) & \leq & c_3 \qquad \qquad \; \, \forall v \in \mathbb R^{d+1}, \frac{1}{2}  \leq |v| \leq 2, \label{eq:betacond1} \\
| \beta(v_1) - \beta(v_2) |&  \leq &  c_3 | v_1 - v_2| \quad \forall  \,  v_1,v_2 \in \mathbb R^{d+1}, \, \frac{1}{2} \leq  |v_1|,|v_2| \leq 2,   \label{eq:betacond2} \\
| \beta'(v_1) - \beta'(v_2) |&  \leq &  c_3 | v_1 - v_2| \quad \forall  \,  v_1,v_2 \in \mathbb R^{d+1}, \, \frac{1}{2} \leq  |v_1|,|v_2| \leq 2.  \label{eq:betacond3}
\end{eqnarray}

For the specific choice $\beta \equiv 1$ we obtain
\begin{equation}
	\label{eq:anisotropicMCF}
	V = - H_\ga \qquad \mbox{ on } \Gamma(t).
\end{equation}
In the isotropic case,  \eqref{eq:anisotropicMCF} is the classical mean curvature flow, i.e.,~$V = - H$.

In what follows we assume a parametric description of the hypersurfaces $\Gamma(t)$ as outlined in Section \ref{sec:evolving}, so that 
$\Gamma(t)= \lbrace X(p,t) \, : \, p \in \Gamma^0 \rbrace$ for some smooth mapping $X \colon \Gamma^0 \times[0,T] \rightarrow \mathbb R^{d+1}$. Here, $\Gamma^0$ is the given initial hypersurface. In
order to satisfy \eqref{eq:anisotropicMCF - beta} taking initial conditions into account, we require
\begin{subequations}
\label{eq:anisotropicMCF - problem}
	\begin{align}
		\label{eq:surface evolution} 
		\partial_t X = &\ v \circ X \qquad \mbox{ on } \Gamma^0 \times (0,T), \quad \mbox{ where } v = V \nu \,,\\
		\ 
		 \beta(\nu) V =& - H_\gamma = - \nbg \cdot \big( \ga'(\nu) \big) , \\
		\label{eq:initcond}
		X(\cdot,0) = &\ \mbox{id}_{\Gamma^0} \qquad  \; \mbox{ on } \Gamma^0. 
	\end{align}
\end{subequations}

In order to set up our numerical method we follow the approach introduced in \cite{MCF} and discretize a system that involves not only the position vector $X$, but also the normal vector $\nu$  and the normal velocity $V$.
The corresponding evolution equations will be derived in the next section. 



\subsection{\it Evolution equations for anisotropic mean curvature flow}


\begin{lemma}
	\label{lemma:aMCF evolution equations - beta}
	Let $\Ga[X]$ be a sufficiently smooth solution of the anisotropic mean curvature flow  \eqref{eq:anisotropicMCF - problem}. Then the normal vector and normal velocity satisfy
	\begin{subequations}
		\label{eq:evolution eq - modified aMCF}
		\begin{align}
			\beta(\nu) \mat \nu 
			= &\ \nbg \cdot \Big( \ga''(\nu) \nbg \nu \Big) + |A|_{\ga''}^2 \, \nu + V \nbg \nu \beta'(\nu) ,\label{eq:evolution eq - modified aMCF(a)} \\ 
			\beta(\nu) \mat V 
			= &\ \nbg \cdot \Big( \ga''(\nu) \nbg V \Big) + |A|_{\ga''}^2 V  +  V \nbg V \cdot \beta'(\nu) .
		\end{align}
	\end{subequations}
\end{lemma}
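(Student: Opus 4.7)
The proof reduces to two short calculations that combine the flow law \eqref{eq:anisotropicMCF - beta} with the identities already established for $\mat \nu$, $\mat H_\gamma$, and $\nbg H_\gamma$. No genuinely new geometric identity is needed; the only novelty is handling the kinetic factor $\beta(\nu)$ via the chain rule.

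For the evolution of $\nu$, the plan is to start from the isotropic-style identity \eqref{eq:three identities - mat nu}, namely $\mat \nu = -\nbg V$, multiply by $\beta(\nu)$, and then replace the right-hand side using the flow equation. Concretely, taking the surface gradient of $\beta(\nu) V = -H_\gamma$ and using the chain rule $\nbg (\beta(\nu)) = \nbg \nu \, \beta'(\nu)$ gives
\begin{equation*}
\beta(\nu)\, \nbg V = -\nbg H_\gamma - V \, \nbg \nu \, \beta'(\nu).
\end{equation*}
Substituting this for $\beta(\nu) \nbg V = -\beta(\nu) \mat \nu$ and invoking \eqref{eq:three identities - H and nu-aniso} to expand $\nbg H_\gamma$ yields \eqref{eq:evolution eq - modified aMCF(a)}.

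For the evolution of $V$, the plan is dual: take the material derivative of $\beta(\nu) V = -H_\gamma$ and expand the left-hand side by the product and chain rules,
\begin{equation*}
\beta'(\nu) \cdot (\mat \nu) \, V + \beta(\nu) \mat V = - \mat H_\gamma.
\end{equation*}
Replacing $\mat \nu = -\nbg V$ (from \eqref{eq:three identities - mat nu}) and $\mat H_\gamma$ via \eqref{eq:three identities - mat H-aniso} gives the second identity after rearrangement.

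The main subtlety — and really the only thing to check carefully — is bookkeeping the tensorial shapes: $\beta'(\nu)\in\R^{d+1}$, $\nbg\nu\in\R^{(d+1)\times(d+1)}$ is symmetric, and $\nbg V\in\R^{d+1}$, so $\nbg\nu\,\beta'(\nu)$ is a vector (appearing in the $\mat \nu$ equation) and $\nbg V\cdot\beta'(\nu)$ is a scalar (appearing in the $\mat V$ equation), consistent with the ranks of the left-hand sides. Beyond this, the argument is just a direct reduction to the already proven identities in Section~\ref{sec:evolving} and Lemma~\ref{lemma:anisotropic three identities}, and there is no analytic obstacle.
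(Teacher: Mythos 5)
Your argument is correct and coincides with the paper's own proof: both identities are obtained by differentiating the flow law $\beta(\nu)V=-H_\gamma$ (surface gradient for the $\nu$-equation, material derivative for the $V$-equation), applying the product and chain rules to $\beta(\nu)$, and substituting the previously established identities \eqref{eq:three identities - mat nu}, \eqref{eq:three identities - H and nu-aniso}, and \eqref{eq:three identities - mat H-aniso}. The tensorial bookkeeping you highlight is exactly the point where care is needed, and you handle it correctly.
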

\begin{proof}
	(a) 
	 Using \eqref{eq:three identities - mat nu}, $\beta(\nu) V = -H_\gamma$, the product rule and the chain rule yields
	\begin{align*}
		\beta(\nu) \mat \nu = & - \beta(\nu) \nbg V 
		= - \nbg \big( \beta(\nu) V \big) + V\nbg \big(\beta(\nu)\big)  =\nbg H_\ga + V \nbg \nu \beta'(\nu) \\
		=& \quad  \nbg \cdot \Big( \ga''(\nu) \nbg \nu \Big) + |A|_{\ga''}^2 \nu + V \nbg \nu \beta'(\nu)
	\end{align*}
	in view of  \eqref{eq:three identities - H and nu-aniso}, which gives \eqref{eq:evolution eq - modified aMCF(a)}.

	
	(b) 
	We use $\beta(\nu) V = -H_\gamma$, the product rule and the chain rule to obtain
	\begin{align*}
		\beta(\nu) \mat V = &\  \mat (\beta(\nu)V )- \mat \big( \beta(\nu) \big) V = - \mat H_\ga - V \beta'(\nu) \cdot \mat \nu  \\
		= & \  \nbg \cdot \Big( \ga''(\nu) \nbg V \Big) +  |A|_{\ga''}^2 V +  V \nbg V \cdot \beta'(\nu) 
	\end{align*}
	
recalling  \eqref{eq:three identities - mat H-aniso} and  \eqref{eq:three identities - mat nu}. 
	\qed
\end{proof}

Because the case $\beta \equiv 1$ is particularly relevant we state the corresponding evolution equations separately:
\begin{corollary}
	\label{lemma:aMCF evolution equations}
	Let $\Ga[X]$ be a sufficiently smooth solution of the anisotropic mean curvature flow \eqref{eq:anisotropicMCF - problem} with $V=-H_\gamma$. Then the normal vector and normal velocity satisfy
	\begin{subequations}
		\label{eq:evolution eq - aMCF}
		\begin{align}
			\mat \nu = &\ \nbg \cdot \Big( \ga''(\nu) \nbg \nu \Big) + |A|_{\ga''}^2 \, \nu , \label{eq:evolution eq - aMCF(a)}\\		
			\mat V = &\ \nbg \cdot \Big( \ga''(\nu) \nbg V \Big) + |A|_{\ga''}^2 V .  \label{eq:evolution eq - aMCF(b)}
		\end{align}
	\end{subequations}
\end{corollary}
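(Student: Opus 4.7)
The plan is to obtain this result as an immediate specialization of Lemma~\ref{lemma:aMCF evolution equations - beta}. With the kinetic coefficient $\beta \equiv 1$, the anisotropic mean curvature flow law $\beta(\nu)V = -H_\gamma$ reduces exactly to $V = -H_\gamma$, so the hypothesis of the previous lemma is satisfied for this evolving surface.

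Next, I would substitute $\beta \equiv 1$ into both evolution equations \eqref{eq:evolution eq - modified aMCF}. Since $\beta$ is constant, its derivative $\beta'(\nu)$ vanishes identically in $\R^{d+1}$. Therefore the terms $V \nbg \nu \, \beta'(\nu)$ in \eqref{eq:evolution eq - modified aMCF(a)} and $V \nbg V \cdot \beta'(\nu)$ in the second equation drop out, and the left-hand sides reduce to $\mat \nu$ and $\mat V$ respectively. This directly yields \eqref{eq:evolution eq - aMCF(a)} and \eqref{eq:evolution eq - aMCF(b)}.

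There is no real obstacle here since the statement is a direct corollary: once Lemma~\ref{lemma:aMCF evolution equations - beta} is in hand (which itself relied on the key identities \eqref{eq:three identities - H and nu-aniso}, \eqref{eq:three identities - mat nu} and \eqref{eq:three identities - mat H-aniso}), nothing further is required beyond observing the vanishing of $\beta'$. Alternatively, one could give a self-contained derivation by repeating the short computation of Lemma~\ref{lemma:aMCF evolution equations - beta} but replacing $\beta(\nu)V = -H_\gamma$ by $V = -H_\gamma$ from the outset; then $\mat \nu = -\nbg V = \nbg H_\gamma$ gives \eqref{eq:evolution eq - aMCF(a)} via \eqref{eq:three identities - H and nu-aniso}, while $\mat V = -\mat H_\gamma$ combined with \eqref{eq:three identities - mat H-aniso} gives \eqref{eq:evolution eq - aMCF(b)}. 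The corollary-based route is the cleanest.
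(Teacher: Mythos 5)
Your proposal is correct and matches the paper's own route: the corollary is obtained exactly by setting $\beta \equiv 1$ in Lemma~\ref{lemma:aMCF evolution equations - beta}, so that $\beta' \equiv 0$ and the extra terms $V \nbg \nu\, \beta'(\nu)$ and $V \nbg V \cdot \beta'(\nu)$ vanish. Your alternative self-contained derivation via \eqref{eq:three identities - H and nu-aniso} and \eqref{eq:three identities - mat H-aniso} is also sound, but the corollary-based argument is precisely what the paper intends.
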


\begin{remark}
\label{remark:isotropic MCF}
	In the isotropic case Lemma \ref{lemma:aMCF evolution equations} simplifies to the evolution equations of Huisken \cite{Huisken1984}:
	\begin{align*}
		\mat \nu = &\ \laplace_\Ga \nu  + |A|^2 \, \nu , \\
		\mat H = &\ \laplace_\Ga H + |A|^2 H .
	\end{align*}
\end{remark}

\begin{remark}
	Note that since we have $|A|_{\ga''}^2 = \nbg \nu \cdot ( \ga''(\nu) \nbg \nu ) \geq 0$, via the maximum principle for \eqref{eq:evolution eq - aMCF(b)} we conclude from $H_\ga(\cdot,0) \geq 0$ that $H_\ga(\cdot,t) \geq 0$ for all $t > 0$.
\end{remark}

\subsection{\it Weak formulation of the coupled system}
\label{section:weak formulations}

The weak formulation of the coupled system for anisotropic mean curvature flow \eqref{eq:anisotropicMCF - problem} with \eqref{eq:evolution eq - modified aMCF} reads: Given an  initial surface $\Ga^0$, find functions
$X \colon \Gamma^0 \times [0,T] \rightarrow \mathbb R^{d+1}$, $\nu \colon G_T \rightarrow \mathbb R^{d+1}$, $V \colon G_T \rightarrow \mathbb R$ such that
\begin{subequations}
	\label{eq:coupled system - weak - modified aMCF}
	\begin{align}
	         \label{eq:coupled system - weak - modified aMCF - ODE}
		&\  \partial_t X = v \circ X , \\
		\label{eq:coupled system - weak - modified aMCF - velocity law}
		&\ v = V \nu , \\
		\label{eq:coupled system - weak - modified aMCF - nu}
		&\ \int_{\Ga[X]} \!\!\! \beta(\nu) \, \mat \nu  \cdot \vphi^\nu + \int_{\Ga[X]} \!\!\! \ga''(\nu) \nbg \nu : \nbg \vphi^\nu \nonumber \\
		&\ \qquad\qquad\qquad = \int_{\Ga[X]} \!\!\! |A|_{\ga''}^2 \, \nu \cdot \vphi^\nu 
		+ \int_{\Ga[X]} \!\!\! V \nbg \nu \beta'(\nu) \cdot \vphi^\nu, \\
		\label{eq:coupled system - weak - modified aMCF - V}
		&\ \int_{\Ga[X]} \!\!\! \beta(\nu) \, \mat V \vphi^V + \int_{\Ga[X]} \!\!\! \ga''(\nu) \nbg V \cdot \nbg \vphi^V \nonumber \\
		&\ \qquad\qquad\qquad = \int_{\Ga[X]} \!\!\! |A|_{\ga''}^2 V \vphi^V 
		+ \int_{\Ga[X]} \!\!\! V \nbg V \cdot \beta'(\nu) \vphi^V, 
		\end{align}
\end{subequations}
for all $\vphi^\nu \in H^1(\Ga[X])^{d+1}$ and $\vphi^V \in H^1(\Ga[X])$.
The system is endowed with geometrically compatible initial data: $X(\cdot,0) = \mbox{id}_{\Gamma^0}$, $\nu(\cdot,0) = \nu^0$ being the outward normal vector field of $\Ga^0$, and $V(\cdot,0) = V^0 = - \tfrac{1}{\beta(\nu^0)} H_\ga^0 = - \tfrac{1}{\beta(\nu^0)} \nbg \cdot \big( \ga'(\nu^0) \big)$.


\section{Evolving surface finite element discretization}
\label{section:ESFEM}

For the spatial semi-discretization of the system  \eqref{eq:coupled system - weak - modified aMCF} we will use the evolving surface finite element method (ESFEM) \cite{Dziuk88,DziukElliott_ESFEM}.  We use curved simplicial finite elements and basis functions defined by continuous piecewise polynomial basis functions of degree~$k$ on triangulations, as defined in \cite[Section~2]{Demlow2009}, \cite{highorderESFEM}, and \cite{EllRan21}. 
The description below is almost verbatim to \cite{KLLP2017,MCF,MCFdiff}.

\subsection{\it Evolving surface finite elements}
The given smooth initial surface $\Ga^0$ is triangulated by an admissible family of triangulations $\mathcal{T}_h$ of decreasing maximal element diameter $h$; see \cite{DziukElliott_ESFEM,EllRan21} for the notion of an admissible triangulation, which includes quasi-uniformity and shape regularity. For a momentarily fixed $h$, $\bfx^0$ denotes the vector in $\R^{(d+1)\dof}$ that collects all nodes $p_j$ $(j=1,\dots,\dof)$ of the initial triangulation. By piecewise polynomial interpolation of degree $k$, the nodal vector defines an approximate surface $\Ga_h^0$ that interpolates $\Ga^0$ in the nodes $p_j$. Associated with these nodes are piecewise polynomial
basis function $\phi_1,\ldots,\phi_N$. \\
Let us next fix a  mapping $\bfx \colon [0,T] \to \R^{(d+1)\dof}$, such that $\bfx(0) = \bfx^0$. We associate with $\bfx$ the family of  discrete surfaces
$$
\Ga_h[\bfx(t)]=\Ga[X_h(\cdot,t)] = \{ X_h(p_h,t) \,:\, p_h \in \Ga_h^0 \}, \quad 0 \leq t \leq T
$$
parametrized by  the maps $X_h(p_h,t) = \sum_{j=1}^\dof x_j(t) \, \phi_j(p_h), \, p_h \in \Ga_h^0$. 
We abbreviate $\bfx(t)$ to $\bfx$ when the dependence on $t$ is clear from the context. \\
For $t \in [0,T]$ we  introduce globally continuous finite element \emph{basis functions} $\phi_i[\bfx(t)] \colon \Ga_h[\bfx(t)]\to\R, i=1,\dotsc,\dof$, 
such that on every element their pullback to the reference triangle is a polynomial of degree $k$, which satisfies $\phi_i[\bfx(t)](x_j(t)) = \delta_{ij}$ for all $i,j = 1, \dotsc, \dof$.
These functions span the finite element space 
\begin{equation*}
	S_h[\bfx(t)] = S_h(\Ga_h[\bfx(t)])=\spn\big\{ \phi_1[\bfx(t)], \phi_2[\bfx(t)], \dotsc, \phi_\dof[\bfx(t)] \big\} 
\end{equation*}
and satisfy the following transport property, see  \cite{DziukElliott_ESFEM},
\begin{equation} \label{eq:transport}
\frac\d{\d t} \big( \phi_i[\bfx(t)](X_h(p_h,t)) \big) =0 \qquad \forall p_h \in \Ga_h^0 .
\end{equation}
For a finite element function $u_h\in S_h[\bfx(t)]$, the tangential gradient $\nabla_{\Ga_h[\bfx(t)]}u_h$ is defined piecewise on each element. Furthermore,
we define the discrete velocity  $v_h(\cdot,t) \colon \Ga_h[\bfx(t)]  \to  \R^{d+1}$ by the relation
$$
 \partial_t X_h(p_h,t) = v_h(X_h(p_h,t),t). 
$$
In view of \eqref{eq:transport} we have $v_h(x,t) = \sum_{j=1}^\dof \dot x_j(t) \, \phi_j[\bfx(t)](x), x \in \Ga_h[\bfx(t)]$, where the dot denotes the time derivative $\d/\d t$. In particular,
the discrete velocity $v_h(\cdot,t)$ is in the finite element space $S_h[\bfx(t)]$, with nodal vector $\bfv(t)=\dot\bfx(t)$. Finally, the \emph{discrete material derivative} of a finite element function $u_h(x,t)$ with nodal values $u_j(t)$ is
$$
\mat_h u_h(x,t) = \frac{\d}{\d t} u_h(X_h(p_h,t),t) = \sum_{j=1}^\dof \dot u_j(t)  \phi_j[\bfx(t)](x)  \quad\text{at}\quad x=X_h(p_h,t).
$$
Suppose that $X \colon \Gamma^0 \times [0,T] \rightarrow \mathbb R^{d+1}$ parametrizes a family of evolving surfaces $(\Gamma(t))_{t \in [0,T]}$. Defining $x^* \colon  [0,T] \to \R^{(d+1)N}, x_j^*(t):=X(p_j,t)$, we then call  $\Ga_h[\bfx^*(t)]=\Ga[X^*_h(\cdot,t)]$   the \emph{interpolating surface}
of $\Ga(t)$. Thus, $\Ga_h[\bfx^*(t)] = \{ X^*_h(p_h,t) \,:\, p_h \in \Ga_h^0 \}$, where   $X_h^*(\cdot,t)= \sum_{j=1}^\dof x^*_j(t) \, \phi_j$. In this case,  the discrete velocity is given by
\begin{equation}
\label{eq:definition exacts velo interp}
	v_h^*(x,t)= \sum_{j=1}^\dof v^*_j(t) \, \phi_j[\bfx^*(t)](x) \quad \mbox{ with } v^*_j(t)= \dot x^*_j(t)= \partial_t X(p_j,t).
\end{equation}
Note that   the evolving triangulated surface $\Gamma_h[\bfx^*(t)]$  associated with $X_h^*(\cdot,t)$ is   admissible for all $t\in [0,T]$, provided that  the flow map $X$ is sufficiently regular and
$h$ is sufficiently small.  

\subsection{\it Lifts and Ritz map}
\label{section:lifts}

Suppose again  that $X \colon \Gamma^0 \times [0,T] \rightarrow \mathbb R^{d+1}$  parametrizes a family of evolving surfaces $(\Gamma(t))_{t \in [0,T]}$ with corresponding interpolating surface $\Ga_h[\bfx^*]$. Let us denote  by $d(\cdot,t)$ the oriented distance function to $\Gamma(t)$ and
abbreviate $U(t):= \lbrace x \in \mathbb R^{d+1} \, : \, | d(x,t) | < \delta \rbrace$. It can be shown that 
for each  $x \in U(t)$ there exists a unique point    $x^\ell \in \Gamma[X(\cdot,t)]$ satisfying
\begin{equation} \label{eq:projection}
	x = x^\ell + d(x,t) \nu_{\Ga[X]}(x^\ell,t) 
\end{equation}
provided that $\delta$ is sufficiently small. For a function $w_h\colon  \Gamma_h[\bfx^*(t)] \rightarrow \mathbb R$ we define its lift $w_h^\ell\colon  \Gamma[X(\cdot,t)] \rightarrow \mathbb R$ by $w_h^{\ell}(x^\ell) = w_h(x)$, where $x$ and $x^{\ell}$ are connected via \eqref{eq:projection}. 
It is shown in \cite[Lemma~4.2]{DziukElliott_acta}, that there is a constant $c>0$ such that for all $h$ sufficiently small and all $t \in [0,T]$ we have the bounds
\begin{equation}
	\label{eq:norm equivalence}
	\begin{gathered}
		c^{-1} \|w_h\|_{L^2(\Ga_h[\xs])} \leq \|w_h^\ell\|_{L^2(\Ga[X])} \leq c\|w_h\|_{L^2(\Ga_h[\xs])} , \\
		c^{-1} \|w_h\|_{H^1(\Ga_h[\xs])} \leq \|w_h^\ell\|_{H^1(\Ga[X])} \leq c\|w_h\|_{H^1(\Ga_h[\xs])} .
	\end{gathered}
\end{equation}
The inverse lift assigns to a function $u\colon  \Gamma[X(\cdot,t)] \rightarrow \mathbb R$ the function $u^{-\ell} \colon \Ga_h[\xs\t] \to \R$ such that $u^{-\ell}(x)= u(x^\ell)$. It is shown in \cite[Proposition 2.3]{Demlow2009} that
\begin{equation} \label{eq:nudif}
| \nu^{-\ell} - \nu_{\Ga_h[\bfx^*]} | \leq c h^k,
\end{equation}
where $\nu_{\Ga_h[\bfx^*]}$ denotes the unit normal to the discrete surface $\Ga_h[\bfx^*]$. 
For more details on the lift $\,^\ell$, see \cite{Dziuk88,DziukElliott_acta,Demlow2009}. \\
Let us next introduce a Ritz map, which is adjusted to our anisotropic setting.  To begin, note that
\eqref{eq:convex1} together with \eqref{eq:nudif} imply that
\begin{displaymath}
w \cdot \ga''(\nu^{-\ell} ) w \geq \frac{c_0}{4} | w|^2 \qquad \mbox{ for all } w \in \mathbb R^{d+1}, w \cdot \nu_{\Ga_h[\bfx^*]}=0
\end{displaymath}
provided that $0< h \leq h_0$ and $h_0$ is small enough. This allows us to make the following definition.

\begin{definition} Given $u \in H^1(\Ga[X])$ we denote by $u_h^*  \in S_h[\xs]$ the unique solution of 
	\begin{equation}
		\label{eq:Ritzmap}
		\begin{aligned}
			&\ \int_{\Ga_h[\xs]} \!\!  u_h^* \, \vphi_h + \int_{\Ga_h[\xs]} \!\!\!\! \ga''(\nu^{-\ell}) \, \nb_{\Ga_h[\xs]} u^*_h \cdot \nb_{\Ga_h[\xs]} \vphi_h \\
			= &\ \int_{\Ga[X]} \!\!\!\! u \vphi_h^\ell + \int_{\Ga[X]} \!\!\!\! \ga''(\nu) \, \nb_{\Ga[X]} u \cdot \nb_{\Ga[X]} \vphi_h^\ell ,
		\end{aligned}
	\end{equation}
for all $\vphi_h \in S_h[\xs]$. 
\end{definition}
A similar Ritz map was  used in \cite[Definition~3.1]{KPower_quasilinear}, see also the variants \cite[Definition~6.1]{DziukElliott_L2}, \cite[Definition~6.1]{highorderESFEM}, and \cite[Definition~3.6]{EllRan21}.

It can be shown that (cf.~\cite[Theorem~3.1--3.2]{KPower_quasilinear} with a scalar non-linearity):
\begin{eqnarray}  
	\label{eq:ritzest}
	\Vert (u^*_h)^\ell - u \Vert_{L^2(\Ga[X])} + h \Vert (u^*_h)^\ell - u \Vert_{H^1(\Ga[X])} \leq C h^{k+1},  \\
	\Vert (\partial^\bullet_h u^*_h)^\ell - \partial^\bullet u \Vert_{L^2(\Ga[X])} + h \Vert (\partial^\bullet_h u^*_h)^\ell - \partial^\bullet u \Vert_{H^1(\Ga[X])} \leq C h^{k+1}.  \label{eq:ritzesttime}
\end{eqnarray}


\section{Discretization and main result}
\label{section:discretization and main result}

The evolving surface finite element spatial semi-discretization of the weak coupled parabolic system for the anisotropic mean curvature flow  \eqref{eq:coupled system - weak - modified aMCF} reads: 
Find the  nodal vector $\bfx(t)\in \R^{(d+1)\dof}$ and 
$\nu_h(\cdot,t)\in S_h[\bfx(t)]^{d+1}$, $V_h(\cdot,t)\in S_h[\bfx(t)]$ such that
\begin{subequations}
	\label{eq:semidiscretization - modified aMCF}
	\begin{align}
	        &\ 	\partial_t X_h(p_h,t) = v_h(X_h(p_h,t),t), \qquad p_h\in\Ga_h^0, \\
		&\ v_h = \Ih \Big( V_h \, \nu_h \Big) , \\
		\label{eq:semidiscretization - modified aMCF - nu}
		&\ \int_{\Ga_h[\bfx]} \!\!\! \beta(\nu_h) \mat_h \nu_h \cdot \vphi_h^\nu + \int_{\Ga_h[\bfx]} \!\!\gamma''(\nu_h)  \, \nb_{\Ga_h[\bfx]} \nu_h : \nb_{\Ga_h[\bfx]} \vphi_h^\nu \nonumber \\
		&\ \qquad\qquad\qquad = \int_{\Ga_h[\bfx]} \!\!\! |A_h|_{\gamma''_h}^2 \nu_h \cdot \vphi_h^\nu 
		+ \int_{\Ga_h[\bfx]} \!\!\! V_h \nb_{\Ga_h[\bfx]} \nu_h \beta'(\nu_h) \cdot \vphi_h^\nu , \\
		\label{eq:semidiscretization - modified aMCF - V}
		&\ \int_{\Ga_h[\bfx]} \!\!\! \beta(\nu_h)  \mat_h V_h \vphi_h^V + \int_{\Ga_h[\bfx]} \!\!\! \gamma''(\nu_h)  \nb_{\Ga_h[\bfx]} V_h \cdot \nb_{\Ga_h[\bfx]} \vphi_h^V \nonumber \\
		&\ \qquad\qquad\qquad = \int_{\Ga_h[\bfx]} \!\!\! |A_h|_{\gamma''_h}^2 V_h \vphi_h^V 
		+ \int_{\Ga_h[\bfx]} \!\!\! V_h \nb_{\Ga_h[\bfx]} V_h \cdot \beta'(\nu_h) \, \vphi_h^V , 
			\end{align}
\end{subequations}
for all $\vphi_h^V \in S_h[\bfx(t)]$ and $\vphi_h^\nu \in S_h[\bfx(t)]^{d+1}$. Here, we have abbreviated 
\begin{displaymath}
A_h = \half \Big(\nb_{\Ga_h[\bfx]} \nu_h + (\nb_{\Ga_h[\bfx]} \nu_h)^T\Big) \quad \mbox{ and } \quad |A_h|_{\ga''_h}^2 = A_h : \ga''(\nu_h)  A_h.
\end{displaymath}
 Furthermore, $\Ih f:= \sum_{j=1}^N f(x_j(t)) \phi_j[\bfx(t)]$ for a function $f: \Ga_h[\bfx(t)] \rightarrow \mathbb R$. Finally, the initial data  $\nu_h(\cdot,0)$, and $V_h(\cdot,0)$ are the Lagrange interpolations of  $\nu^0$, and $V^0 = - \tfrac{1}{\beta(\nu^0)} H_\ga^0 = - \tfrac{1}{\beta(\nu^0)} \nbg \cdot \big( \ga'(\nu^0) \big)$, respectively.

\medskip
\noindent
Our main result of this paper reads:

\begin{theorem}
	\label{theorem: error estimates - modified aMCF}
	Suppose that the  system \eqref{eq:coupled system - weak - modified aMCF}  admits a sufficiently smooth, regular solution $X \colon \Gamma^0 \times [0,T] \to \mathbb R^{d+1}$ ($d = 1,2,3$) with unit normal $\nu$ and normal velocity $V$. Consider
	the semi-discrete problem \eqref{eq:semidiscretization - modified aMCF}, discretized by evolving surface finite elements of polynomial degree $k \geq 2$. 
	Then there exists a constant $h_0 > 0$ such that for all mesh sizes $h \leq h_0$ \eqref{eq:semidiscretization - modified aMCF} has a unique solution $X_h \colon \Gamma^0_h \times [0,T] \to \mathbb R^{d+1}$, $\nu_h(\cdot,t)\in S_h[\bfx(t)]^{d+1}$, $V_h(\cdot,t)\in S_h[\bfx(t)]$ and the following error bounds hold:
	\begin{align*}
	 \max_{t \in [0,T]}    \|X_h^\ell(\cdot,t) - X(\cdot,t)\|_{H^1(\Ga^0)^{d+1}} \leq &\ Ch^k , \\
	\max_{t \in [0,T]}  \|\nu_h^L(\cdot,t) - \nu(\cdot,t)\|_{H^1(\Ga[X(\cdot,t)])^{d+1}} \leq &\ C h^k, \\
	\max_{t \in [0,T]} \|V_h^L(\cdot,t) - V(\cdot,t)\|_{H^1(\Ga[X(\cdot,t)])} \leq &\ C h^k. 
	\end{align*}
	Here, we have associated with a function  $w_h=\sum_{j=1}^N w_j \phi_j[\bfx(t)] \in  S_h[\bfx(t)] $   the function $w_h^L: \Gamma[X(\cdot,t)] \rightarrow \mathbb R$ via $w_h^L = (\widehat w_h)^\ell$, where $\widehat w_h=\sum_{j=1}^N w_j \phi_j[{\bfx ^*}(t)]  \in S_h[{\bfx^*} (t)]$, see \cite{MCF}.
	The constant $C$ is independent of $h$, but depends on bounds of higher derivatives of the solution $(X,\nu,V)$  and exponentially on $T$.
\end{theorem}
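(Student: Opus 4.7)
The plan is to follow the matrix--vector framework pioneered in \cite{MCF} and further developed in \cite{KLL_Willmore,MCFdiff,MCF_generalised}. I would first rewrite the semi-discrete system \eqref{eq:semidiscretization - modified aMCF} as a coupled nonlinear ODE for the nodal vectors of $\bfx$, $\nu_h$, and $V_h$. In this formulation the finite element integrals become a weighted mass matrix $\bfM(\bfx)$ (with weight $\beta(\nu_h)$) and an anisotropic stiffness matrix $\bfA(\bfx)$ (with tensor $\gamma''(\nu_h)$). Evaluating the continuous weak system \eqref{eq:coupled system - weak - modified aMCF} at the Ritz/interpolation nodes yields a formally identical algebraic system for the reference vectors $\bfx^\ast$, $\nu_h^\ast$, $V_h^\ast$, with defects $\bfd_\nu$, $\bfd_V$, $\bfd_\bfx$ on the right-hand side. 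Subtracting produces the error equations for $\ex = \bfx - \bfx^\ast$ and the analogous $\bfe_\nu$, $\bfe_V$.

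The analysis then splits into three blocks. For \emph{consistency}, using the anisotropic Ritz map \eqref{eq:Ritzmap} together with its approximation bounds \eqref{eq:ritzest}--\eqref{eq:ritzesttime}, the normal-lift estimate \eqref{eq:nudif}, and nodal interpolation of the velocity identity $v = V \nu$, I would show $\|\bfd_\nu\|_\star + \|\bfd_V\|_\star + \|\bfd_\bfx\|_\star \leq C h^k$, where $\|\cdot\|_\star$ denotes a dual norm with respect to the coercive anisotropic bilinear form. For the \emph{energy estimate}, test the $\nu$-error equation with $\mat_h \bfe_\nu$ and the $V$-error equation with $\mat_h \bfe_V$, use the uniform ellipticity \eqref{eq:convex1} of $\gamma''$ and positivity \eqref{eq:betacond1} of $\beta$ to extract the principal $H^1$-semi-norms, and handle the trilinear terms $|A_h|^2_{\gamma''_h}$ via Sobolev embedding ($d\leq 3$) combined with the $W^{1,\infty}$ regularity of the exact $\nu$. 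The differences between the bilinear forms on $\Ga_h[\bfx(t)]$ and on $\Ga_h[\bfx^\ast(t)]$ contribute extra terms controlled by $\|\ex\|_{H^1}$, which are absorbed via Gronwall's inequality. For the \emph{geometric error}, integrating the difference of the discrete ODE $\partial_t X_h = \Ih(V_h \nu_h)$ and the continuous ODE $\partial_t X = V \nu$ in time, the bounds on $\bfe_\nu$ and $\bfe_V$ yield $\|\ex(t)\|_{H^1} \leq C h^k$.

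The main technical obstacle is to guarantee, along the \emph{discrete} trajectory, the smallness $|\nu_h(\cdot,t) - \nu^{-\ell}(\cdot,t)| < \delta$ required in \eqref{eq:convex1} for the uniform coercivity of $\gamma''(\nu_h)$, together with uniform non-degeneracy of the evolving surface $\Ga_h[\bfx(t)]$, so that $\bfM(\bfx)$ stays well-conditioned. I would therefore run a bootstrap on the maximal interval $[0,t^\ast]$ on which the a priori bound $\|\bfe_\nu\|_{L^\infty} + \|\ex\|_{W^{1,\infty}} \leq h^{\kappa}$ holds, for some fixed $\kappa \in (0,k-d/2)$. Under this hypothesis the three blocks above apply and give an $H^1$ error of order $h^k$; combined with the finite element inverse estimate $\|\cdot\|_{W^{1,\infty}} \leq C h^{-d/2}\|\cdot\|_{H^1}$, which is effective since $k\geq 2$ and $d\leq 3$ imply $k-d/2 > 0$, this improves the $W^{1,\infty}$ bound by a positive power of $h$ beyond the a priori one. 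A standard continuation argument extends the estimate to all of $[0,T]$, and passing from the nodal errors to the lifts $X_h^\ell$, $\nu_h^L$, $V_h^L$ introduces only higher-order lifting errors, which yields the claimed bounds.
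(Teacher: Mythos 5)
Your proposal follows essentially the same route as the paper: matrix--vector reformulation with solution-dependent mass and stiffness matrices, defect equations for the interpolated position and the anisotropic Ritz maps of $(\nu,V)$ with $O(h^k)$ consistency, an energy estimate obtained by testing the error equation with the discrete material derivative of the error, a bootstrap on the maximal interval where a $W^{1,\infty}$ smallness bound (the paper uses $h^{1/3}$) guarantees coercivity of $\gamma''(\nu_h)$ and positivity of $\beta(\nu_h)$, closed by Gronwall, the inverse estimate $k-d/2\geq 1/2$, and lifting. One small caveat: since you test with $\mat_h e_u$, the defect $d_u$ must be controlled in $L^2(\Ga_h[\bfx^*])$ (as the paper proves in its consistency lemma), not merely in a dual norm of the anisotropic bilinear form, which would only pair with testing by $e_u$ itself.
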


Using the theory of analytic semigroups (see, for example, \cite{Lunardi2013}) it is possible
to show that \eqref{eq:anisolaw} possesses, locally in time, a unique smooth
solution provided that $\gamma$, $\beta$ and the initial data are smooth enough. For proofs of the local-in-time existence of solutions of related problems in parabolic H\"older spaces, we also refer the reader to \cite{GigaG1992,ATW1993,MikulaS2001}, and to the discussion at the end of Chapter 1 in \cite{Giga_2006}.

Regularity assumptions  for Theorem~\ref{theorem: error estimates - modified aMCF} are the following: We assume $X(\cdot,t) \in  H^{k+1}(\Ga^0)$,
$v(\cdot,t) \in H^{k+1}(\Ga(X(\cdot,t)))$, and for $u=(\nu,V)$ we require $u(\cdot,t)$, $\mat u(\cdot,t) \in W^{k+1,\infty}(\Ga(X(\cdot,t)))^{d+2}$ with bounds that are uniform in $t\in[0,T]$.

\section{Error analysis}
\label{sec:erroranalysis}

\subsection{\it Matrix--vector formulation}
\label{section:matrix_vector}

Let us write $X_h(\cdot,t) = \sum_{j=1}^\dof x_j(t) \, \phi_j$ with  $\bfx(t) = (x_j(t))_{j=1}^N$ and collect the nodal values of   $\nu_h(\cdot,t)\in S_h[\bfx(t)]^{d+1}$, $V_h(\cdot,t)\in S_h[\bfx(t)]$ in the column
vectors $\bfn=(n_j)_{j=1}^N, \bfV=(V_j)_{j=1}^N$. Furthermore, we set
\begin{displaymath}
	\bfu := \begin{pmatrix} \bfn \\ \bfV \end{pmatrix} \in \R^{(d+2)\dof} , \quad \mbox{ as well as } \quad  \bfV \bullet \bfn = (V_j n_j)_{j=1}^N.
\end{displaymath}
We may then express \eqref{eq:semidiscretization - modified aMCF}  in matrix-vector form as
\begin{subequations}
\label{eq:matrix-vector form - modified aMCF}
	\begin{align}
	\label{eq:matrix-vector form - modified aMCF - c}
		\dot \bfx = &\  \bfv  , \\
		\label{eq:matrix-vector form - modified aMCF - a}
		\bfv = &\ \bfV \bullet \bfn , \\
		\label{eq:matrix-vector form - modified aMCF - b}
		\bfM^{[d+2]}(\bfx,\bfu) \dot{\bfu} + \bfA^{[d+2]}(\bfx,\bfu) \bfu = &\  \bffb(\bfx,\bfu) ,
	\end{align} 
\end{subequations}
where the solution-dependent mass  and stiffness matrices are given by
\begin{align*}
	\bfM(\bfx,\bfu)|_{ij} &= \int_{\Ga_h[\bfx]} \!\!\! \beta(\nu_h) \, \phi_i[\bfx] \phi_j[\bfx] , \\
	\bfA(\bfx,\bfu)|_{ij} &= \int_{\Ga_h[\bfx]} \!\!\! \gamma''(\nu_h) \, \nbgh \phi_i[\bfx]  \cdot \nbgh \phi_j[\bfx] 
\end{align*}
for $i,j=1,\ldots,N$, and we have abbreviated 
\begin{displaymath}
(\bfM^{[d+2]},\bfA^{[d+2]})(\bfx,\bfu)= I_{d+2} \otimes (\bfM,\bfA)(\bfx,\bfu)
\end{displaymath}
where $I_{d+2}$ is the identity matrix. Also  $\bff(\bfx,\bfu) = \big( \bff_1(\bfx,\bfu) , \bff_2(\bfx,\bfu) \big)^T$ with
\begin{align*}
	\bffb_1(\bfx,\bfu)|_{j + (\ell - 1)N} 
	= &\  \int_{\Ga_h[\bfx]} \!\!\! |A_h|_{\gamma''_h}^2 \nu_{h,\ell} \, \phi_j[\bfx] 
	+ \int_{\Ga_h[\bfx]} \!\!\! V_h (\nb_{\Ga_h[\bfx]} \nu_h \beta'(\nu_h))_\ell \, \phi_j[\bfx] , \\
	\bffb_2(\bfx,\bfu)|_{j } 
	= &\  \int_{\Ga_h[\bfx]} \!\!\! |A_h|_{\gamma''_h}^2 V_h \, \phi_j[\bfx] 
	+ \int_{\Ga_h[\bfx]} \!\!\! V_h \nb_{\Ga_h[\bfx]} V_h \cdot \beta'(\nu_h) \, \phi_j[\bfx] , 
\end{align*}
for $j = 1, \dotsc, \dof$ and $\ell=1,\dotsc,d+1$. 

Note that the above matrix--vector formulation \eqref{eq:matrix-vector form - modified aMCF} is particularly close to those for: mean curvature flow \cite[equation~(3.4)]{MCF} (basic structure), for generalised mean curvature flow \cite[equation~(3.4)]{MCF_generalised} (solution-dependent mass matrix $\bfM(\bfx,\bfu)$), and diffusion coupled to mean curvature flow \cite[equation~(6.3)]{MCFdiff} (solution-dependent stiffness matrix $\bfA(\bfx,\bfu)$). We will exploit these similarities throughout the numerical analysis of this method.

\subsection{\it Consistency bounds}

Let us define $X_h^*(\cdot,t)= \sum_{j=1}^\dof x^*_j(t) \, \phi_j$, where $x^*_j(t) = X(p_j,t)$. Furthermore, we denote by $\nu^*_h \in S_h[\xs]^{d+1}, V_h^* \in S_h[\xs]$ the Ritz maps \eqref{eq:Ritzmap} of $\nu,V$, whose nodal
values are collected in $\bfn^*$ and $\bfV^*$ respectively. As above we write
\begin{displaymath}
	 \bfu^*= \begin{pmatrix} \bfn^* \\ \bfV^* \end{pmatrix} \in \R^{(d+2)\dof}, \quad \mbox{ as well as } \quad \bfV^* \bullet \bfn^* = (V^*_j n^*_j)_{j=1}^N
\end{displaymath}
and observe that  $(\bfx^*,\bfn^*,\bfV^*)$ satisfies the system 
\begin{subequations}
	\label{eq:defect definition - modified aMCF}
	\begin{align}
	    \dotxs = &\ \vs , \\
		\vs =&\ \bfV^* \bullet \bfn^* + \dv , \\
		\bfM^{[d+2]}(\xs,\us) \dotus + \bfA^{[d+2]}(\xs,\us) \us = &\ \bffb(\xs,\us) + \bfM(\xs)\du . \label{eq:defect definition - modified aMCF-c}
	\end{align}
\end{subequations}
for suitably defined \emph{defects} $d_v  = \sum_{j=1}^N d_{v,j} \phi_j[\bfx^*], d_u  = \sum_{j=1}^N d_{u,j} \phi_j[\bfx^*]$ and
with the standard mass matrix $\displaystyle \bfM(\xs)|_{ij} = \int_{\Gamma_h[\xs]} \phi_i[\xs] \phi_j[\xs]$.

\begin{lemma} We have
	\label{lemma:consistency - modified aMCF}
	\begin{align}
	\max_{t \in [0,T]} 	\|d_v(\cdot,t)\|_{H^1(\Ga_h[\xs(t)])} + \max_{t \in [0,T]} 	\|d_u(\cdot,t)\|_{L^2(\Ga_h[\xs(t)])}  \leq \ C h^k . \label{eq:dvuest} 
	\end{align}
\end{lemma}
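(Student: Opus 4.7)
The plan is to bound the two defects separately: $d_v$ reduces to a nodal interpolation-of-a-product error, while $d_u$ is the Ritz-map residual in the nonlinear semi-discrete evolution for $(\nu,V)$.

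\textbf{Estimate of $d_v$.} Since the nodes of $\Gamma_h[\xs(t)]$ are precisely $x_j^*(t)=X(p_j,t)\in \Gamma[X(\cdot,t)]$, the identity $v_j^* = \partial_t X(p_j,t) = V(x_j^*)\,\nu(x_j^*)$ at every node gives $v_h^* = \widetilde I_h\bigl((V\nu)^{-\ell}\bigr)$. Hence
\begin{equation*}
d_v = \widetilde I_h\bigl((V^{-\ell}-V_h^*)\,\nu^{-\ell} + V_h^*\,(\nu^{-\ell}-\nu_h^*)\bigr).
\end{equation*}
Using $H^1$-stability of $\widetilde I_h$ on quasi-uniform meshes, $L^\infty$-bounds on $V_h^*$ and $\nu^{-\ell}$ (inherited from the assumed $W^{k+1,\infty}$-regularity via \eqref{eq:ritzest} and standard embeddings/inverse inequalities), and \eqref{eq:ritzest} itself on each of the factors $V^{-\ell}-V_h^*$ and $\nu^{-\ell}-\nu_h^*$, one concludes $\|d_v\|_{H^1(\Gamma_h[\xs])}\leq Ch^k$.

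\textbf{Estimate of $d_u$.} The $L^2$-bound is obtained by duality. Testing \eqref{eq:defect definition - modified aMCF-c} against an arbitrary $\varphi_h\in S_h[\xs]^{d+2}$ and subtracting the weak formulation \eqref{eq:coupled system - weak - modified aMCF - nu}--\eqref{eq:coupled system - weak - modified aMCF - V} satisfied by $(\nu,V)$ when tested against the lift $\varphi_h^\ell$, the resulting residual decomposes into four contributions: (i) geometric errors from integrating on $\Gamma_h[\xs]$ instead of $\Gamma[X]$, controlled by $Ch^{k+1}\|\varphi_h\|_{L^2}$ via the estimates of \cite{Demlow2009} and \eqref{eq:nudif}; (ii) material-derivative commutator terms handled by \eqref{eq:ritzesttime}; (iii) bilinear-form consistency, which the anisotropic Ritz map \eqref{eq:Ritzmap} is precisely designed to cancel to leading order; and (iv) nonlinear coefficient-replacement errors in $\beta(\nu)$, $\beta'(\nu)$, $\gamma''(\nu)$ and $|A|_{\gamma''}^2(\nu)$ when these are evaluated at $\nu_h^*$ rather than at $\nu$. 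Each piece is bounded by $Ch^{k+1}\|\varphi_h\|_{L^2(\Gamma_h[\xs])}$; taking the supremum over $\varphi_h$ yields $\|d_u\|_{L^2(\Gamma_h[\xs])}\leq Ch^{k+1}\leq Ch^k$.

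\textbf{Main obstacle.} The delicate step is (iv). The Lipschitz bounds \eqref{eq:gamma2}--\eqref{eq:gamma3} and \eqref{eq:betacond2}--\eqref{eq:betacond3} only apply in the annulus $\tfrac12\leq |\cdot|\leq 2$, so one must first establish $\tfrac12\leq |\nu_h^*|\leq 2$ pointwise, uniformly in $h\leq h_0$. This follows from $|\nu|=1$ together with an $L^\infty$-bound on $\nu_h^*-\nu^{-\ell}$, which is available from \eqref{eq:ritzest} under the stipulated $W^{k+1,\infty}$-regularity. Once the range is secured, the Lipschitz estimates combined with \eqref{eq:ritzest} bring the nonlinear-coefficient errors down to $O(h^{k+1})$. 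The quadratic term $|A|_{\gamma''}^2$ additionally requires $W^{1,\infty}$-control of $\nabla_{\Gamma}(\nu-\nu_h^*)$, which follows from \eqref{eq:ritzest} combined with an inverse inequality on $S_h[\xs]$. An analogous argument controls the discrepancy $|A|_{\gamma''}^2(\nu) - |A_h|_{\gamma''_h}^2(\nu_h^*)$, using symmetry of $\nabla_{\Ga}\nu$ in the continuous case and of $A_h$ in the discrete case by construction.
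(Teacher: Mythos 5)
Your treatment of $d_v$ matches the paper's approach (splitting $v_h^*-\widetilde I_h^*(V_h^*\nu_h^*)$ via adding and subtracting interpolation error terms and invoking a discrete-product $H^1$ bound together with \eqref{eq:ritzest}), so that part is fine, modulo the technical detail that the paper inserts extra interpolation operators $\widetilde I_h^*$ around the factors so that the product lemma for finite element functions literally applies.

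For $d_u$ the overall duality strategy (test \eqref{eq:defect definition - modified aMCF-c} against $\varphi_h$, subtract the continuous weak form tested against $\varphi_h^\ell$, and decompose into geometric, time-derivative, Ritz-map, and coefficient-replacement contributions) is exactly what the paper does. However, there is a genuine error in your rate bookkeeping. You claim every piece is $\leq Ch^{k+1}\|\varphi_h\|_{L^2}$, and in particular that the Lipschitz estimates combined with \eqref{eq:ritzest} bring the nonlinear-coefficient errors ``down to $O(h^{k+1})$''. This fails for the stiffness-coefficient replacement term
\begin{equation*}
\int_{\Ga_h[\xs]} \bigl(\gamma''(\nu_h^*)-\gamma''(\nu^{-\ell})\bigr)\nabla_{\Ga_h[\xs]} u_h^* : \nabla_{\Ga_h[\xs]}\varphi_h,
\end{equation*}
because this residual is paired against $\nabla_{\Ga_h}\varphi_h$, not $\varphi_h$. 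One gets $\leq C\|\nu_h^*-\nu^{-\ell}\|_{L^2}\|\nabla_{\Ga_h}\varphi_h\|_{L^2}\leq Ch^{k+1}\|\nabla_{\Ga_h}\varphi_h\|_{L^2}$, and converting to $\|\varphi_h\|_{L^2}$ requires an inverse inequality, which costs one power of $h$ and yields only $Ch^{k}\|\varphi_h\|_{L^2}$. The same happens with the terms containing $|A|_{\gamma''}^2$ and $V\nabla u\cdot\beta'(\nu)$: their consistency errors involve $\|u_h^*-u^{-\ell}\|_{H^1}$, which \eqref{eq:ritzest} bounds by $Ch^k$, not $Ch^{k+1}$. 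So the sharp rate for $\|d_u\|_{L^2}$ in this argument is $Ch^k$, not $Ch^{k+1}$. Your final stated bound $Ch^k$ is still what the lemma asserts, but the claim that each piece is $O(h^{k+1})$ is incorrect, and the inverse-estimate step — which is the genuine subtlety here — is missing from your argument. Also note that the ``bilinear-form consistency'' in your item (iii) is not fully cancelled by the Ritz map: the Ritz map \eqref{eq:Ritzmap} is built with the coefficient $\gamma''(\nu^{-\ell})$, whereas the semidiscrete stiffness matrix uses $\gamma''(\nu_h^*)$, so precisely the residual above survives and must be estimated directly.
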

\begin{proof} Let us write $d_v = v^*_h - \tilde I_h^* (V_h^* \nu^*_h)$ with $v_h^* = \tilde I_h^* (v^{-\ell}) = \tilde I_h^* (V^{-\ell} \nu^{-\ell})$, where   $\tilde I_h^* f= \sum_{j=1}^N f(x^*_j(t)) \phi_j[\bfx^*(t)]$. It is shown in \cite[Lemma 5.3]{KLL_Willmore} that
\begin{displaymath}
\Vert \tilde I_h^* (a_h b_h ) \Vert_{H^1(\Ga_h[\xs(t)])}  \leq C \Vert a_h \Vert _{H^1(\Ga_h[\xs(t)])} \Vert b_h \Vert_{W^{1,\infty}(\Ga_h[\xs(t)])}, \; a_h, b_h \in S_h[\bfx^*(t)].
\end{displaymath}
Combining this bound with \eqref{eq:ritzest} and the interpolation estimates from  \cite[Proposition~2.7]{Demlow2009}, and recalling \eqref{eq:definition exacts velo interp}, we deduce that
\begin{eqnarray*}
	\Vert d_v \Vert_{H^1(\Ga_h[\xs(t)])} 
	& = & \Vert \tilde I_h^* (V^{-\ell} \nu^{-\ell}) - \tilde I_h^* (V_h^* \nu^*_h) \Vert _{H^1(\Ga_h[\xs(t)])}  \\
	& \leq & \Vert \tilde I_h^* \{ \tilde I_h^* ( V^{-\ell} - V^*_h) \tilde I_h^* \nu^{-\ell} \} \Vert _{H^1(\Ga_h[\xs(t)])} 
	\\&&\qquad \quad +\Vert \tilde I_h^* \{ V^*_h ( \tilde I_h^* \nu^{-\ell} - \nu^*_h) \} \Vert_{H^1(\Ga_h[\xs(t)])} \\
	& \leq & c \Vert \tilde I_h^* (V,\nu)^{-\ell} - (V^*_h,\nu^*_h) \Vert_{H^1(\Ga_h[\xs(t)])}  \\
	& \leq & c h^k,
\end{eqnarray*}
where we also used that $ \Vert \tilde I_h^* \nu^{-\ell} \Vert_{W^{1,\infty}(\Ga_h[\xs(t)])} + \Vert V^*_h \Vert_{W^{1,\infty}(\Ga_h[\xs(t)])}  \leq C$. 

Next, we obtain by writing \eqref{eq:defect definition - modified aMCF-c} in terms of $d_u$ 
and using \eqref{eq:coupled system - weak - modified aMCF - nu}, \eqref{eq:coupled system - weak - modified aMCF - V} as well as \eqref{eq:Ritzmap} for $\varphi_h \in S_h[\bfx^*]^{d+2}$
\begin{eqnarray*}
\lefteqn{ 
\int_{\Ga_h[\bfx^*]} d_u \cdot \varphi_h } \\
& = &  \int_{\Ga_h[\bfx^*]} \beta(\nu^*_h)\partial^\bullet_h u^*_h \cdot \varphi_h + \int_{\Ga_h[\bfx^*]} \gamma''(\nu^*_h) \nabla_{\Ga_h[\bfx^*]} u^*_h : \nabla_{\Ga_h[\bfx^*]} \varphi_h   \\
& & - \int_{\Ga_h[\bfx^*]} | A_h^* |^2_{\gamma_h''} u^*_h \cdot \varphi_h - \int_{\Ga_h[x^*]}  V_h^* (\nabla_{\Ga_h[x^*]} u^*_h)^T \,  \beta'(\nu_h^*) \cdot \varphi_h \\
& = & \Bigl(  \int_{\Ga_h[\bfx^*]} \beta(\nu^*_h)\partial^\bullet_h u^*_h \cdot \varphi_h - \int_{\Gamma[X]} \beta(\nu)\partial^\bullet u \cdot \varphi_h^{-\ell} \Bigr) \\
& & - \Bigl( \int_{\Ga_h[\bfx^*]}  u^*_h \cdot \varphi_h - \int_{\Gamma[X]}  u \cdot \varphi_h^{-\ell} \Bigr)   \\
& & + \int_{\Ga_h[\bfx^*]} \bigl( \gamma''(\nu_h^*) - \gamma''(\nu^{-\ell}) \bigr) \nabla_{\Ga_h[\bfx^*]} u^*_h : \nabla_{\Ga_h[\bfx^*]} \varphi_h \\
& & - \Bigl( \int_{\Ga_h[\bfx^*]} | A_h^* |^2_{\gamma_h''} u^*_h \cdot \varphi_h - \int_{\Ga[X]} | A|_{\gamma''}^2 u \cdot \varphi_h^{-\ell} \Bigr) \\
& & - \Bigl( \int_{\Ga_h[\bfx^*]} V_h^* (\nabla_{\Ga_h[x^*]} u^*_h)^T \,  \beta'(\nu_h^*) \cdot \varphi_h - \int_{\Ga[X]} V (\nabla_{\Gamma[X]} u)^T \beta'(\nu) \cdot \varphi_h^{-\ell} \Bigr) \\
& = &: J_1(\varphi_h) +\ldots +J_5(\varphi_h).
\end{eqnarray*}
By transforming the integrals over $\Gamma_h[\bfx^*]$ to $\Gamma[X]$ and using \eqref{eq:ritzest}, \eqref{eq:ritzesttime}  and \eqref{eq:betacond2} one shows that
\begin{displaymath}
| J_1(\varphi_h)|  + |J_2(\varphi_h)|  \leq C h^{k+1} \Vert \varphi_h \Vert_{L^2(\Ga_h[\bfx^*])}.
\end{displaymath}
Next, we infer from \eqref{eq:gamma2}, \eqref{eq:ritzest} and an inverse estimate that 
\begin{eqnarray*}
| J_3(\varphi_h) | & \leq &  C_1 \int_{\Ga_h[\bfx^*]} | \nu^*_h - \nu^{-\ell}| \,  | \nabla_{\Ga_h[\bfx^*]} u^*_h  | \, |  \nabla_{\Ga_h[\bfx^*]} \varphi_h | \\
& \leq & C \Vert u^*_h - u^{-\ell} \Vert_{L^2(\Ga_h[\bfx^*])} \Vert \nabla_{\Ga_h[\bfx^*]} \varphi_h \Vert_{L^2(\Ga_h[\bfx^*])} \\
& \leq & C h^{k+1} \Vert \nabla_{\Ga_h[\bfx^*]} \varphi_h \Vert_{L^2(\Ga_h[\bfx^*])}  \leq C h^k \Vert \varphi_h \Vert_{L^2(\Ga_h[\bfx^*])} .
\end{eqnarray*}
Transforming again to $\Ga[X]$ and using \eqref{eq:gamma2}, \eqref{eq:betacond3} as well as
\begin{displaymath}
	\big| \,  | A_h^*|^2_{\gamma''_h} - | A |^{2,-\ell}_{\gamma''} \big| \leq C \bigl( | u^*_h - u^{-\ell} | + | \nabla_{\Ga_h[\bfx^*]}( u^*_h - u^{-\ell}) | \bigr)
\end{displaymath}
we infer that
\begin{displaymath}
	| J_4(\varphi_h) | + | J_5(\varphi_h) | \leq C \Vert u^*_h - u^{-\ell} \Vert_{H^1(\Ga_h[\bfx^*])} \Vert \varphi_h \Vert_{L^2(\Ga_h[\bfx^*])}  \leq C h^k \Vert \varphi_h \Vert_{L^2(\Ga_h[\bfx^*])} .
\end{displaymath}
Combining the above bounds we deduce that $| \int_{\Gamma_h[\bfx^*]} d_u \cdot \varphi_h | \leq C h^k \Vert\varphi_h \Vert_{L^2(\Ga_h[\bfx^*])}$ for all $\varphi_h \in S_h[\bfx^*]^{d+2}$ which yields \eqref{eq:dvuest}. \qed
\end{proof}
\noindent
For later use we observe that \eqref{eq:ritzest}, \eqref{eq:nudif} along with an inverse estimate imply that
\begin{equation} \label{eq:normdif}
| \nu_h^* - \nu_{\Ga_h[\bfx^*]}  |  \leq   | \nu_h^* -  \nu^{-\ell} | + | \nu^{-\ell} -   \nu_{\Ga_h[\bfx^*]} | 
 \leq   c h^{k+1-\frac{d}{2}}+ c h^k \leq  \min\Big(\frac{\delta}{2},\frac{1}{4}\Big)
 \end{equation}
 if $0<h \leq h_0$, where $\delta$ is related to \eqref{eq:convex1}. 
 In particular, we have that  
 \begin{equation} \label{eq:nustar}
  \frac{3}{4} \leq | \nu_h^* | \leq \frac{3}{2} \quad \mbox{ a.e. on } \Gamma_h[\xs].
  \end{equation}

\subsection{\it Auxiliary results}
\label{section:auxiliary}

\noindent
Let us abbreviate
\begin{equation} \label{eq:defe}
\ex=\bfx - \bfx^*, \quad \eu=\bfu - \bfu^*, \quad \ev=\bfv - \bfv^*
\end{equation}
with corresponding finite element functions $e_x,e_u,e_v \in S_h[\bfx^*]$. Furthermore, we shall associate with a vector 
${\bf w} = (w_j)_{j=1}^N \in \mathbb R^{N}$ the finite element functions 
\begin{displaymath}
 w_h=\sum_{j=1}^N w_j \phi_j[\bfx], \quad w_h^*= \sum_{j=1}^N w_j \phi_j[\bfx^*].
\end{displaymath}

The following norm equivalence results were shown in \cite[Lemma~4.2, 4.3]{KLLP2017}.
\begin{lemma} \label{lem:normequiv}
Suppose that $	\| \nabla_{\Gamma_h[\bfx^*]} e_x \|_{L^\infty(\Gamma_h[\bfx^*])}\le \frac14$ and that $1 \leq p \leq \infty$. Then there exist $c_p, C_p>0$ such that
for all  $\bfw \in \mathbb R^{N}$
\begin{eqnarray}
c_p \, \Vert w^*_h \Vert_{L^p(\Gamma_h[\bfx^*])}
& \leq  & \Vert w_h \Vert_{L^p(\Gamma_h[\bfx])}   \leq   C_p \, \Vert w^*_h \Vert_{L^p(\Gamma_h[\bfx^*])} ; \label{eq:equivl2}  \\
 c_p \, \Vert  \nabla_{\Gamma_h} w^*_h  \Vert_{L^p(\Gamma_h[\bfx^*])} & \leq & 
 \Vert  \nabla_{\Gamma_h} w_h  \Vert_{L^p(\Gamma_h[\bfx])}   \leq   C_p\,  \Vert  \nabla_{\Gamma_h} w^*_h  \Vert_{L^p(\Gamma_h[\bfx^*])} . \label{eq:equivh1}
\end{eqnarray}
\end{lemma}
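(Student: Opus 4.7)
The plan is to construct a natural element-by-element homeomorphism $\Phi_t \colon \Gamma_h[\bfx^*] \to \Gamma_h[\bfx]$ and then reduce both bounds to a change of variables in which the Jacobian and tangent-space transformations are controlled by the smallness hypothesis on $\nabla_{\Gamma_h[\bfx^*]} e_x$. Concretely, for $q = X_h^*(p_h) \in \Gamma_h[\bfx^*]$, set $\Phi_t(q) := X_h(p_h)$. Since $X_h$ and $X_h^*$ share the same nodal basis on the reference triangle, one obtains the simple identity $\Phi_t(q) = q + e_x(q)$, where $e_x \in S_h[\bfx^*]^{d+1}$ is the finite element function with nodal vector $\bfx - \bfx^*$. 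A second elementary observation is that for any $\bfw\in\mathbb{R}^N$ the associated finite element functions $w_h \in S_h[\bfx]$ and $w_h^* \in S_h[\bfx^*]$ satisfy $w_h\circ \Phi_t = w_h^*$ pointwise on every element of $\Gamma_h[\bfx^*]$, which reduces the change of variables to a pointwise comparison of the two functions.

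First I would bound the surface Jacobian $J_{\Phi_t}$ of $\Phi_t$. The tangential differential $d\Phi_t$ equals $I + \nabla_{\Gamma_h[\bfx^*]} e_x$ on the tangent space, and a standard determinant expansion yields $|J_{\Phi_t} - 1| \le C\|\nabla_{\Gamma_h[\bfx^*]} e_x\|_{L^\infty(\Gamma_h[\bfx^*])}$. Under the hypothesis $\|\nabla_{\Gamma_h[\bfx^*]} e_x\|_{L^\infty} \leq \tfrac14$ one therefore has $\tfrac12 \leq J_{\Phi_t} \leq \tfrac32$ almost everywhere, from which \eqref{eq:equivl2} for $1 \leq p < \infty$ follows via
\[
\|w_h\|_{L^p(\Gamma_h[\bfx])}^p = \int_{\Gamma_h[\bfx^*]} |w_h^*|^p \, J_{\Phi_t} ,
\]
while the case $p=\infty$ is immediate from the pointwise identity $w_h\circ\Phi_t = w_h^*$.

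For \eqref{eq:equivh1} the chain rule yields, on every element and for each tangent vector $\tau$ of $\Gamma_h[\bfx^*]$ at $q$,
\[
\nabla_{\Gamma_h[\bfx]} w_h(\Phi_t(q)) \cdot d\Phi_t(q)\tau = \nabla_{\Gamma_h[\bfx^*]} w_h^*(q) \cdot \tau ,
\]
so that $(\nabla_{\Gamma_h[\bfx]} w_h) \circ \Phi_t = (d\Phi_t)^{-T} \nabla_{\Gamma_h[\bfx^*]} w_h^*$ as tangent-space-valued maps. The assumption $\|\nabla_{\Gamma_h[\bfx^*]} e_x\|_{L^\infty} \leq \tfrac14$ ensures, via a Neumann series argument, that the restrictions of $d\Phi_t$ and its inverse to the tangent space have operator norms in $[\tfrac34,\tfrac54]$ uniformly on every element. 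Combined with the two-sided Jacobian bound and once again using $w_h\circ\Phi_t = w_h^*$, the change of variables then delivers the two-sided $L^p$-estimate for the surface gradients.

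The main technical obstacle is that for $k\ge 2$ the surfaces $\Gamma_h[\bfx]$ and $\Gamma_h[\bfx^*]$ are only piecewise polynomial, so that both the Gram matrix entering the definition of the tangential gradient and the determinant entering $J_{\Phi_t}$ are nontrivial polynomials on each curved element; one must in particular compare tangent spaces and first fundamental forms of $X_h$ and $X_h^*$ rather than of affine triangles. The key point is that the shape regularity and quasi-uniformity of the initial triangulation, together with the $L^\infty$-smallness of $\nabla_{\Gamma_h[\bfx^*]} e_x$, allow the ratios of the two fundamental forms to be bounded above and below uniformly in the element, so that all constants in the above estimates can be chosen independently of $h$ and of the element under consideration.
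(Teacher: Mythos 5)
Your proof is correct and follows essentially the same route as the cited reference \cite[Lemma~4.2, 4.3]{KLLP2017} (the paper itself gives no proof, only the citation): construct the element-wise homeomorphism $\Phi_t$ via the shared Lagrange basis, use the pointwise identities $\Phi_t = \mathrm{id} + e_x$ and $w_h\circ\Phi_t = w_h^*$, and then reduce both estimates to a change of variables whose surface Jacobian and tangential differential are controlled by $\|\nabla_{\Gamma_h[\bfx^*]} e_x\|_{L^\infty}\le\frac14$. Two cosmetic points: with the paper's column-wise convention $\nabla_\Gamma w = (\nabla_\Gamma w_1,\dots,\nabla_\Gamma w_{d+1})$, the tangential differential is $I + (\nabla_{\Gamma_h[\bfx^*]} e_x)^T$ rather than $I + \nabla_{\Gamma_h[\bfx^*]} e_x$ (irrelevant for norms); and the claimed numerical bound $\frac12\le J_{\Phi_t}\le\frac32$ is not sharp for $d\ge 2$ — what one actually gets from singular values in $[\frac34,\frac54]$ is $J_{\Phi_t}\in[(\frac34)^d,(\frac54)^d]$ — but since the lemma only asserts existence of some $c_p,C_p>0$, this does not affect the conclusion.
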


\begin{lemma}
	\label{lemma:solution-dependent mass matrix diff}
	There exists $\mu>0$ such that $\frac{1}{2} \leq | \nu_h | \leq 2$ and
	\begin{align}
		c  \Vert w^*_h \Vert_{L^2(\Gamma_h[\bfx^*])}^2  & \leq  \Vert \bfw \Vert^2_{\bfM(\bfx,\bfu)} \leq  C \Vert w^*_h \Vert_{L^2(\Gamma_h[\bfx^*])}^2;  \label{eq:Mxunormequiv} \\
		c  \Vert \nabla_{\Gamma_h} w^*_h \Vert_{L^2(\Gamma_h[\bfx^*])}^2  &  \leq \|\bfw\|_{\bfA(\bfx,\bfu)}^2 \leq  C \Vert \nabla_{\Gamma_h} w^*_h \Vert_{L^2(\Gamma_h[\bfx^*])}^2    \label{eq:Axunormequiv}
	\end{align}
	for all $\bfw \in \mathbb R^{N}$, provided that  $\Vert e_u \Vert_{L^\infty(\Gamma_h[\bfx^*])} \leq \mu, \Vert e_x \Vert_{W^{1,\infty}(\Gamma_h[\bfx^*])} \leq \mu$. Here,  we have abbreviated  $\Vert \bfw \Vert_{\bf S}^2= \bfw^T {\bf S} \bfw$ for a matrix ${\bf S} \in \mathbb R^{N \times N}$.
\end{lemma}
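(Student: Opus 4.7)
The plan is to handle the pointwise bound $\tfrac12\le|\nu_h|\le 2$ first, and then address the mass and stiffness norm equivalences separately, since the stiffness case is the more delicate one.

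First, I would use the triangle inequality $\big||\nu_h|-|\nu_h^*|\big|\le|\nu_h-\nu_h^*|\le\|e_u\|_{L^\infty(\Gamma_h[\bfx^*])}\le\mu$ together with the bound $\tfrac34\le|\nu_h^*|\le\tfrac32$ from \eqref{eq:nustar}, so choosing $\mu\le\tfrac14$ gives $\tfrac12\le|\nu_h|\le 2$. This places $\nu_h$ inside the annulus on which \eqref{eq:betacond1}, \eqref{eq:gamma2} and continuity of $\gamma''$ provide uniform bounds. With $\mu\le\tfrac14$, Lemma~\ref{lem:normequiv} is also applicable since $\|\nabla_{\Gamma_h[\bfx^*]}e_x\|_{L^\infty}\le\mu\le\tfrac14$.

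For the mass matrix bound \eqref{eq:Mxunormequiv}, the argument is immediate: \eqref{eq:betacond1} yields $c_2\int_{\Gamma_h[\bfx]}w_h^2\le\|\bfw\|_{\bfM(\bfx,\bfu)}^2\le c_3\int_{\Gamma_h[\bfx]}w_h^2$, and then Lemma~\ref{lem:normequiv} with $p=2$ transfers the $L^2$ norm from $\Gamma_h[\bfx]$ to $\Gamma_h[\bfx^*]$. For the stiffness upper bound in \eqref{eq:Axunormequiv}, uniform boundedness of $\gamma''$ on $\{\tfrac12\le|v|\le 2\}$ (a consequence of continuity, or of \eqref{eq:gamma2} applied with a fixed reference point) gives $\|\bfw\|_{\bfA(\bfx,\bfu)}^2\le C\int_{\Gamma_h[\bfx]}|\nabla_{\Gamma_h[\bfx]}w_h|^2$, after which Lemma~\ref{lem:normequiv} finishes the job.

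The heart of the argument is the stiffness \emph{lower} bound, where I must exploit that $\nabla_{\Gamma_h[\bfx]}w_h$ is tangential to $\Gamma_h[\bfx]$, i.e.~orthogonal to the discrete unit normal $\nu_{\Gamma_h[\bfx]}$. The plan is to apply \eqref{eq:convex1} pointwise with $\tilde z=\nu_h$, $z=\nu_{\Gamma_h[\bfx]}$, and $w=\nabla_{\Gamma_h[\bfx]}w_h$, which requires $|\nu_h-\nu_{\Gamma_h[\bfx]}|<\delta$. I would split
\[
|\nu_h-\nu_{\Gamma_h[\bfx]}|\le|\nu_h-\nu_h^*|+|\nu_h^*-\nu_{\Gamma_h[\bfx^*]}|+|\nu_{\Gamma_h[\bfx^*]}-\nu_{\Gamma_h[\bfx]}|,
\]
and control the three summands respectively by $\mu$ (from $\|e_u\|_{L^\infty}\le\mu$), by $\delta/2$ (from \eqref{eq:normdif}, valid for $h\le h_0$), and by $C\|e_x\|_{W^{1,\infty}}\le C\mu$ (the standard Lipschitz dependence of the discrete normal on the parametrization). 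Choosing $\mu$ small enough makes the sum strictly less than $\delta$. Then \eqref{eq:convex1} yields
\[
\nabla_{\Gamma_h[\bfx]}w_h\cdot\gamma''(\nu_h)\nabla_{\Gamma_h[\bfx]}w_h\ge\tfrac{c_0}{4}\,|\nabla_{\Gamma_h[\bfx]}w_h|^2
\]
pointwise, so integrating and applying Lemma~\ref{lem:normequiv} to $\|\nabla_{\Gamma_h}\cdot\|_{L^2}$ gives the lower bound in \eqref{eq:Axunormequiv}. The only subtle point is the simultaneous calibration of $\mu$ (and of $h_0$ if needed) so that all three triangle terms, plus the condition $\mu\le\tfrac14$ used earlier, are compatible; no single ingredient is hard, but the bookkeeping is what makes the statement hold.
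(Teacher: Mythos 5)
Your proposal is correct and follows essentially the same route as the paper's proof: the same use of \eqref{eq:nustar} and $\|e_u\|_{L^\infty}\le\mu$ for the pointwise bound on $|\nu_h|$, the same appeal to \eqref{eq:betacond1} and Lemma~\ref{lem:normequiv} for \eqref{eq:Mxunormequiv} and the stiffness upper bound, and the identical three-term triangle-inequality splitting of $|\nu_h-\nu_{\Gamma_h[\bfx]}|$ (controlled via \eqref{eq:normdif} and $\|e_x\|_{W^{1,\infty}}\le\mu$) to invoke \eqref{eq:convex1} with $w=\nabla_{\Gamma_h[\bfx]}w_h$ for the coercivity bound. The only difference is presentational, namely your explicit choice $\mu\le\tfrac14$ and the slightly more detailed bookkeeping of the constants.
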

\begin{proof} Since $\frac{3}{4} \leq | \nu_h^* | \leq \frac{3}{2}$ there exists $\mu>0$ such that $\frac{1}{2} \leq | \nu_h | \leq 2$ provided that $\Vert e_u \Vert_{L^\infty(\Gamma_h[\bfx^*])} \leq \mu$. 
Observing that $\Vert \bfw \Vert^2_{\bfM(\bfx,\bfu)} = \int_{\Gamma_h[\bfx]} \beta(\nu_h) w_h^2$, \eqref{eq:Mxunormequiv} immediately follows  from \eqref{eq:betacond1} and \eqref{eq:equivl2}. 
Next, we have in view of \eqref{eq:normdif}
\begin{align*}
| \nu_h - \nu_{\Ga_h[\bfx]}  |  \leq &\ | \nu_h - \nu_h^*| +   | \nu_h^* -     \nu_{\Ga_h[\bfx^*]} |  + |  \nu_{\Ga_h[\bfx^*]}  -  \nu_{\Ga_h[\bfx]}|  \\
\leq  &\ \Vert e_u \Vert_{L^\infty(\Gamma_h[\bfx^*])}+  \frac{\delta}{2}  + c \Vert e_x \Vert_{W^{1,\infty}(\Gamma_h[\bfx^*])} < \delta ,
\end{align*}
if $\mu>0$ is small enough. Thus, \eqref{eq:convex1}  implies for all $v \in \mathbb R^{d+1}$ with $ v \cdot \nu_{\Ga_h[\bfx]}=0$
\begin{displaymath}
v \cdot \gamma''(\nu_h) v \geq \frac{c_0}{4} | v|^2 .
\end{displaymath}
Applying this estimate with $v=\nabla_{\Gamma_h} w_h$ and observing that
\begin{displaymath}
\bfw^T \bfA(\bfx,\bfu) \bfw =  \int_{\Ga_h[\bfx]} \!\!\! \gamma''(\nu_h) \, \nabla_{\Ga_h[\bfx]} w_h  \cdot \nabla_{\Ga_h[\bfx]} w_h ,
\end{displaymath}
we deduce the first bound in \eqref{eq:Axunormequiv} with the help of \eqref{eq:equivh1}.  The second bound follows from the  fact that  $| \gamma''(\nu_h)| \leq \max_{\frac{1}{2} \leq |v| \leq 2} | \gamma''(v)|$. 
 \qed
\end{proof}

\begin{lemma} Let  $\bfw,  \bfz \in \mathbb R^N$ and suppose that $\Vert z_h^* \Vert_{W^{1,\infty}(\Gamma_h[\xs])} \leq C$. Then
\begin{align}
\bfw^T \big( \bfM(\bfx,\bfu)-\bfM(\bfx,\us) \big) \bfz 
		&\leq C \, \| e_u \|_{L^2(\Gamma_h[\bfx^*])}  \, \| w_h^* \|_{L^2(\Gamma_h[\bfx^*])} ;  \label{eq:mdif1}\\
		\bfw^T \big( \bfM(\bfx,\bfu^*)-\bfM(\bfx^*,\bfu^*) \big) \bfz 
		&\leq C \, \| e_x \|_{H^1(\Gamma_h[\bfx^*])}  \| w_h^* \|_{L^2(\Gamma_h[\bfx^*])} ; \label{eq:mdif2} \\
		\bfw^T \big( \bfA(\bfx,\bfu) - \bfA(\bfx,\us) \big) \bfz 
		&\leq C \, \| e_u \|_{L^2(\Gamma_h[\bfx^*])}  \, \| w_h^* \|_{H^1(\Gamma_h[\bfx^*])}  ; \label{eq:adif1}\\
		\bfw^T \big( \bfA(\bfx,\bfu^*) - \bfA(\bfx^*,\us) \big) \bfz 
		&\leq C \, \| e_x \|_{H^1(\Gamma_h[\bfx^*])}    \| w_h^* \|_{H^1(\Gamma_h[\bfx^*])}  . \label{eq:adif2}
	\end{align}
\end{lemma}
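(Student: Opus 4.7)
The plan is to split the four bounds into two families according to their structure. Bounds \eqref{eq:mdif1} and \eqref{eq:adif1} keep the integration surface fixed at $\Gamma_h[\bfx]$ and vary only the normal appearing in the coefficient, while bounds \eqref{eq:mdif2} and \eqref{eq:adif2} freeze the nodal vector $\bfn^*$ of the coefficient and vary the integration surface from $\Gamma_h[\bfx^*]$ to $\Gamma_h[\bfx]$. Throughout, let $\widetilde\nu_h\in S_h[\bfx]^{d+1}$ denote the finite element function on $\Gamma_h[\bfx]$ with nodal vector $\bfn^*$. By Lemma~\ref{lemma:solution-dependent mass matrix diff} we have $\tfrac12\le |\nu_h|\le 2$, and because $\widetilde\nu_h$ shares its nodal values with $\nu_h^*$ the bound \eqref{eq:nustar} combined with the smallness of $\|e_x\|_{W^{1,\infty}}$ also gives $\tfrac12\le |\widetilde\nu_h|\le 2$; this ensures applicability of the Lipschitz properties \eqref{eq:betacond2} and \eqref{eq:gamma2}.

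For \eqref{eq:mdif1} and \eqref{eq:adif1} I would rewrite the difference as a single integral over $\Gamma_h[\bfx]$,
$$
\bfw^T\!\big(\bfM(\bfx,\bfu)-\bfM(\bfx,\us)\big)\bfz=\int_{\Gamma_h[\bfx]}\!\!\big(\beta(\nu_h)-\beta(\widetilde\nu_h)\big)\,w_h z_h,
$$
and similarly for the stiffness contribution with $\gamma''(\nu_h)-\gamma''(\widetilde\nu_h)$ contracted with the two tangential gradients. The Lipschitz bound yields pointwise $|\beta(\nu_h)-\beta(\widetilde\nu_h)|\le c_3 |\nu_h-\widetilde\nu_h|$, and the FE function $\nu_h-\widetilde\nu_h$ has nodal vector equal to the normal block of $e_u$. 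A Hölder estimate with $L^2\cdot L^2\cdot L^\infty$ splitting, followed by Lemma~\ref{lem:normequiv} to transfer norms to $\Gamma_h[\bfx^*]$, and the hypothesis $\|z_h^*\|_{W^{1,\infty}}\le C$ together close \eqref{eq:mdif1}; the argument for \eqref{eq:adif1} is identical with $\nabla_{\Gamma_h[\bfx]}w_h,\nabla_{\Gamma_h[\bfx]}z_h$ in place of $w_h,z_h$, invoking \eqref{eq:gamma2} instead of \eqref{eq:betacond2}.

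For \eqref{eq:mdif2} and \eqref{eq:adif2} I would interpolate between $\bfx^*$ and $\bfx$ along the homotopy $\bfx^\theta:=\bfx^*+\theta e_x$, $\theta\in[0,1]$, which gives admissible surfaces $\Gamma_h[\bfx^\theta]$ under the $W^{1,\infty}$-smallness of $e_x$. Write
$$
\bfM(\bfx,\us)-\bfM(\bfx^*,\us)=\int_0^1\frac{d}{d\theta}\bfM(\bfx^\theta,\us)\,d\theta,
$$
and analogously for $\bfA$. The key observation is that the FE function $\widetilde\nu_h^\theta\in S_h[\bfx^\theta]^{d+1}$ with nodal vector $\bfn^*$ satisfies $\widetilde\nu_h^\theta(X_h^\theta(p_h))=\sum_j n_j^*\phi_j(p_h)$, which is $\theta$-independent; hence $\beta(\widetilde\nu_h^\theta)$ and $\gamma''(\widetilde\nu_h^\theta)$ are materially constant along the flow $X_h^\theta$. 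Consequently the $\theta$-derivative of $\bfM(\bfx^\theta,\us)$ picks up only the transport-theorem contribution from the moving measure of $\Gamma_h[\bfx^\theta]$, weighted by the uniformly bounded factor $\beta(\widetilde\nu_h^\theta)\in[c_2,c_3]$. I would then invoke the standard matrix-perturbation estimate from \cite[Lemma~4.1]{KLLP2017} (see also \cite[Lemma~7.2]{MCF}) to obtain
$$
\bigl|\bfw^T\partial_\theta\bfM(\bfx^\theta,\us)\bfz\bigr|\le C\,\|e_x\|_{H^1(\Gamma_h[\bfx^\theta])}\,\|w_h^\theta\|_{L^2(\Gamma_h[\bfx^\theta])}\,\|z_h^\theta\|_{W^{1,\infty}(\Gamma_h[\bfx^\theta])},
$$
and after integration over $\theta$ and an application of Lemma~\ref{lem:normequiv} this yields \eqref{eq:mdif2}. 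The same recipe with $\gamma''(\widetilde\nu_h^\theta)$ in the coefficient, and tangential gradients replacing the traces, gives \eqref{eq:adif2}.

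The main obstacle I anticipate is disentangling the two ways in which $\bfM(\bfx,\bfu)$ and $\bfA(\bfx,\bfu)$ depend on $\bfx$: through the integration surface and through the normal $\nu_h$, which itself is a FE function on $\Gamma_h[\bfx]$. The material invariance of $\beta(\widetilde\nu_h^\theta), \gamma''(\widetilde\nu_h^\theta)$ along the homotopy—valid precisely because the nodal values of the coefficient have been frozen to $\bfn^*$—is the structural point that allows one to reduce the estimate to the purely geometric perturbation lemmas already proved in the literature, rather than having to redo the analysis with a coefficient that moves together with the surface.
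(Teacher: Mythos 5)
Your proposal is correct and follows essentially the same route as the paper: the bounds \eqref{eq:mdif1} and \eqref{eq:adif1} via the Lipschitz estimates \eqref{eq:betacond2}, \eqref{eq:gamma2}, the pointwise control of $\nu_h-\widetilde\nu_h$ by the normal block of $e_u$, and the norm equivalences of Lemma~\ref{lem:normequiv}; and the bounds \eqref{eq:mdif2} and \eqref{eq:adif2} by adapting the intermediate-surface (homotopy) argument of Lemma~4.1 in \cite{KLLP2017}, exploiting that the coefficient with frozen nodal vector $\bfn^*$ is materially constant along $\bfx^\theta$ and uniformly bounded. The paper's proof is just a terse pointer to these ingredients, and your write-up fills in the same details.
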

\begin{proof} The estimates \eqref{eq:mdif1}, \eqref{eq:adif1} follow from \eqref{eq:betacond2}, \eqref{eq:gamma2} and Lemma \ref{lem:normequiv} together with the fact that $| \nu_h -\nu_h^* | \leq | u_h - u_h^* |$. In order to prove the remaining
bounds one can adapt Lemma 4.1 in \cite{KLLP2017} in a straightforward way and use the uniform boundedness of $\beta(\nu_h^*)$ and $\gamma''(\nu_h^*)$  respectively. \qed
\end{proof}
\noindent
In order to formulate our next result we recall that the discrete velocity is given by $v_h(\cdot,t)=\sum_{j=1}^N \dot{x}_j(t) \phi_j[\bfx(t)] \in S_h[\bfx(t)]^{d+1}$. 
\begin{lemma}
\label{lemma:quasi-linear stiffness matrix diff} 
	Let  $\bfw, \bfz \in \mathbb R^N$ and suppose that $\Vert z_h^* \Vert_{W^{1,\infty}(\Gamma_h[\xs])} \leq C$ as well as
	$\Vert  v_h \Vert_{W^{1,\infty}(\Gamma_h[\bfx])} \leq C$. Then
		\begin{align}
	 & 	\bfw^T \diff \bfA(\bfx,\bfu)   \bfz 
			\leq C \, \| w_h^* \|_{H^1(\Gamma_h[\bfx^*])}  \bigl( \| z_h^*  \|_{H^1(\Gamma_h[\bfx^*])} + \Vert \mat_h e_u  \Vert_{L^2(\Gamma_h[\bfx^*])}  \bigr)  ; \label{eq:atime1}\\
		&   \bfw^T \diff \big( \bfA(\bfx,\bfu) - \bfA(\bfx,\us) \big) \bfz   \leq C \, \bigl(  \| e_u \|_{L^2(\Gamma_h[\bfx^*])} + \Vert \mat_h e_u  \Vert_{L^2(\Gamma_h[\bfx^*])}  \bigr) \| w_h^* \|_{H^1(\Gamma_h[\bfx^*])}  ;  \label{eq:atime2}  \\
	&	\bfw^T \diff \big( \bfA(\bfx,\bfu^*) - \bfA(\bfx^*,\us) \big) \bfz 
			  \leq C \, \bigl(   \| e_x \|_{H^1(\Gamma_h[\bfx^*])}  + \Vert e_v \Vert_{H^1(\Gamma_h[\bfx^*])}  \bigr)  \| w_h^* \|_{H^1(\Gamma_h[\bfx^*])}  .  \label{eq:atime3}
		\end{align}
\end{lemma}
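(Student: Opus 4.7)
The plan is to differentiate the bilinear form
\[
\bfw^T \bfA(\bfx,\bfu) \bfz = \int_{\Ga_h[\bfx]} \gamma''(\nu_h)\,\nbgh w_h \cdot \nbgh z_h
\]
in time via the surface Reynolds transport formula, exploiting the transport property \eqref{eq:transport} which implies $\mat_h w_h = \mat_h z_h = 0$ for finite element functions with fixed nodal vectors $\bfw, \bfz$. This produces four contributions: a chain-rule term $\int \gamma'''(\nu_h)[\mat_h \nu_h]\,\nbgh w_h \cdot \nbgh z_h$; two commutator terms of the form $\int \gamma''(\nu_h)\,(\mat_h \nbgh w_h)\cdot \nbgh z_h$, which by \eqref{eq:interchange formula - mat-grad} reduce to $-\int \gamma''(\nu_h)\,(\nbgh v_h - \nu\nu^T(\nbgh v_h)^T)\nbgh w_h \cdot \nbgh z_h$ (and analogously for $\nbgh z_h$); and the transport term carrying the factor $\nbgh \cdot v_h$.

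For estimate (i), I would bound each of these four contributions pointwise using uniform bounds on $\gamma''$ and $\gamma'''$ provided by \eqref{eq:nustar} together with the standing smallness of $e_u$ (so $\tfrac12 \le |\nu_h| \le 2$, as in Lemma~\ref{lemma:solution-dependent mass matrix diff}); the hypothesis $\|v_h\|_{W^{1,\infty}(\Ga_h[\bfx])} \le C$; and $\|z_h\|_{W^{1,\infty}(\Ga_h[\bfx])} \le C$ transferred from $\|z_h^*\|_{W^{1,\infty}(\Ga_h[\xs])} \le C$ via a $W^{1,\infty}$-variant of Lemma~\ref{lem:normequiv}. The only term producing a $\|\mat_h e_u\|_{L^2}$ factor is the chain-rule term, after splitting $\mat_h \nu_h$ into the $\nu$-component of $\mat_h \us$ (bounded in $L^\infty$ using \eqref{eq:ritzesttime} and the regularity of $\nu$) plus the $\nu$-component of $\mat_h e_u$.

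For (ii), I would apply the same calculation to $\bfA(\bfx,\bfu) - \bfA(\bfx,\us)$, whose integrand now contains the factors $\gamma''(\nu_h) - \gamma''(\nu_h^*)$ and $\gamma'''(\nu_h) - \gamma'''(\nu_h^*)$; the Lipschitz bounds \eqref{eq:gamma2}, \eqref{eq:gamma3} convert these into $\|e_u\|_{L^2}$ factors, and the chain-rule piece additionally contributes $\|\mat_h e_u\|_{L^2}$ via $\mat_h(\nu_h - \nu_h^*)$. For (iii), the quantity is a difference across two different moving surfaces $\Ga_h[\bfx]$ and $\Ga_h[\xs]$, transported by different velocities $v_h$ and $v_h^*$. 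Since $\gamma''(\nu_h^*)$ and $\beta(\nu_h^*)$ are uniformly $W^{1,\infty}$-bounded on $\Ga_h[\xs]$, the anisotropic coefficient acts as a frozen weight and the task reduces to the matrix-difference machinery of Lemma~4.1 of \cite{KLLP2017}, adapted in the same spirit as \eqref{eq:adif2}. Differencing the two Reynolds formulas produces factors $\bfx - \xs = \ex$ from the static surface difference and $\bfv - \vs = \ev$ from the velocity difference, which combine to yield the claimed bound.

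The main technical obstacle is keeping the two surfaces and their respective velocities correctly separated in (iii): the difference of Reynolds formulas on $\Ga_h[\bfx]$ versus $\Ga_h[\xs]$ must be telescoped so that each resulting term exposes exactly one factor of $\ex$ or $\ev$, while the residual factors are controlled by the uniform constants afforded by the smoothness of $\xs$. A secondary subtlety is that the pointwise uses of \eqref{eq:gamma2}--\eqref{eq:gamma3} and of the convexity \eqref{eq:convex1} must remain valid throughout; this relies on the a priori smallness assumptions $\|e_u\|_{L^\infty(\Ga_h[\xs])}, \|e_x\|_{W^{1,\infty}(\Ga_h[\xs])} \le \mu$ that are implicit in the statement and will be secured inductively in the energy argument of Section~\ref{sec:erroranalysis}.
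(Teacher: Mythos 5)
Your proposal takes essentially the same route as the paper. The paper invokes Lemma~5.2 of \cite{DziukElliott_acta} to write $\bfw^T \tfrac{\d}{\d t}\bfA(\bfx,\bfu)\bfz = \int_{\Gamma_h[\bfx]} B(\nu_h,v_h)\nabla_{\Gamma_h}w_h\cdot\nabla_{\Gamma_h}z_h$ with
\[
B(\nu_h,v_h)=\mat_h \gamma''(\nu_h)+\tr(\nabla_{\Gamma_h}v_h)\gamma''(\nu_h)-\bigl(\gamma''(\nu_h)\nabla_{\Gamma_h}v_h+(\gamma''(\nu_h)\nabla_{\Gamma_h}v_h)^T\bigr),
\]
which is precisely your chain-rule term ($\mat_h\gamma''(\nu_h)=\gamma'''(\nu_h)\mat_h\nu_h$), divergence-transport term, and the two commutator pieces; \eqref{eq:atime1} then follows from $|\mat_h\nu_h|\le C(1+|\mat_h e_u|)$ and the $W^{1,\infty}$-bounds on $v_h$ and $z_h$, as you say. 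For \eqref{eq:atime2} the paper likewise estimates $B(\nu_h,v_h)-B(\nu_h^*,v_h)$ via \eqref{eq:gamma2}, \eqref{eq:gamma3}, matching your split into $\gamma''$-difference and $\gamma'''\,\mat_h\nu$-difference terms; and \eqref{eq:atime3} is (as in your sketch) deferred to the argument of \cite{MCFdiff}, bound~(8.8). One small imprecision worth noting: in your commutator step the normal appearing in \eqref{eq:interchange formula - mat-grad} is the normal of the \emph{discrete} surface $\nu_{\Gamma_h[\bfx]}$, not $\nu$ or $\nu_h$, and since $\gamma''(\nu_h)\nu_{\Gamma_h[\bfx]}\neq 0$ in general the $\nu\nu^T$-projection pieces do not drop out automatically as they would in the isotropic case — they are, however, still bounded by the same constants via \eqref{eq:normdif} and \eqref{eq:nustar}, so this does not affect the claimed estimates.
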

\begin{proof} In order to prove the first bound we use Lemma 5.2 in \cite{DziukElliott_acta} with $\mathcal A=\gamma''(\nu_h)$ and write
\begin{displaymath}
\bfw^T \diff \bfA(\bfx,\bfu)  \bfz  =  \diff \int_{\Gamma_h[\bfx]} \gamma''(\nu_h) \nabla_{\Gamma_h} w_h \cdot \nabla_{\Gamma_h} z_h = \int_{\Gamma_h[\bfx]} B(\nu_h,v_h) \nabla_{\Gamma_h} w_h \cdot \nabla_{\Gamma_h} z_h,
\end{displaymath}
where
\begin{equation} \label{eq:defB}
B(\nu_h,v_h):= \mat_h \gamma''(\nu_h)  + \tr(\nabla_{\Gamma_h} v_h) \gamma''(\nu_h) - (\gamma''(\nu_h) \nabla_{\Gamma_h} v_h + (\gamma''(\nu_h) \nabla_{\Gamma_h}v_h )^T) .
\end{equation}
Since $\mat_h \gamma''(\nu_h) = \gamma'''(\nu_h)\mat_h \nu_h$ and $| \mat_h \nu_h | \leq C \, (1+ | \mat_h e_u|)$ we deduce  
 \eqref{eq:atime1} using our assumptions on $\nabla_{\Gamma_h} v_h$  and $z_h^*$   as well as Lemma \ref{lem:normequiv}. 
Next, with the above
relation we have
\begin{displaymath}
\bfw^T \diff \big( \bfA(\bfx,\bfu) - \bfA(\bfx,\us) \big) \bfz = \int_{\Gamma_h[\bfx]} \bigl( B(\nu_h,v_h) - B(\nu_h^*,v_h) \bigr)  \nabla_{\Gamma_h} w_h \cdot \nabla_{\Gamma_h} z_h.
\end{displaymath}
Recalling \eqref{eq:defB} and using \eqref{eq:gamma2}, \eqref{eq:gamma3} we deduce that
\begin{eqnarray*}
| B(\nu_h,v_h) - B(\nu_h^*,v_h) | &  \leq &  | \gamma'''(\nu_h) \mat_h \nu_h - \gamma'''(\nu_h^*) \mat_h \nu_h^*| + c | \gamma''(\nu_h) - \gamma''(\nu_h^*) |  \\
& \leq & C \, \bigl( | e_u | + | \mat_h e_u | \bigr),
\end{eqnarray*}
from which we infer \eqref{eq:atime2}. 
The remaining estimate \eqref{eq:atime3} is obtained in a similar way as the bound (8.8) in \cite {MCFdiff}. \qed
\end{proof}

\subsection{\it Proof of the error bounds}
\label{section:proof of main theorems}

Let us define
\begin{align} 
\widehat{T}_h = & \sup \Big\{  t \in [0,T] :  \eqref{eq:semidiscretization - modified aMCF} \mbox{ has a solution on }
 [0,t],
\text{ and } 
\label{eq:hatTh} \\ 
& \hphantom{
	\sup \Big\{ 
	\, } \Vert (e_x(\cdot,s),e_v(\cdot,s), e_u(\cdot,s)) \Vert_{W^{1,\infty}([\Ga_h[\bfx^*(s)])} \leq h^{\frac{1}{3}}, \ \mbox{for } 0 \leq s \leq  t \Big\}.\nonumber
\end{align}
In order to be able to apply the results of the previous section we observe that we can assume on $[0,\widehat{T}_h)$ 
\begin{displaymath}
\Vert e_x \Vert_{W^{1,\infty}(\Gamma_h[\bfx^*])} \leq \min\Big(\frac{1}{4}, \mu\Big), \; \Vert e_u \Vert_{L^\infty(\Gamma_h[\bfx^*])} \leq \mu, \; \Vert v_h \Vert_{W^{1,\infty}(\Gamma_h[\bfx])} \leq C,
\end{displaymath}
provided that $0 < h \leq h_1$ and $h_1 \leq h_0$ is chosen sufficiently small. Here, the constant $\mu$ appears in Lemma \ref{lemma:solution-dependent mass matrix diff}. Our aim is 
to derive an energy type estimate for $(e_x,e_v,e_u)$ which simultaneously proves that $\widehat{T}_h=T$ as well as the error bounds claimed in Theorem \ref{theorem: error estimates - modified aMCF}. To
do so we will work with the matrix-vector form for the components of the error which we obtain from  \eqref{eq:matrix-vector form - modified aMCF} and \eqref{eq:defect definition - modified aMCF}: 
\begin{subequations}
	\label{eq:error equations - modified aMCF}
	\begin{align}
	         \label{eq:error equations - modified aMCF - x}
		\dotex = &\ \ev , \\
		\label{eq:error equations - modified aMCF - v}
		\ev = &\ \big(\bfV \bullet \bfn - \bfV^* \bullet \bfn^* \big) - \dv , \\
		\label{eq:error equations - modified aMCF - u}
		\bfM(\bfx,\bfu) \doteu + \bfA(\bfx,\bfu) \eu & \\
		& \hspace{-3cm}  = 	   \big( \bfM(\xs,\us) -  \bfM(\bfx,\bfu) \big) \dotus 
		  + \big(    \bfA(\xs,\us) -  \bfA(\bfx,\bfu)  \big) \us \nonumber \\
	& \hspace{-2.6cm}	 + \big(\bffb(\bfx,\bfu) - \bffb(\xs,\us) \big) 
		 - \bfM(\xs) \du . \nonumber 
	\end{align}
\end{subequations}
Note that we have written $\bfM(\bfx,\bfu)$ for $\bfM^{[d+2]}(\bfx,\bfu)$, and analogously for all other matrices in order to simplify the notation. To begin,
let us use the argument in  \cite[(B), p. 628]{KLL_Willmore}  to show that
\begin{displaymath}
 \Vert e_v \Vert_{H^1(\Gamma_h[\bfx^*])} \leq C \Vert e_u \Vert_{H^1(\Gamma_h[\bfx^*])}  + C \Vert d_v \Vert_{H^1(\Gamma_h[\bfx^*])} 
\end{displaymath}
and hence, recalling \eqref{eq:dvuest}
\begin{equation} \label{eq:vstab}
 \Vert e_v(t) \Vert_{H^1(\Gamma_h[\bfx^*(t)])}^2 \leq C h^{2k} + C  \Vert e_u(t) \Vert_{H^1(\Gamma_h[\bfx^*(t)])}^2, \quad 0 \leq t < \widehat{T}_h.
 \end{equation}
 Next, observing that $e_x(0)=0$, $\mat_h e_x= e_v$ as well as $\Vert \nabla_{\Gamma_h} v_h^* \Vert_{L^\infty(\Gamma_h[\xs])} \leq C$ we may estimate with the help of \eqref{eq:vstab}
 \begin{eqnarray}
 \lefteqn{ 
 \Vert e_x(t) \Vert_{H^1(\Gamma_h[\xs(t)])}^2 } \label{eq:xstab} \\
 & \leq &  C \int_0^t \bigl(  \Vert e_x(s) \Vert_{H^1(\Gamma_h[\bfx^*(s)]} 
 \Vert \mat_h e_x(s) \Vert_{H^1(\Gamma_h[\bfx^*(s)]}+  \Vert e_x(s) \Vert_{H^1(\Gamma_h[\bfx^*(s)])}^2 \bigr) \d s  \nonumber \\
 & \leq & C \int_0^t \bigl(  \Vert e_x(s) \Vert_{H^1(\Gamma_h[\bfx^*(s)]}^2 + \Vert e_v(s) \Vert_{H^1(\Gamma_h[\bfx^*(s)])}^2 \bigr) \d s  \nonumber \\
 & \leq & C h^{2k} + C  \int_0^t \bigl(  \Vert e_x(s) \Vert_{H^1(\Gamma_h[\bfx^*(s)]}^2 +  \Vert e_u(s) \Vert_{H^1(\Gamma_h[\bfx^*(s)])}^2 \bigr) \d s . \nonumber
 \end{eqnarray}
Similarly, using that $\Vert e_u(0) \Vert_{L^2(\Gamma_h[\xs(0)])} \leq C h^k$ we derive
\begin{eqnarray}
 \lefteqn{ 
 \Vert e_u(t) \Vert_{L^2(\Gamma_h[\xs(t)])}^2 } \label{eq:ustab} \\
 & \leq & C h^{2k} +   C \int_0^t \bigl(  \Vert e_u(s) \Vert_{L^2(\Gamma_h[\bfx^*(s)]} 
 \Vert \mat_h e_u(s) \Vert_{L^2(\Gamma_h[\bfx^*(s)]}+  \Vert e_u(s) \Vert_{L^2(\Gamma_h[\bfx^*(s)])}^2 \bigr) \d s  \nonumber \\
 & \leq & \eps \int_0^t  \Vert \mat_h e_u(s) \Vert_{L^2(\Gamma_h[\bfx^*(s)]} ^2\d s + C h^{2k} + C_\eps \int_0^t  \Vert  e_u(s) \Vert_{L^2(\Gamma_h[\bfx^*(s)]}^2 \d s. \nonumber
 \end{eqnarray}

\noindent
In order to control $\nabla_{\Gamma_h} e_u$   we multiply  \eqref{eq:error equations - modified aMCF - u} by $\doteu$ and deduce that
\begin{eqnarray}
\lefteqn{ \doteu^T \bfM(\bfx,\bfu) \doteu + \doteu^T \bfA(\bfx,\bfu) \eu } \nonumber \\
& = 	&     \doteu^T  \big(  \bfM(\xs,\us) - \bfM(\bfx,\bfu)   \big) \dotus  
	  +  \doteu^T  \big(  \bfA(\xs,\us) - \bfA(\bfx,\bfu)  \big) \us  \nonumber \\
	&  & 	  + \doteu^T \big(\bffb(\bfx,\bfu) - \bffb(\xs,\us) \big) 
		 - \doteu^T \bfM(\xs) \du \nonumber \\
		 & = & \sum_{i=1}^4 T_i.  \label{eq:errsystem}
\end{eqnarray}
The left-hand side can be estimated with the help of \eqref{eq:Mxunormequiv},  \eqref{eq:atime1} and the fact that
$\Vert e_u \Vert_{W^{1,\infty}(\Gamma_h[\xs])} \leq 1$:
\begin{eqnarray} 
\lefteqn{ 
 \doteu^T \bfM(\bfx,\bfu) \doteu + \doteu^T \bfA(\bfx,\bfu) \eu } \nonumber \\
& = & \Vert \doteu \Vert_{\bfM(\bfx,\bfu)}^2 + \frac{1}{2} \diff \Vert \eu \Vert_{A(\bfx,\bfu)}^2 - \frac{1}{2} \eu^T \diff \bfA(\bfx,\bfu) \eu \nonumber  \\
& \geq & c \Vert \mat_h e_u \Vert_{L^2(\Gamma_h[\bfx^*])}^2 + \frac{1}{2} \diff \Vert \eu \Vert_{A(\bfx,\bfu)}^2 - 
C  \| e_u \|_{H^1(\Gamma_h[\bfx^*])}  \bigl( \| e_u  \|_{H^1(\Gamma_h[\bfx^*])} + \Vert \mat_h e_u  \Vert_{L^2(\Gamma_h[\bfx^*])}  \bigr) \nonumber \\
& \geq & \frac{c}{2}  \Vert \mat_h e_u \Vert_{L^2(\Gamma_h[\bfx^*])}^2 + \frac{1}{2} \diff \Vert \eu \Vert_{A(\bfx,\bfu)}^2 - C \, \| e_u \|_{H^1(\Gamma_h[\bfx^*])}^2 . \label{eq:lhs}
\end{eqnarray}
Next, \eqref{eq:mdif1} and \eqref{eq:mdif2} together with the fact that $\Vert \mat_h u_h^* \Vert_{W^{1,\infty}(\Gamma_h[\xs])} \leq C$ imply that
\begin{eqnarray}
 T_1  & \leq & C \bigl( \Vert e_u \Vert_{L^2(\Gamma_h[\bfx^*])} + \Vert e_x \Vert_{H^1(\Gamma_h[\bfx^*])} \bigr) \Vert  \mat_h e_u \Vert_{L^2(\Gamma_h[\bfx^*])} \nonumber \\
& \leq & \eps  \Vert  \mat_h e_u \Vert_{L^2(\Gamma_h[\bfx^*])}^2 + C_\eps \bigl( \Vert e_u \Vert_{L^2(\Gamma_h[\bfx^*])}^2 + \Vert e_x \Vert_{H^1(\Gamma_h[\bfx^*])}^2 \bigr) . \label{eq:t1}
\end{eqnarray}
Observing that  $\Vert \mat_h u_h^* \Vert_{W^{1,\infty}(\Gamma_h[\xs])} \leq C$ we infer from \eqref{eq:atime2},  \eqref{eq:atime3}, \eqref{eq:adif1}  and \eqref{eq:adif2}
\begin{eqnarray}
T_2 & = & \diff \bigl( \eu^T \big( \bfA(\xs,\us) - \bfA(\bfx,\bfu)  \big) \us \bigr) - \eu^T \diff \big(  \bfA(\xs,\us) - \bfA(\bfx,\bfu)   \big) \us \nonumber \\
& & \quad  - \,  \eu^T \big(  \bfA(\xs,\us) - \bfA(\bfx,\bfu)   \big) \dotus \nonumber \\
& \leq & \diff \bigl( \eu^T \big(  \bfA(\xs,\us)  - \bfA(\bfx,\bfu)  \big) \us \bigr) \nonumber   \\
& & \quad +
C \bigl(  \| e_u \|_{L^2(\Gamma_h[\bfx^*])} + \Vert \mat_h e_u  \Vert_{L^2(\Gamma_h[\bfx^*])}  \bigr) \| e_u \|_{H^1(\Gamma_h[\bfx^*])}  \nonumber \\
& & \quad + C \bigl(   \| e_x \|_{H^1(\Gamma_h[\bfx^*])}  + \Vert e_v \Vert_{H^1(\Gamma_h[\bfx^*])}  \bigr)  \| e_u \|_{H^1(\Gamma_h[\xs])} \nonumber \\
& \leq & \diff \bigl( \eu^T \big(  \bfA(\xs,\us)   - \bfA(\bfx,\bfu) \big) \us \bigr) + \eps  \Vert  \mat_h e_u \Vert_{L^2(\Gamma_h[\bfx^*])}^2 \nonumber \\
& & \quad + C_\eps \bigl( h^{2k} +  \Vert e_u \Vert_{H^1(\Gamma_h[\bfx^*])}^2+  \Vert e_x \Vert_{H^1(\Gamma_h[\bfx^*])}^2 \bigr), \label{eq:t2}
\end{eqnarray}
where we used \eqref{eq:vstab} in the last step. Arguing as in \cite[(A) (vii)]{MCF} one shows that
\begin{eqnarray}  
T_3 & \leq & C \Vert \mat_h e_u \Vert_{L^2(\Gamma_h[\bfx^*])} \bigl( \Vert e_u \Vert_{H^1(\Gamma_h[\bfx^*])} + \Vert e_x \Vert_{H^1(\Gamma_h[\bfx^*])} \bigr) \nonumber \\
& \leq &  \eps  \Vert  \mat_h e_u \Vert_{L^2(\Gamma_h[\bfx^*])}^2 + C_\eps \bigl( \Vert e_u \Vert_{H^1(\Gamma_h[\bfx^*])}^2 + \Vert e_x \Vert_{H^1(\Gamma_h[\bfx^*])}^2 \bigr) . \label{eq:t6}
\end{eqnarray}
Finally, \eqref{eq:dvuest} implies that
\begin{equation} \label{eq:t7}
T_4 \leq \Vert  \mat_h e_u \Vert_{L^2(\Gamma_h[\bfx^*])} \Vert d_u   \Vert_{L^2(\Gamma_h[\bfx^*])} \leq \eps  \Vert  \mat_h e_u \Vert_{L^2(\Gamma_h[\bfx^*])}^2 + C_\eps h^{2k}.
\end{equation}
If we insert \eqref{eq:t1}--\eqref{eq:t7} into \eqref{eq:lhs},  choose $\eps>0$ small enough we obtain after integration with respect to time and
recalling that $\Vert e_u(0) \Vert_{H^1(\Gamma_h[\xs(0)])} \leq C h^k$ as well as \eqref{eq:adif1}, \eqref{eq:adif2}
\begin{eqnarray*}
\lefteqn{ \frac{c}{4} \int_0^t \Vert  \mat_h e_u \Vert_{L^2(\Gamma_h[\bfx^*])}^2 \d s + \frac{1}{2}  \Vert \eu(t) \Vert_{A(\bfx(t),\bfu(t))}^2 } \\
& \leq & C h^{2k} +  \bigl( \eu(t)^T \big(  \bfA(\xs(t),\us(t))   - \bfA(\bfx(t),\bfu(t)) \big) \us(t) \bigr) \\
& & \; + C \int_0^t \bigl( \Vert e_u \Vert_{H^1(\Gamma_h[\bfx^*])}^2+  \Vert e_x \Vert_{H^1(\Gamma_h[\bfx^*])}^2 \bigr)\d s \\
& \leq &C \Vert e_u(t) \Vert_{H^1(\Gamma_h[\xs(t)])} \bigl( \Vert e_u(t) \Vert_{L^2(\Gamma_h[\xs(t)])} +\Vert e_x(t) \Vert_{H^1(\Gamma_h[\xs(t)])} \bigr) \\
& & \; + C h^{2k} + C \int_0^t \bigl( \Vert e_u \Vert_{H^1(\Gamma_h[\bfx^*])}^2+  \Vert e_x \Vert_{H^1(\Gamma_h[\bfx^*])}^2 \bigr)\d s \\
& \leq & \eps \Vert \nabla_{\Gamma_h} e_u(t) \Vert_{L^2(\Gamma_h[\xs(t)])}^2 + C_\eps \bigl( \Vert e_u(t) \Vert_{L^2(\Gamma_h[\xs(t)])}^2 +\Vert e_x(t) \Vert_{H^1(\Gamma_h[\xs(t)])}^2 \bigr) \\
& & \; + C h^{2k} + C \int_0^t \bigl( \Vert e_u \Vert_{H^1(\Gamma_h[\bfx^*])}^2+  \Vert e_x \Vert_{H^1(\Gamma_h[\bfx^*])}^2 \bigr)\d s.
\end{eqnarray*}
If we now use \eqref{eq:Axunormequiv} and choose $\eps>0$ small enough we infer that
\begin{eqnarray}
\lefteqn{ \hspace{-1cm} 
\int_0^t \Vert  \mat_h e_u \Vert_{L^2(\Gamma_h[\bfx^*])}^2 \d s + \Vert \nabla_{\Gamma_h} e_u(t) \Vert_{L^2(\Gamma_h[\xs(t)])}^2 } \label{eq:ustab1} \\
& \leq & \hat c  \bigl( \Vert e_u(t) \Vert_{L^2(\Gamma_h[\xs(t)])}^2 +\Vert e_x(t) \Vert_{H^1(\Gamma_h[\xs(t)])}^2 \bigr) \nonumber \\
&  & \; + C h^{2k} + C \int_0^t \bigl( \Vert e_u \Vert_{H^1(\Gamma_h[\bfx^*])}^2+  \Vert e_x \Vert_{H^1(\Gamma_h[\bfx^*])}^2 \bigr)\d s. \nonumber 
\end{eqnarray} 
Multiplying the inequalities \eqref{eq:xstab}, \eqref{eq:ustab} by $\hat c+1$ adding the result to \eqref{eq:ustab1} we finally obtain after choosing $\eps$ sufficiently small
\begin{displaymath}
\Vert (e_x,e_u)(t) \Vert_{H^1(\Gamma_h[\xs(t)])}^2 \leq C h^{2k} +  C \int_0^t  \Vert (e_x,e_u)(s) \Vert_{H^1(\Gamma_h[\bfx^*(s)])}^2 \d s, \; 0 \leq t <  \widehat{T}_h,
\end{displaymath}
from which we deduce with the help of Gronwall's inequality and \eqref{eq:vstab} that 
\begin{equation} \label{eq:err1}
\Vert (e_x,e_v,e_u)(t) \Vert_{H^1(\Gamma_h[\xs(t)])} \leq C h^{k}, \quad 0 \leq t < \widehat{T}_h.
\end{equation}
Since $k \geq 2$ and $d \in \lbrace 1,2,3 \rbrace$ we have that  $k - \frac{d}{2} \geq \tfrac{1}{2}$ so that an inverse estimate implies 
\begin{displaymath}
\Vert (e_x,e_v,e_u)(t) \Vert_{W^{1,\infty}(\Gamma_h[\xs(t)])} \leq C h^{k-\frac{d}{2}} \leq C h^{\frac{1}{2}} \leq \frac{1}{2} h^{\frac{1}{3}},  \quad 0 \leq t < \widehat{T}_h ,
\end{displaymath}
for $0<h \leq h_2$ if  $h_2 \leq h_1$ is small enough. Recalling \eqref{eq:hatTh} we therefore must have $\widehat{T}_h=T$. In order to prove
the error estimates in Theorem  \ref{theorem: error estimates - modified aMCF} we use 
\begin{eqnarray*}
\lefteqn{ \hspace{-1cm}
\Vert (\nu_h^L,V_h^L)- (\nu,V) \Vert_{H^1(\Gamma[X])} = \Vert u_h^L - u \Vert_{H^1(\Gamma[X])} } \\
& \leq & \Vert u_h^L - (u_h^*)^\ell \Vert_{H^1(\Gamma[X])} + \Vert (u_h^*)^\ell - u \Vert_{H^1(\Gamma[X])} \\
& \leq & C \Vert e_u \Vert_{H^1(\Gamma_h[\xs])} + Ch^k \leq C h^k ,
\end{eqnarray*} 
where we applied \eqref{eq:ritzest}, \eqref{eq:err1} and the definition of $u_h^L$. \qed

\section{A stabilized algorithm}
\label{section:regularization}

%

For ``crystalline-like'' anisotropies, e.g.~those in Figure~\ref{fig:BGNwulff} (with small $\eps$), the proposed algorithm greatly benefits from a stabilization of the second derivative of the anisotropy function.

In the weak formulation \eqref{eq:coupled system - weak - modified aMCF}, let us
\begin{equation}
\label{eq:regulartization}
	\text{replace} \quad \gamma''(\nu) \qquad \text{with} \qquad \widehat{\ga''}(\nu) := \ga''(\nu) + \nu \nu^T .
\end{equation}

It is crucial to note that $\nu \nu^T$ projects onto the normal space, hence it does not change the $H^1$ bilinear form and the 
original continuous equations \eqref{eq:evolution eq - aMCF} and \eqref{eq:coupled system - weak - modified aMCF} do not change.  Naturally, since the discrete normal $\nu_h$ and the true normal of the discrete surface $\nu_{\Ga_h}$ differ, this is not any more true for the semi-discretization:
\begin{equation*}
	\widehat{\ga''}(\nu_h) = \ga''(\nu_h) + \nu_h \nu_h^T .
\end{equation*}
This slight difference makes the corresponding stiffness-matrix 
\begin{equation*}
	\widehat{\bfA}(\bfx,\bfu) \quad \text{more coercive}.
\end{equation*}

For \emph{isotropic} mean curvature flow, $\gamma(w) = |w|$, from Remark~\ref{remark:isotropic gamma} (cf.~Example~\ref{example:anisotropies}) we recall 
\begin{equation*}
	\ga''(\nu) = I_{d+1} - \nu \nu^T .
\end{equation*}
Using this in the discretization \eqref{eq:semidiscretization - modified aMCF}, the normal projection would remain. On the other hand, using the regularization would set $\widehat{\ga''}(\nu) = \ga''(\nu) + \nu \nu^T = I_{d+1}$. 
The mean curvature flow algorithm proposed in \cite{MCF} does not use  $\ga''(\nu_h) = I_{d+1} - \nu_h \nu_h^T$, but the identity matrix which coincides with $\widehat{\ga''}(\nu_h) = I_{d+1}$.

It is crucial to observe that the results of Lemma~\ref{lemma:solution-dependent mass matrix diff}--\ref{lemma:quasi-linear stiffness matrix diff} remain valid for the stabilised stiffness-matrix $\widehat{\bfA}(\bfx,\bfu)$, since the used coercivity and Lipschitz properties of $\ga''$ are, respectively, improved and unaffected (up to constants) in the case of $\widehat{\ga''}$.

Therefore, we are confident that the statement of Theorem~\ref{theorem: error estimates - modified aMCF} holds for the stabilised algorithm as well.

\section{Numerical experiments}
\label{sec:numericalexperiments}

We performed  numerical simulations and experiments for the anisotropic mean curvature flow \eqref{eq:anisotropicMCF - problem} formulated as the coupled system \eqref{eq:coupled system - weak - modified aMCF} in which: 
\begin{itemize}
	\item[-] The rate of convergence in an example involving a self-similar exact solution is studied in order to illustrate the  theoretical results of Theorem~\ref{theorem: error estimates - modified aMCF}.
	\item[-] Various anisotropic mean curvature flows with ``crystalline-like'', and asymmetric anisotropies, reporting on surface evolutions and on the decreasing energy $\calE_{\ga}(\Ga) = \int_\Ga \ga(\nu)$, see \eqref{eq:anenergy}.
	\item[-] We report on anisotropic mean curvature flows with asymmetric anisotropy, using $\ga$ from equation~(8.9) in \cite{DeckelnickDE2005} with $p = 4$ instead of $2$.
	\item[-] We report on the effects of the regularization from Section~\ref{section:regularization}.
\end{itemize}

The numerical experiments use quadratic evolving surface finite elements, implemented in the Matlab package  $\ell$FEM \cite{ellFEM}. For the computation of all finite element matrices and vectors it uses high-order quadratures so that the resulting quadrature error does not feature in the discussion of the accuracies of the schemes. 
The temporal discretization uses linearly-implicit BDF methods of order $1,\dotsc,5$, see Section~\ref{section:BDF}. 
The initial meshes were all generated using DistMesh \cite{distmesh}, without taking advantage of any symmetry of the surface.

\subsection{Linearly-implicit backward differentiation formulae}
\label{section:BDF}

For the time discretization of the system of ordinary differential equations \eqref{eq:matrix-vector form - modified aMCF} we use a $q$-step linearly implicit backward difference formula (BDF method). For a step size $\tau>0$, and with $t_n = n \tau \leq T$, we determine the approximations to all variables $\bfx^n$ to $\bfx(t_n)$, $\bfv^n$ to $\bfv(t_n)$,  and $\bfu^n$ to $\bfu(t_n)$, by the fully discrete system of \emph{linear} equations, for $n \geq q$,
\begin{subequations}
	\label{eq:BDF}
	\begin{align}
		\label{eq:BDF -- x}
		\dot \bfx^n =&\ \bfv^n , \\
		\label{eq:BDF -- v}
		\bfv^n =&\ \bfV^n \bullet \bfn^n , \\
		\label{eq:BDF -- u}
		\bfM^{[d+2]}(\widetilde \bfx^n,\widetilde \bfu^n) \dot{\bfu}^n + \bfA^{[d+2]}(\widetilde \bfx^n,\widetilde \bfu^n) \bfu^n =&\ \bff(\widetilde \bfx^n,\widetilde \bfu^n) , 
	\end{align}
\end{subequations}
where we denote the discretised time derivatives and the extrapolated values by
\begin{equation}
	\label{eq:backward differences def}
	\dot \bfx^n = \frac{1}{\tau} \sum_{j=0}^q \delta_j \bfx^{n-j} , \andquad \widetilde \bfx^n = \sum_{j=0}^{q-1} \gamma_j \bfx^{n-1-j} , \qquad n \geq q .
\end{equation}
Both notations may also be used for all other variables.

The starting values $\bfx^i$ and $\bfu^i$ ($i=0,\dotsc,q-1$) are assumed to be given. 
The initial values can be precomputed using either a lower order method with smaller step sizes, or an implicit Runge--Kutta method.

The method is determined by its coefficients, given by 
\begin{equation*}
	\delta(\zeta)=\sum_{j=0}^q \delta_j \zeta^j=\sum_{\ell=1}^q \frac{1}{\ell}(1-\zeta)^\ell 
	\andquad 
	\gamma(\zeta) = \sum_{j=0}^{q-1} \gamma_j \zeta^j = (1 - (1-\zeta)^q)/\zeta .
\end{equation*}
The classical BDF method is known to be zero-stable for $q\leq6$ and to have order $q$; see \cite[Chapter~V]{HairerWannerII}.
This order is retained, for $q \leq 5$ see \cite{LubichMansourVenkataraman_bdsurf}, also by the linearly-implicit variant using the above coefficients $\gamma_j$; cf.~\cite{AkrivisLubich_quasilinBDF,AkrivisLiLubich_quasilinBDF}.
In \cite{Akrivisetal_BDF6}, the multiplier techniques of \cite{NevanlinnaOdeh} have been recently extended, via a new approach, to the six-step BDF method.

\subsection{Convergence test using a self-similar ellipsoid solution}

Following \cite[Section~5.1]{BGN_anisotropic} we perform a numerical experiment for a self-similar ellipsoid. 
For  $\eps > 0$ we define
\begin{equation}
\label{eq:num exp - gamma and beta}
	\ga(w) = \sqrt{w \cdot G w}, \text{ with } G = \diag(1,\eps^2,\eps^2) , \andquad  \beta(w) = \frac{1}{\ga(w)} ,
\end{equation}
that is the anisotropy $\ga$ from Example~\ref{example:anisotropies} (b) with a special diagonal matrix $G$.
 With this choice, one obtains 
 $$\ga^*(w) = \sqrt{w \cdot G^* w}, \quad \text{ with } \quad G^* = \diag\Big(1,\frac1{\eps^2},\frac1{\eps^2}\Big) $$
 and as discussed in the introduction the boundary of the one ball of $\ga^*$ shrinks in a self-similar way.
 We hence choose the set of all points with $\ga^*(w)=1$ as initial surface $\Ga^0$, i.e., we set
  $\Ga^0 := \{ x \in \R^3 \mid d(x,0) = 0 \}$ expressed with the help of  the level-set function
\begin{equation*}
	d(x,0) = x_1^2 + x_2^2 / \eps^2 + x_3^2 / \eps^2 - 1^2 .
\end{equation*}
The exact solution of 
$$\frac 1{\gamma(\nu)} V =- H_\gamma$$
is the hypersurface $\Ga(t)$ which is for $t\in[0,0.25)$  the zero level-set of the level-set function
\begin{equation*}
	d(x,t) = x_1^2 + x_2^2 / \eps^2 + x_3^2 / \eps^2 - ( \sqrt{1-4t} )^2 .
\end{equation*}
The fact that $\Ga(t)$ solves $\frac 1{\gamma(\nu)} V =- H_\gamma$ can be shown as follows. Compute the outward normal field $\nu$ and normal velocity $V$ with the help of the formulas, see, e.g.,~\cite[Section~2.1]{DziukElliott_ESFEM} (using that $\nb d \neq 0$),
\begin{equation}
\label{eq:V and nu def from level set function}
	\nu = \frac{\nb d}{|\nb d|}, \andquad V = - \frac{\pa_t d}{|\nb d|} .
\end{equation}
A straightforward computation, using $\nabla_\Gamma \cdot \gamma'(\nu) =\nabla \cdot  \gamma'(\nu) -
\nu\cdot ((\nabla \gamma'(\nu) )^T \nu )
$  gives $H_\gamma= \frac 2{\sqrt{1-4t}}= - \frac 1{\gamma(\nu)} V  $
which shows that $\Ga(t)$ on $[0,0.25) $ solves $\frac 1{\gamma(\nu)} V =- H_\gamma$.
Note that the surfaces  $\Gamma(t)$ are identical to the sets $\sqrt{1-4t} \Ga^0$ and satisfy $d(\cdot,t) = 0$ for all $t \in [0,0.25]$.

We report on a convergence experiment for the above described self-similar anisotropic surface evolution with $\eps = 0.5$. 
We started the algorithms from the nodal interpolations of the exact initial values $\Ga^0$, $\nu^0$, and $V^0 = - \beta(\nu^0)^{-1} \nbg \cdot ( \ga'(\nu^0)) \nu^0$, subsequent initial values $i=1,\dotsc,q-1$ were obtained by lower order BDF steps. 
In order to illustrate the convergence results of Theorem~\ref{theorem: error estimates - modified aMCF}, we have computed the errors between the numerical solution \eqref{eq:BDF} and the exact solution of \eqref{eq:anisotropicMCF - problem}. The exact nodal values of $\Ga[X]$ were obtained by solving the ODE $\pa_t X = V \nu$, with \eqref{eq:V and nu def from level set function}, using, since the surface evolution is not a stiff problem, via the explicit Adams method of the same order.

The numerical solutions and energy obtained by our algorithm are plotted in Figure~\ref{fig:solutions_conv}.

\begin{figure}[htbp]
	\centering
	\includegraphics[width=\textwidth, trim={85 30 55 0}, clip]{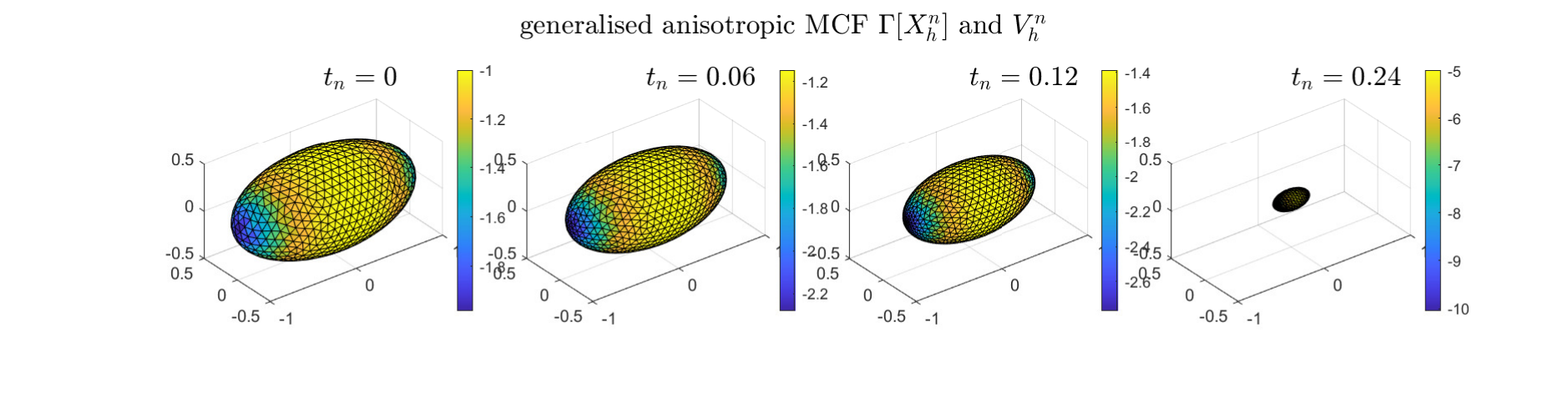}
	
	\vspace{2mm} 
	
	\includegraphics[width=\textwidth, trim={85 5 55 30}, clip]{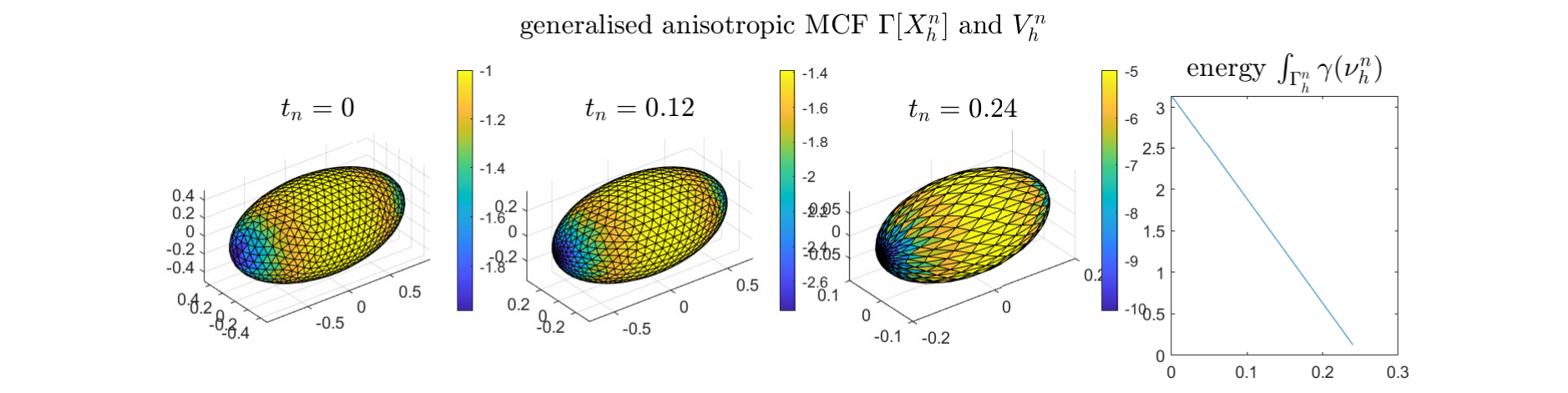}
	
	\caption{Numerical solutions of the self-similar flow using the BDF2 / quadratic ESFEM discretization (without and with rescaling), and the decay of the anisotropic energy.}
	\label{fig:solutions_conv}
\end{figure}

In Figure~\ref{fig:anisotropic flow - convplot space - ellipsoid - self-similar} and \ref{fig:anisotropic flow - convplot time - ellipsoid - self-similar} we report the errors between the numerical solution and the interpolation of the exact solution until the final time $T=0.24$, for a sequence of meshes (see plots) and for a sequence of time steps $\tau_{k+1} = \tau_k / 2$. 
The double-logarithmic plots report on the $L^\infty(H^1)$ norm of the errors against the mesh width $h$ in Figure~\ref{fig:anisotropic flow - convplot space - ellipsoid - self-similar}, and against the time step size $\tau$ in Figure~\ref{fig:anisotropic flow - convplot time - ellipsoid - self-similar}.
The lines marked with different symbols and different colours correspond to different time step sizes and to different mesh refinements in Figure~\ref{fig:anisotropic flow - convplot space - ellipsoid - self-similar} and \ref{fig:anisotropic flow - convplot time - ellipsoid - self-similar}, respectively.

In Figure~\ref{fig:anisotropic flow - convplot space - ellipsoid - self-similar} we can observe two regions: a region where the spatial discretization error dominates, matching the $\calO(h^2)$ order of convergence of Theorem~\ref{theorem: error estimates - modified aMCF} (see the reference lines), and a region, with small mesh size, where the temporal discretization error dominates (the error curves flatten out). For Figure~\ref{fig:anisotropic flow - convplot time - ellipsoid - self-similar}, the same description applies, but with reversed roles. Convergence of fully discrete method is not shown, but $\calO(\tau^q)$ is expected for the BDF methods of order $q = 1,\ldots,5$, cf.~\cite{MCF}.

The spatial convergence as shown by Figure~\ref{fig:anisotropic flow - convplot space - ellipsoid - self-similar} is in agreement with the theoretical convergence results (note the reference lines). Similarly, the temporal convergence as shown by Figure~\ref{fig:anisotropic flow - convplot time - ellipsoid - self-similar} is in agreement with the \emph{expected} convergence rate of the BDF2 method, cf.~\cite{MCF}.

\begin{figure}[htbp]
	\centering
	\includegraphics[width=\textwidth, trim={0 0 0 20}, clip]{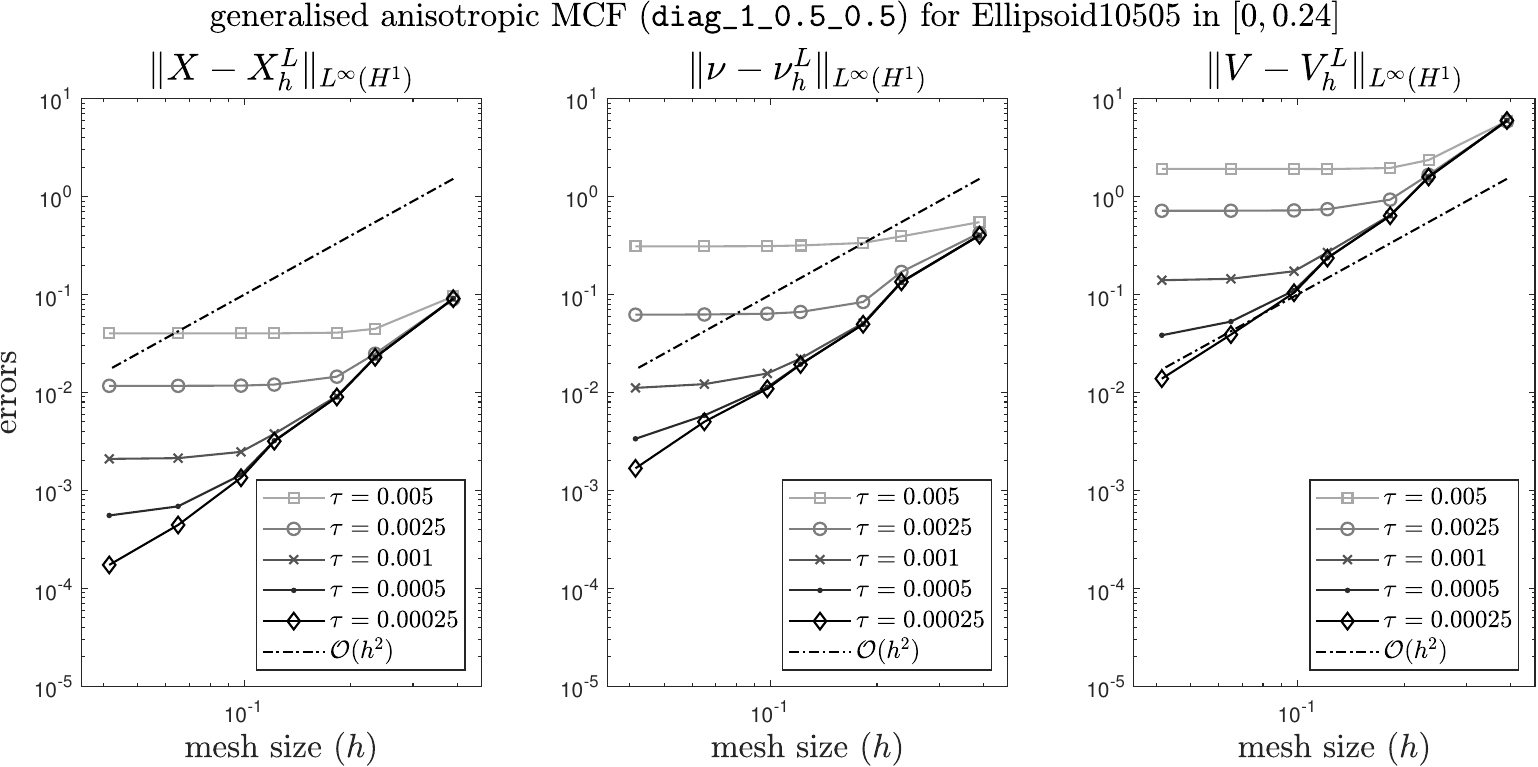}
	\caption{Spatial convergence of the BDF2 / quadratic ESFEM discretization of the anisotropic flow for the self-similar ellipsoid with $T = 0.24$.}
	\label{fig:anisotropic flow - convplot space - ellipsoid - self-similar}
\end{figure}

\begin{figure}[htbp]
	\centering
	\includegraphics[width=\textwidth, trim={0 0 0 20}, clip]{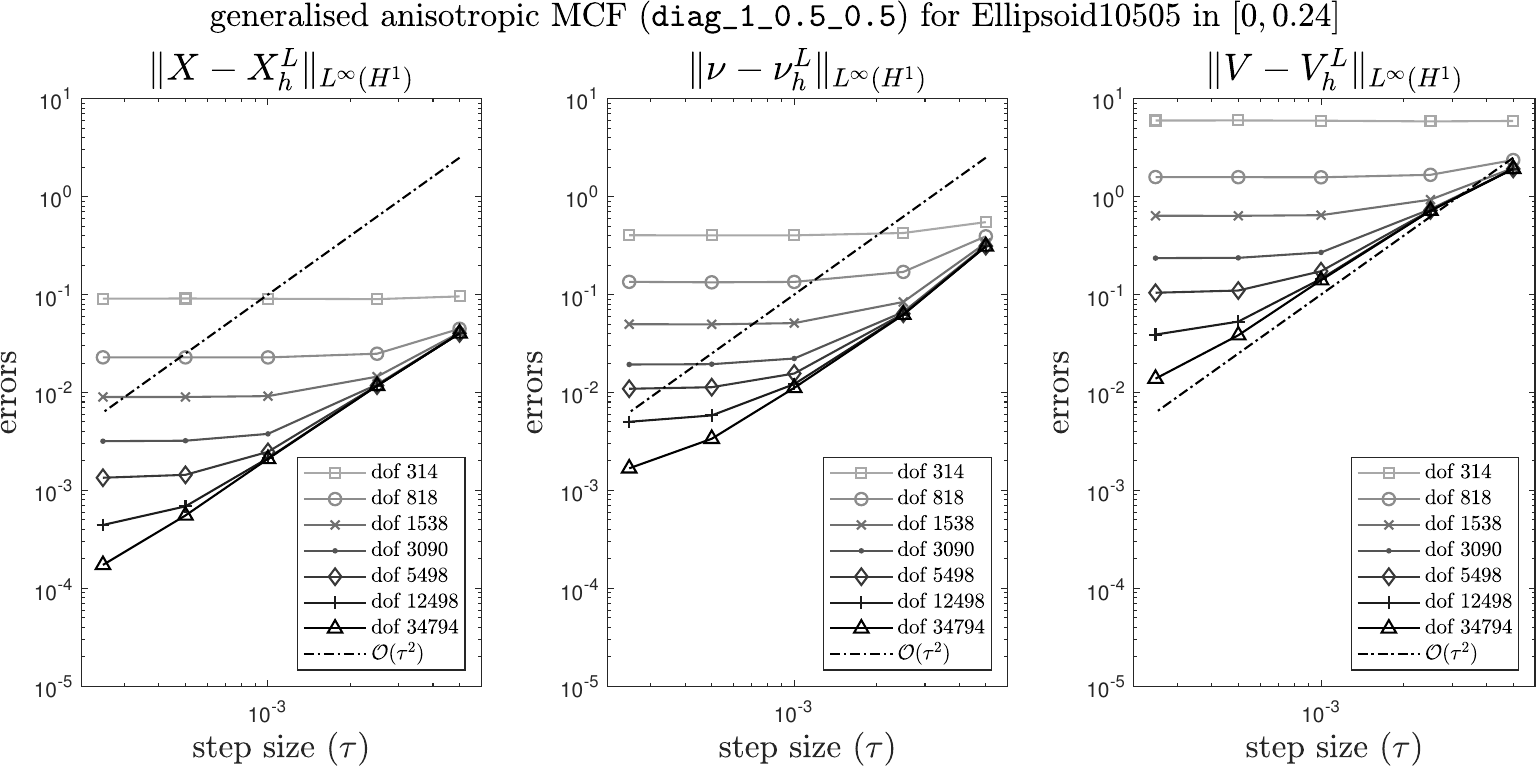}
	\caption{Temporal convergence of the BDF2 / quadratic ESFEM discretization of the anisotropic flow for the self-similar ellipsoid with $T = 0.24$.}
	\label{fig:anisotropic flow - convplot time - ellipsoid - self-similar}
\end{figure}

Most closed surfaces shrink to a Wulff-shaped point in finite time under anisotropic mean curvature flow \eqref{eq:anisotropicMCF - problem}, similarly as (uniformly convex) closed surfaces shrink to a round point in finite time under mean curvature flow, \cite[Theorem~1.1]{Huisken1984}.

Figure~\ref{fig:convergence to ellipse} reports on the evolution of a spherical initial surface under anisotropic mean curvature flow setting \eqref{eq:num exp - gamma and beta} with $\eps = 0.1$.

\begin{figure}[htbp]
	\centering
	\includegraphics[width=\textwidth, trim={85 10 45 5}, clip]{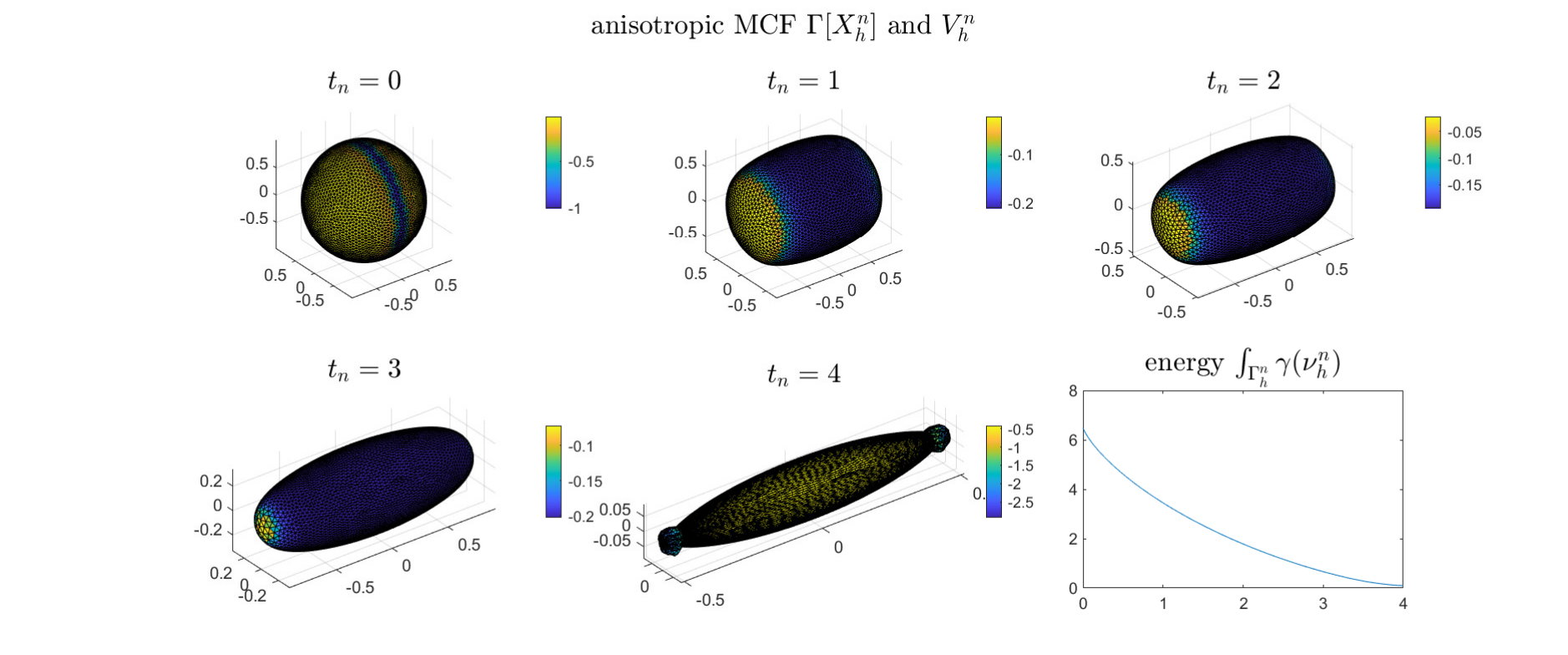}
	
	\caption{Numerical solutions of the ellipsoidal anisotropic MCF using the BDF2 / quadratic ESFEM discretization ($\eps = 0.1$, $\textnormal{dof} = 3882$, and $\tau = 10^{-3}$), and the decay of the anisotropic energy.}
	\label{fig:convergence to ellipse}
\end{figure}

\subsection{Crystalline-like anisotropies}

We have performed some numerical experiments with two ``\emph{crystalline-like}'' anisotropies from \cite{BGN_anisotropic}. In particular we use the cubic and hexagonal anisotropy function (regularized with $\eps = 0.1$) described in Example~\ref{example:anisotropies} (c), and see Figure~\ref{fig:BGNwulff} depicting the corresponding Wulff shapes and \cite[page~9]{BGN_anisotropic} for precise definitions.

Note that both these anisotropies are quite irregular (the Wulff shapes having $\eps$-regularized edges), whereas our numerical algorithm is particularly suitable for regular anisotropies, see Figure~\ref{fig:convergence to ellipse} with $\ga$ from \eqref{eq:num exp - gamma and beta} with $\eps = 0.1$ and $\beta \equiv 1$. Nevertheless, the algorithm performs quite well, see Figure~\ref{fig:cubic} and \ref{fig:hexagonal}.

\begin{figure}[htbp]
	\centering
	\includegraphics[width=\textwidth, trim={85 5 45 5}, clip]{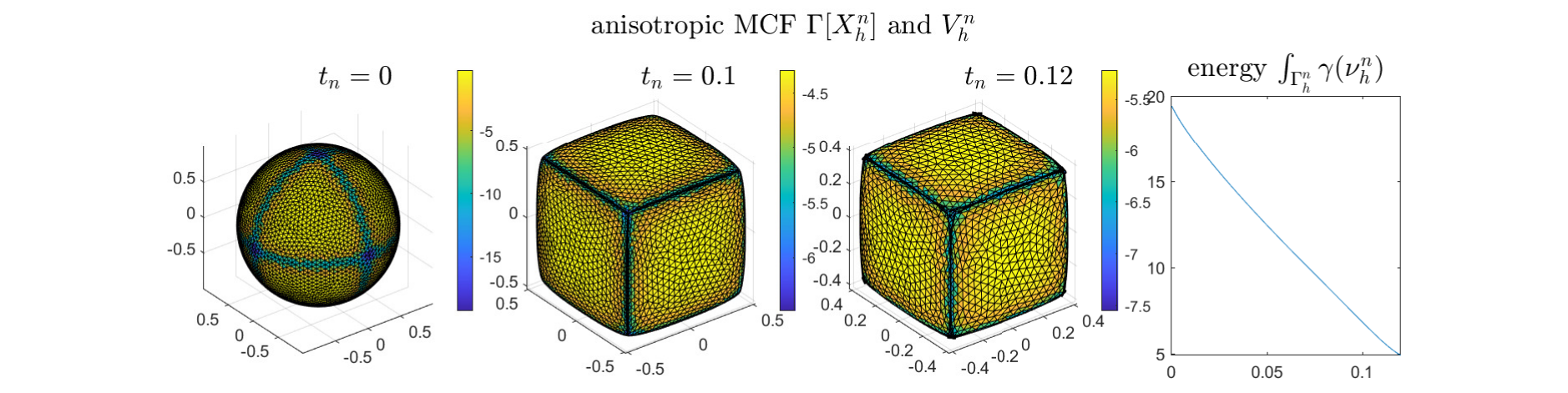}
	
	\caption{Numerical solutions of the cubic ($\eps = 0.1$) anisotropic MCF using the BDF2 / quadratic ESFEM discretization ($\textnormal{dof} = 3882$ and $\tau = 10^{-3}$), and the decay of the anisotropic energy.}
	\label{fig:cubic}
\end{figure}

\begin{figure}[htbp]
	\centering
	\includegraphics[width=\textwidth, trim={85 5 45 5}, clip]{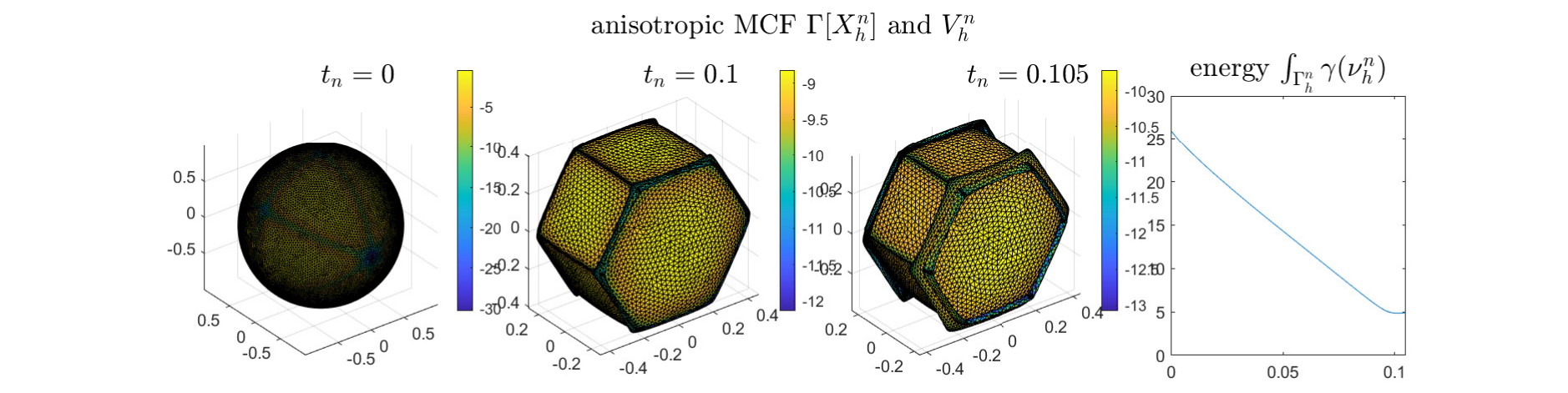}
	
	\caption{Numerical solutions of the hexagonal ($\eps = 0.1$) anisotropic MCF using the BDF2 / quadratic ESFEM discretization ($\textnormal{dof} = 12066$ and $\tau = 10^{-3}$), and the decay of the anisotropic energy.}
	\label{fig:hexagonal}
\end{figure}

\subsection{Asymmetric anisotropy}

We have performed numerical experiments for anisotropic mean curvature flow \eqref{eq:anisotropicMCF - problem} with \emph{asymmetric} anisotropy
\begin{equation}
\label{eq:asymmetric anisotropy}
	\gamma(w) = \Big( \big(5.5 + 4.5\, \mathrm{sign}(w_1) \big) w_1^p + w_2^p + w_3^p \Big)^{1/p} , \qquad \text{with} \quad p := 4,
\end{equation}
from equation~(8.9) in \cite{DeckelnickDE2005} with exponent $p = 4$ instead of $2$. See \cite[Figure~8.2]{DeckelnickDE2005} for Frank diagram and Wulff shape with $p=2$. 

In the spirit of Example~\ref{example:anisotropies} (b): Setting $a(w_1) = 5.5 + 4.5\, \mathrm{sign}(w_1)$ and
\begin{equation*}
	G(w_1) := \diag(a(w_1),1,1) \in \R^{3 \times 3} ,
\end{equation*}
and therefore rewriting
\begin{equation*}
	\gamma(w) = \Big( w \cdot G(w_1) w^{p-1} \Big)^{1/p} ,
\end{equation*}
where $w^{q}|_j = w_j^{q}$ for all $j = 1,2,3$ and any $q$.

We further keep $p \geq 4$ general, and note that (for such $p$) $(\mathrm{sign}(x)x^p)' = (|x|x^{p-1})' = p |x| x^{p-2} = p \, \mathrm{sign}(x)x^{p-1}$ and $(|x|x^{p-1})'' = p(p-1) |x| x^{p-3} = p(p-1) \, \mathrm{sign}(x)x^{p-2}$.

The derivative of $\gamma$ is then given by
\begin{equation*}
	\gamma'(w) =
	\big( \gamma(w) \big)^{1-p} 
	G(w_1) w^{p-1},
\end{equation*}
while second derivative is given by
\begin{align*}
	\gamma''(w) 
	= &\ (p-1) \big( \gamma(w) \big)^{1-p} \diag\big( G(w_1) w^{p-2} \big) \\
	&\ + (1-p) \big( \gamma(w) \big)^{1-2p} \big( G(w_1) w^{p-1} \big) \otimes \big( G(w_1) w^{p-1} \big) ,
\end{align*}
cf.~the derivatives in Example~\ref{example:anisotropies} (b).


Using the anisotropy \eqref{eq:asymmetric anisotropy} (and $\beta \equiv 1$), the surface evolution and the decay of the anisotropic energy are reported in Figure~\ref{fig:asymmetric}.
\begin{figure}[htbp]
	\centering
	\includegraphics[width=\textwidth, trim={85 5 45 5}, clip]{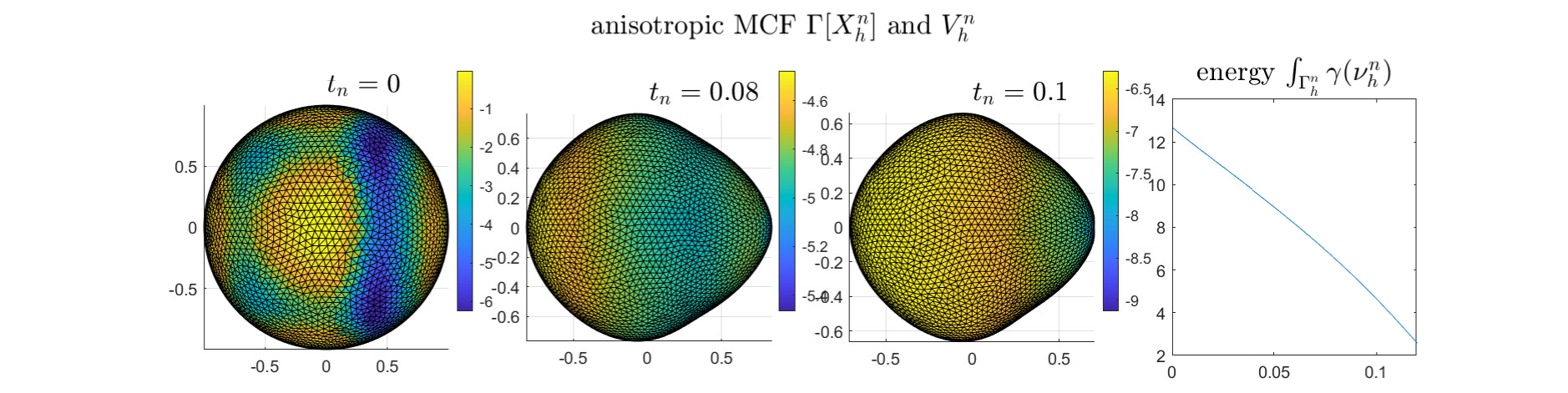}
	
	\caption{Numerical solutions of the anisotropic MCF with the asymmetric function \eqref{eq:asymmetric anisotropy} using the BDF2 / quadratic ESFEM discretization ($\textnormal{dof} = 3882$ and $\tau = 10^{-3}$), and the decay of the anisotropic energy.}
	\label{fig:asymmetric}
\end{figure}

\clearpage

\subsection{Experiments with the stabilized algorithm}
\label{section:regularization experiments}

We have performed some numerical experiments using the stabilization of Section~\ref{section:regularization}:
\begin{equation*}
	\widehat{\ga''}(\nu) := \ga''(\nu) + \nu \nu^T .
\end{equation*}

In Figure~
\ref{fig:regularization_comparison 0.05} we compare the numerical solutions without and with the above stabilization.

\begin{figure}[htp]
	\centering
	
	{\small $\eps = 0.1$}
	
	\includegraphics[width=\textwidth, trim={85 10 45 25}, clip]{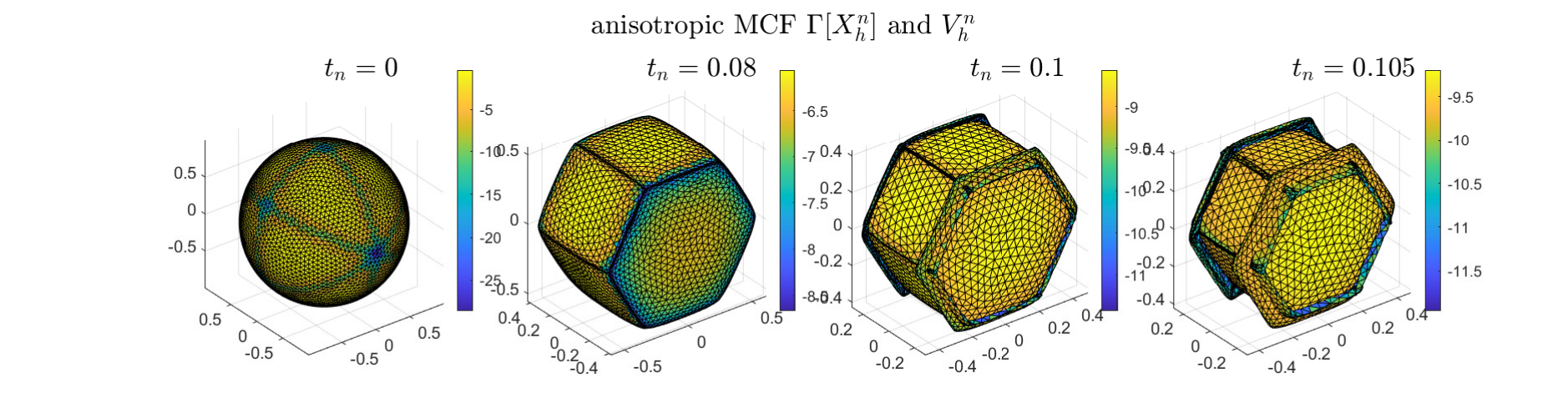}
	
	\includegraphics[width=\textwidth, trim={85 10 45 25}, clip]{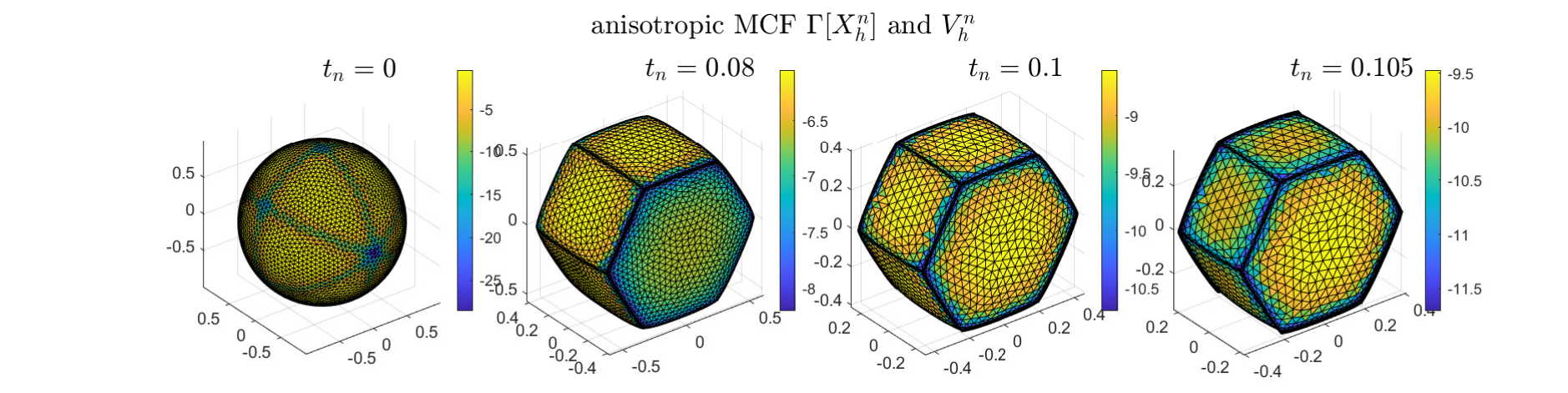}
	
%
%
	\vspace{4.2mm} 

	\centering
	{\small $\eps = 0.05$}
	\includegraphics[width=\textwidth, trim={85 10 45 25}, clip]{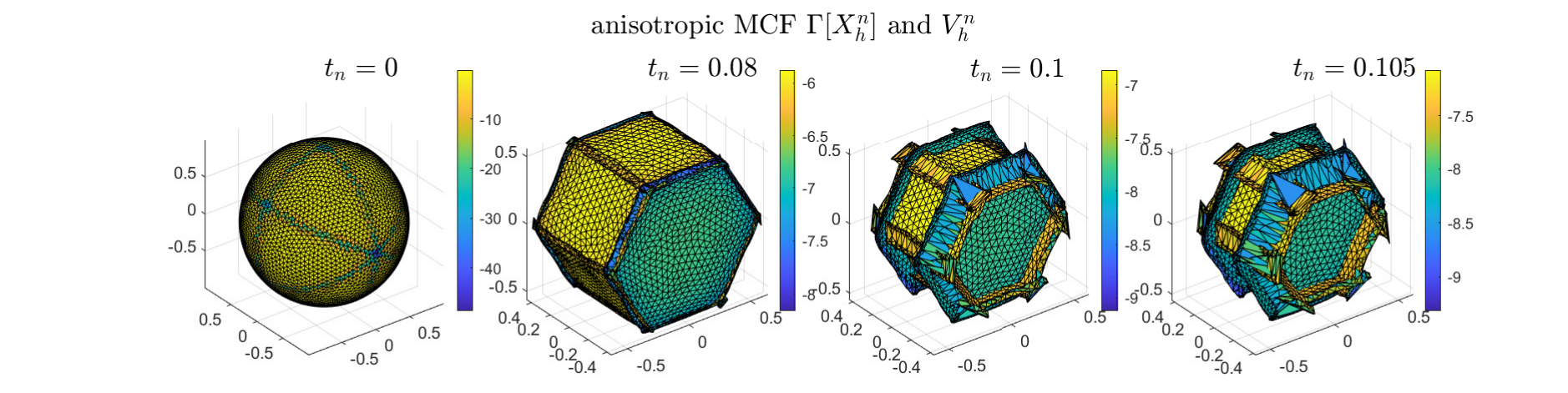}
	
	\includegraphics[width=\textwidth, trim={85 10 45 25}, clip]{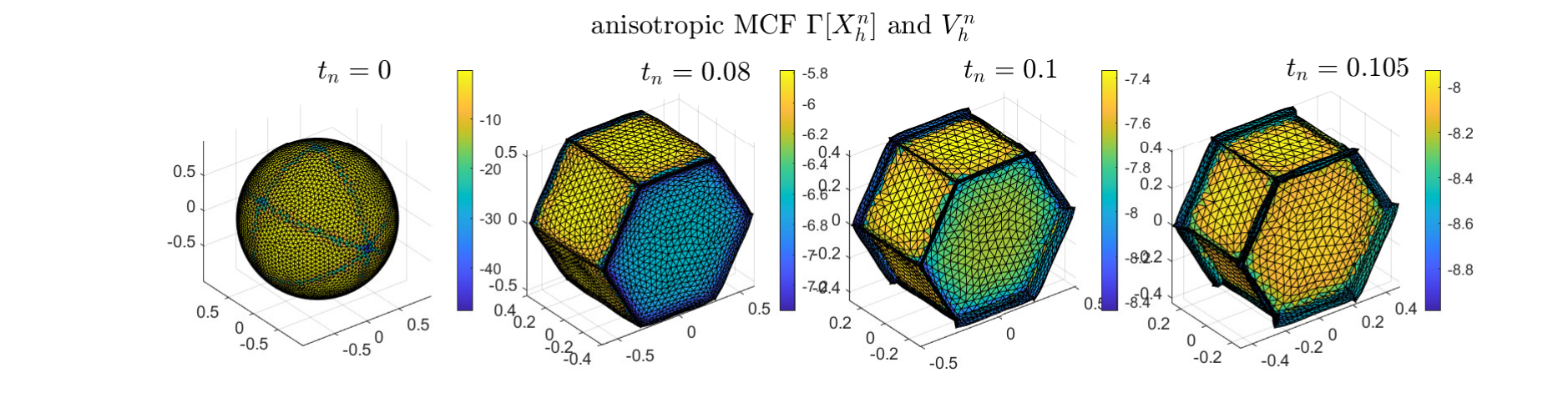}
	
	\caption{Stabilization effect: comparing numerical solutions of with hexagonal anisotropy ($\eps = 0.1$ and $\eps = 0.05$) odd rows without, and even rows with stabilization.}
	\label{fig:regularization_comparison 0.05}
\end{figure}

\clearpage

\section*{Acknowledgments}

The work of Harald Garcke is supported by the DFG Research Training Group 2339 \emph{IntComSin} -- Project-ID 321821685.

The work of Bal\'azs Kov\'acs is funded by the Heisenberg Programme of the Deutsche Forschungsgemeinschaft (DFG, German Research Foundation) -- Project-ID 446431602,
and by the DFG Research Unit FOR 3013 \textit{Vector- and tensor-valued surface PDEs} (BA2268/6–1).

\appendix

\section{Proofs of interchange formulas}

The following  interchange formulas for coordinate-wise differential operators were proved in \cite[Lemma~2.4 and 2.6]{DziukKronerMuller}:
\begin{subequations}
	\begin{align}
		\label{eqA:interchange formula - basic}
		\D_i \D_k u = &\ \D_k \D_i u + A_{kl} \D_l u \nu_i - A_{il} \D_l u \nu_k , \\
		\label{eqA:interchange formula - mat-D}
		\mat (\D_i u) 
		= &\ \D_i (\mat u) - \big( \D_i v_j - \nu_k \nu_i \D_j v_k \big) \D_j u .
	\end{align}
\end{subequations}

The following lemma translates these to surface gradient and surface divergence operators. We remind that we use the convention that the divergence of a matrix is computed column-wise.
\begin{lemma}[interchange formulas]
\label{lemma:interchange formulas}
	Let $u \colon \Ga(t) \to \R$ and $w \colon \Ga(t) \to \R^{d+1}$ be $C^2$ functions on a $C^2$ surface $\Ga(t)$ evolving with velocity $v$. Then the following identities hold
	\begin{align}
		\label{eqA:interchange formula - Hessian-transpose}
		\nbg^2 u = &\ \big( \nbg^2 u \big)^T + \nu \otimes (A \nbg u) - (A \nbg u) \otimes \nu , \\
		\label{eqA:interchange formula - grad-mat}
		\mat \big( \nbg u \big) = &\ \nbg ( \mat u ) - \big( \nbg v  - \nu \nu^T (\nbg v)^T \big) \nbg u , \\
		\label{eqA:interchange formula - grad-div}
		\nbg \big( \nbg \cdot w \big) = &\ \laplace_\Ga w + (A : \nbg w) \nu - A (\nbg w \nu) , \\
		\label{eqA:interchange formula - div-mat}
		\mat \big( \nbg \cdot w \big) = &\ \nbg \cdot ( \mat w ) - (\nbg v)^T : \nbg w + \big(\nbg v \nu \nu^T\big) : \nbg w .
	\end{align}
\end{lemma}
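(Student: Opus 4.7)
The plan is to derive each of the four identities by writing both sides in Cartesian components, substituting the componentwise formulas \eqref{eqA:interchange formula - basic} and \eqref{eqA:interchange formula - mat-D}, and then repackaging the result in intrinsic matrix/vector notation. Throughout I adopt the paper's conventions $(\nbg w)_{ij} = \D_i w_j$, $(\nbg \cdot B)_j = \D_i B_{ij}$ (column-wise divergence), and $(M\nu)_i = M_{ij}\nu_j$.

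The first two identities are essentially bookkeeping. For \eqref{eqA:interchange formula - Hessian-transpose}, I would apply \eqref{eqA:interchange formula - basic} with $k \to j$ and read off the three right-hand terms as the $(i,j)$ entries of $(\nbg^2 u)^T$, of $\nu \otimes (A\nbg u)$ and of $(A\nbg u)\otimes \nu$. For \eqref{eqA:interchange formula - grad-mat}, since the material derivative commutes with the selection of the $i$-th component, a single application of \eqref{eqA:interchange formula - mat-D} to $\mat(\D_i u)$ gives $\D_i(\mat u) - (\D_i v_j - \nu_k\nu_i \D_j v_k)\D_j u$, and the bracketed tensor is exactly the $(i,j)$ entry of $\nbg v - \nu\nu^T(\nbg v)^T$.

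The two divergence-type identities each require one additional rearrangement. For \eqref{eqA:interchange formula - grad-div}, I would start from $(\nbg(\nbg\cdot w))_i = \sum_j \D_i\D_j w_j$ and apply \eqref{eqA:interchange formula - basic} summand-by-summand with $u = w_j$. The symmetry of $A$ converts $\sum_{j,l}A_{jl}\D_l w_j$ into $A:\nbg w$, the sum $\sum_{j,l}A_{il}\D_l w_j\,\nu_j$ is the $i$-th component of $A(\nbg w\,\nu)$, and the remaining sum $\sum_j \D_j\D_i w_j$ is precisely $(\nbg\cdot(\nbg w)^T)_i$, identified with $(\laplace_\Ga w)_i$ under the vector-Laplacian convention already used in \eqref{eq:three identities - H and nu-aniso}. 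For \eqref{eqA:interchange formula - div-mat}, a dual calculation starting from $\mat(\nbg\cdot w) = \sum_i \mat(\D_i w_i)$ and inserting \eqref{eqA:interchange formula - mat-D} produces three sums that I would rewrite respectively as $\nbg\cdot(\mat w)$, $(\nbg v)^T:\nbg w$, and $(\nbg v\,\nu\nu^T):\nbg w$; the last identification rests on the observation that $\sum_{i,j,k}\nu_k\nu_i\,\D_j v_k\,\D_j w_i$ equals the inner product $(\nbg v\,\nu)\cdot(\nbg w\,\nu)$, which is precisely the double contraction of $\nbg w$ with the rank-one tensor $\nbg v\,\nu\nu^T$.

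The main obstacle is purely administrative: one must keep straight which index of each gradient matrix is the differentiation index and which is the component index, and then combine the symmetry of the Weingarten map with the column-wise divergence convention to align the two sides. Beyond this bookkeeping no additional geometric input is needed; the arguments are direct substitutions into \eqref{eqA:interchange formula - basic}--\eqref{eqA:interchange formula - mat-D}.
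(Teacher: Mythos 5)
Your proposal is correct and follows essentially the same route as the paper's proof: both derive all four identities by componentwise substitution of \eqref{eqA:interchange formula - basic} and \eqref{eqA:interchange formula - mat-D}, using the symmetry of $A$ and the column-wise divergence convention to repackage the sums, with the third identity's leading term arising as $\nbg\cdot(\nbg w)^T$ exactly as in the paper (which is also how it is used in \eqref{eq:three identities - H and nu-aniso}). Your index identifications for the $(\nbg v\,\nu\nu^T):\nbg w$ term check out, so no gap remains.
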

\begin{proof}
(i) The first identity follows directly from \eqref{eqA:interchange formula - basic}.

(ii) The second identity follows from \eqref{eqA:interchange formula - mat-D}.

(iii) For the third identity we use \eqref{eqA:interchange formula - basic} and the fact that $A = \nbg \nu$ is a symmetric matrix, combining these gives
\begin{align*}
	\nbg \big( \nbg \cdot w \big) 
	= &\ D_i D_k w_k \\
	= &\ D_k D_i w_k + A_{kj} D_j w_k \nu_i - A_{ij} D_j w_k \nu_k \\
	= &\ D_k D_i w_k + A_{jk} D_j w_k \nu_i - A_{ji} D_j w_k \nu_k \\
	= &\ \nbg \cdot (\nbg w)^T + (A : \nbg w) \nu - A (\nbg w \nu) .
\end{align*}

(iv) For the fourth identity we again use \eqref{eqA:interchange formula - mat-D} and the definition of surface divergence, which together give
\begin{align*}
	\mat \big( \nbg \cdot w \big) 
	= &\ \mat \big( D_i w_i \big) \\
	= &\ \D_i (\mat w_i) - \big( \D_i v_j - \nu_k \nu_i \D_j v_k \big) \D_j w_i \\
	= &\ \nbg \cdot ( \mat w ) - \big( (\nbg v)^T  - \nbg v \nu \nu^T  \big) : \nbg w .
\end{align*}
\qed
\end{proof}


\bibliographystyle{alpha}
\bibliography{evolving_surface_literature}

\end{document}